\def\BState{\State\hskip-\ALG@thistlm}
\newtheorem{thm}{Theorem}[section]
\newtheorem{prop}[thm]{Proposition}
\newtheorem{cor}[thm]{Corollary}
\newtheorem{lem}[thm]{Lemma}
\newtheorem{conj}[thm]{Conjecture}
\newtheorem*{claim}{Claim}
\newtheorem{mainthm}{}
\newtheorem*{mainthmB}{Main Theorem B}
\newtheorem*{conv}{Convention}
\theoremstyle{definition}
\newtheorem{defn}[thm]{Definition}
\theoremstyle{remark}
\newtheorem*{quest}{Question}
\newtheorem*{rem}{Remark}
\newtheorem*{examples}{Examples}
\newcommand{\isom}{\mbox{Isom}} 
\newcommand{\pGf}{\partial_\lambda{G}}
\newcommand{\pX}{\partial{\mathrm X}}
\newcommand{\pXf}{\partial_\lambda{X}}
\newcommand{\PS}[1]{\mathcal P_{#1}(s,o)}
\newcommand{\e}[1]{\omega(#1)}
\newcommand{\ec}[1]{\omega^c(#1)}
\newcommand{\ax}{\mathrm{Ax}}
\newcommand{\diam }[1]{{\textbf{diam}\big(#1\big)}}
\newcommand{\proj}{\textbf{d}^\pi}
\newcommand{\len }{\ell}
\newtheoremstyle{query}%
{}{}
{\color{red}}
{}
{\sffamily\bfseries}{:}{12pt}
{}
\theoremstyle{query}
\begin{document}

\title[Genericity of contracting elements in groups]{Genericity of contracting elements in groups}

\author{Wen-yuan Yang}

\address{Beijing International Center for Mathematical Research (BICMR), Beijing University, No. 5 Yiheyuan Road, Haidian District, Beijing, China}
 
\email{yabziz@gmail.com}
\thanks{}


\subjclass[2000]{Primary 20F65, 20F67}

\date{19 July, 2017}

\dedicatory{}

\keywords{contracting elements, critical exponent, genericity, purely exponential growth, growth tightness}

\begin{abstract}
In this paper,   we establish that, for statistically convex-cocompact   actions, contracting elements are exponentially generic in counting measure.   Among others, the following exponential genericity results are obtained as corollaries for the set of hyperbolic elements in relatively hyperbolic groups, the set of rank-1 elements in CAT(0) groups, and the set of pseudo-Anosov elements  in mapping class groups.
 
Regarding a proper action, the set of non-contracting elements is proven to be growth-negligible. In particular, for mapping class groups,   the set of  pseudo-Anosov elements is generic  in a sufficiently large subgroup,  provided that the subgroup has purely exponential growth.  
By Roblin's work, we obtain that the set of hyperbolic elements  is generic in any discrete group action on CAT(-1) space with finite BMS measure.   

Applications to the number of conjugacy classes of non-contracting elements are given for non-rank-1 geodesics in CAT(0) groups with rank-1 elements.
\end{abstract}

\maketitle

\setcounter{tocdepth}{1} \tableofcontents

\section{Introduction}
\subsection{Main results on genericity}\label{SSgenericity}

In recent years, the notion of a contracting element is receiving a great deal of interests in studying various classes of groups with negative curvature. The prototype of this notion is  a hyperbolic isometry on hyperbolic spaces, but more interesting examples are furnished by the following:
\begin{itemize}
\item
hyperbolic elements in relatively hyperbolic groups, cf. \cite{GePo4}, \cite{GePo2};
\item
rank-1 elements in CAT(0) groups, cf. \cite{Ballmann}, \cite{BF2};
\item
certain infinite order elements in graphical small cancellation groups, cf. \cite{ACGH};
\item
pseudo-Anosov elements in mapping class groups, cf. \cite{Minsky}.
\end{itemize}

Usually, the existence of a contracting element represents a   situation where a certain negative curvature along geodesics exists in ambient spaces although with which non-negatively curved parts could coexist. One of the findings of the present study is that under natural assumptions, the negatively curved portion dominates the remaining part, once one contracting element is supplied. This fits in the rapidly developping  research scheme where the statistical and random properties are studied in counting measure (compared with harmonic measure in random walk). To be precise, we are aiming to address the following question in the present paper:  
\begin{quest}
Suppose that a countable group $G$ admits a proper and  isometric action on a proper 
geodesic metric space $(\mathrm Y, d)$ with a contracting element. Fix a basepoint $o\in \mathrm Y$. Denote $$N(o, n):=\{g\in G: d(o, go)\le n\}.$$
Is the set of contracting elements  \textit{generic} in counting measure, i.e.   
$$
\frac{\sharp \{g\in N(o, n): g \text{ is contracting}\}}{\sharp N(o, n)}\to 1,
$$
as $n\to \infty$? It is called \textit{exponentially generic} if the rate of convergence happen exponentially fast.
\end{quest}

Let us clarify the question by introducing a few more notions. The \textit{critical exponent} $\e \Gamma$ for a {subset} $\Gamma \subset G$ is defined by
$$
\e \Gamma = \limsup\limits_{n \to \infty} \frac{\log \sharp(N(o, n)\cap \Gamma)}{n},
$$
which is independent of the choice of $o \in \mathrm Y$.  A subset $X$ in $G$ is called \textit{growth-tight} if $\e X<\e G$;   \textit{growth-negligible} if $\sharp X  = o\big(\exp(\e G n)\big)$, where $o$ is the Landau little-o notation.
    
Assuming that $G$ has the following \textit{purely exponential growth} property:
$$
\sharp N(o, n) \asymp \exp(\e G n),
$$
the above question is rephrased as saying that whether the set of non-contracting elements is  growth-negligible/growth-tight.

 In \cite{YANG10}, we investigated the asymptotic geometry of  a class of actions called \textit{statistically convex-cocompact actions} (SCC) (cf. \textsection \ref{SSSCC}). Among other things, we proved that SCC actions have purely exponential growth. A class of \textit{barrier-free} sets were introduced and proved to be growth-tight for SCC actions, and growth-negligible for a general proper action.  These results therefore constitute the basis for the present study.

 The group $G$ is always assumed to be  \textit{non-elementary}: there is no finite-index cyclic subgroup.    The main theorem of this paper is the following.

\begin{mainthm}\label{GenericThm}
Suppose that a non-elementary group $G$ admits a proper (resp. SCC) action on a geodesic metric space $(\mathrm Y, d)$ with a contracting element. Then the set of non-contracting elements in $G$ has growth-negligible (resp. growth-tight). 

In particular, for SCC actions, contracting elements are exponentially generic.
\end{mainthm}

The distinction between SCC actions and proper actions is subtle in this study, in that for proper actions even with purely exponential growth, our methods only allow to obtain the genericity of contracting elements, while SCC actions get the exponential genericity in one shot. It is a natural problem to determine when the exponential genericity holds as well for a proper action.

Consider first the applications to the weak form of genericity, before turning to the exponential genericity.  
\begin{cor}
Assume that a non-elementary group $G$ admits a proper  action on a geodesic metric space $(\mathrm Y, d)$ with a contracting element. If $G$ has purely exponential growth, then the set of contracting elements is generic.
\end{cor}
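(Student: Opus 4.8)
The plan is to obtain this corollary directly from \ref{GenericThm}, applied to the given proper action, together with the purely exponential growth hypothesis; all of the genuine geometric content is already packed into the Main Theorem, so what remains is essentially a one-line counting argument.

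First I would set $X := \{g \in G : g \text{ is not contracting}\}$. Since the action is proper with a contracting element and $G$ is non-elementary, the proper-action case of \ref{GenericThm} says that $X$ is growth-negligible, i.e.
$$
\sharp\big(N(o, n) \cap X\big) = o\big(\exp(\e{G}\, n)\big) \qquad (n \to \infty).
$$
Next I would unwind purely exponential growth: there is a constant $C \ge 1$ with $C^{-1}\exp(\e{G}\, n) \le \sharp N(o, n) \le C\exp(\e{G}\, n)$ for all sufficiently large $n$. Then
$$
\frac{\sharp\{g \in N(o, n) : g \text{ is contracting}\}}{\sharp N(o, n)} = 1 - \frac{\sharp\big(N(o, n) \cap X\big)}{\sharp N(o, n)},
$$
and combining the two estimates,
$$
\frac{\sharp\big(N(o, n) \cap X\big)}{\sharp N(o, n)} \le C\cdot\frac{\sharp\big(N(o, n) \cap X\big)}{\exp(\e{G}\, n)} = C\cdot o(1) = o(1),
$$
so the proportion of contracting elements in $N(o, n)$ tends to $1$, which is precisely genericity. (One uses here that $\e{G} > 0$, which holds for a non-elementary group acting properly with a contracting element and is in any case implicit in purely exponential growth being a nontrivial assumption.)

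I do not expect any serious obstacle in the corollary itself, since every difficulty has been absorbed into \ref{GenericThm}. The one point worth emphasizing is that for a general proper action \ref{GenericThm} only delivers growth-negligibility of the non-contracting elements --- an absolute bound $o(\exp(\e{G}\, n))$ carrying no information about the rate --- rather than growth-tightness; this is exactly why purely exponential growth must be assumed separately, as it is the hypothesis that converts that absolute bound into a statement about the \emph{ratio} $\sharp(N(o,n)\cap X)/\sharp N(o,n)$, and also why one cannot upgrade the conclusion to \emph{exponential} genericity under these hypotheses alone (in contrast to the SCC case, where \ref{GenericThm} already gives growth-tightness, purely exponential growth is automatic, and exponential genericity follows in one shot).
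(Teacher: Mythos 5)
Your proof is correct and follows exactly the route the paper intends: apply the proper-action case of the Main Theorem to get growth-negligibility of the non-contracting set, then use the two-sided bound $\sharp N(o,n)\asymp \exp(\e G n)$ from purely exponential growth to convert the $o(\exp(\e G n))$ estimate into a vanishing ratio. The paper treats this as immediate from the Main Theorem (it even frames the genericity question this way in the introduction), so there is nothing to add.
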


We now explain two implications in some specific classes of groups. In the context of mapping class groups, a sufficiently large subgroup was first studied  by McCarthy and Papadopoulos \cite{McPapa}, as an analog of non-elementary subgroups of Kleinian groups. By definition, a \textit{sufficiently large} subgroup is  one with at least two independent pseudo-Anosov elements. We refer the reader to \cite{McPapa} for a detailed discussion. The interesting examples include convex-cocompact  subgroups  \cite{FarbMosher}, the handlebody group, among many others. The following result reduces the genericity question to the problem whether the subgroup has purely exponential growth.

\begin{thm}[$\mathbb{Mod}$]\label{ModGeneric}
In mapping class groups, the set of non-pseudo-Anosov elements in a sufficiently large subgroup $\Gamma$ has  growth-negligible. In particular,  if $\Gamma$ has purely exponential growth, then  the set of pseudo-Anosov elements is generic. 
\end{thm}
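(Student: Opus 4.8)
The plan is to deduce this from the Main Theorem by producing a suitable proper action of $\Gamma$ with a contracting element. Let $S$ be the underlying surface and let $\Gamma$ act on the Teichm\"uller space $\mathcal{T}(S)$, equipped with the Teichm\"uller metric, via the restriction of the standard action of the mapping class group $\mathrm{Mod}(S)$. Since $\mathcal{T}(S)$ is a complete, locally compact, geodesic metric space, it is a proper geodesic metric space, and $\mathrm{Mod}(S)$, hence $\Gamma$, acts on it properly by isometries. So the framework of the Main Theorem applies as soon as $\Gamma$ is non-elementary and contains a contracting element.

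First I would verify these two conditions. Being sufficiently large, $\Gamma$ contains two independent pseudo-Anosov mapping classes; suitable powers of them generate a free subgroup of rank two, so $\Gamma$ has no finite-index cyclic subgroup and is non-elementary. Next, take any pseudo-Anosov $f\in\Gamma$: its invariant foliations are uniquely ergodic, so its Teichm\"uller axis is cobounded, hence contained in the $\epsilon$-thick part of $\mathcal{T}(S)$ for some $\epsilon>0$; by Minsky's theorem that geodesics in the thick part are strongly contracting \cite{Minsky}, this axis is contracting, so $f$ is a contracting element for the action $\Gamma\act\mathcal{T}(S)$. The Main Theorem, in its proper-action form, then yields that the set of non-contracting elements of $\Gamma$ is growth-negligible.

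It then remains to show that this set contains all non-pseudo-Anosov elements of $\Gamma$, equivalently that every contracting element of $\Gamma$ for the action on $\mathcal{T}(S)$ is pseudo-Anosov. By Thurston's classification, a non-pseudo-Anosov $g\in\Gamma$ is either periodic, in which case $g$ has zero stable translation length and is not even loxodromic, or reducible, fixing an essential multicurve $\sigma$. In the reducible case, Minsky's product regions theorem shows that the orbit map $n\mapsto g^{n}o$ stays within bounded distance of a product region attached to $\sigma$, on which the Teichm\"uller metric is coarsely the $\ell^{\infty}$-product of $\mathcal{T}(S\setminus\sigma)$ with the Fenchel--Nielsen factors of $\sigma$; inside such a region one can displace balls transversally to the orbit without changing their closest-point projections, so the orbit is not a contracting quasi-geodesic. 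Hence $g$ is non-contracting in either case, and since a subset of a growth-negligible set is growth-negligible, the set of non-pseudo-Anosov elements of $\Gamma$ is growth-negligible. Moreover, if $\Gamma$ has purely exponential growth then $\sharp N(o,n)\asymp\exp(\e\Gamma n)$, so $\sharp\{g\in N(o,n): g\text{ is not pseudo-Anosov}\}=o(\exp(\e\Gamma n))=o(\sharp N(o,n))$, whence the proportion of pseudo-Anosov elements in $N(o,n)$ tends to $1$; that is, pseudo-Anosov elements are generic.

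The one step I expect to be genuinely delicate is the exclusion of contracting reducible elements. An infinite-order reducible mapping class carrying a pseudo-Anosov component does translate loxodromically along an honest quasi-geodesic axis, so it cannot be dismissed by any soft coarse-geometric argument; the point must be that the product-region geometry of $\mathcal{T}(S)$ forces such an axis to have unbounded subsurface projection to an annulus about a reducing curve, and hence to fail the contraction property. Everything else is a routine verification of the hypotheses of the Main Theorem together with the elementary bookkeeping relating \emph{growth-negligible} to \emph{generic}.
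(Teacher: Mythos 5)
Your proposal follows essentially the same route as the paper: restrict the $\mathrm{Mod}(S)$-action on Teichm\"uller space to $\Gamma$, observe that this action is proper and that any pseudo-Anosov in $\Gamma$ is a contracting element (its axis lies in a thick part, which is contracting by Minsky), invoke the proper-action half of the Main Theorem to get that non-contracting elements are growth-negligible, and finally check that every contracting element is pseudo-Anosov. The only place you and the paper diverge is that last step. The paper disposes of it with a one-line algebraic remark: for a contracting element $h$ the cyclic group $\langle h\rangle$ has finite index in its centralizer (since the centralizer sits inside $E(h)$), whereas an infinite-order reducible mapping class commutes with a Dehn twist about a reducing curve and so has $\langle h\rangle$ of infinite index in its centralizer. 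Your geometric route through Minsky's product regions reaches the same conclusion but is less clean and, as written, has a small imprecision: for a general basepoint $o$ the orbit $n\mapsto g^{n}o$ of a reducible $g$ does \emph{not} stay in a product region (the orbit points are exactly as thick as $o$ is); you must first move $o$ into the $\sigma$-thin part and then invoke the fact that the contracting property is invariant under finite Hausdorff distance (Proposition \ref{Contractions}(\ref{nbhd})), after which the sup-metric product geometry does defeat contraction as you indicate. You should also note that reducible elements with no pseudo-Anosov component (multitwists) fail to be contracting for the even simpler reason that $n\mapsto g^no$ is not a quasi-isometric embedding, which is part of the paper's definition of a contracting element. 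With these repairs your argument is correct; the paper's centralizer argument simply buys a shorter, purely algebraic verification that applies uniformly to any proper action of $\mathrm{Mod}(S)$.
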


An obvious instance with purely exponential growth is a subclass of sufficiently large subgroups,  whose action on $\mathrm Y$ is itself SCC. This was studied in \cite{YANG10} as SCC subgroups, a dynamical generalization of convex-cocompact subgroups. We emphasize that there  exists indeed non-convex-cocompact non-free and free subgroups admiting SCC actions on Teichm\"{u}ller space. Hence, for these groups, the set of contracting elements is (exponentially) generic.  

In terms of Theorem \ref{ModGeneric}, it would be desirable to investigate which sufficiently large subgroups have  
purely exponential growth. The similar problem has been completely answered in the setting of Riemannian manifolds with variable negative curvature by work of T. Robin \cite{Roblin}. In fact, his results were obtained in a more general setting. For any proper action on a CAT(-1) space, Roblin showed that the growth of the action is of purely exponential if and only if the corresponding Bowen-Margulis-Sullivan measure is finite on the geodesic flow.  From this, we obtain the following application to discrete groups on a CAT(-1) space.
\begin{thm}[$\mathbb{CAT(-1)}$]\label{CAT1Generic}
Suppose that $G$ acts properly on a CAT(-1) space such that the BMS measure is finite. Then the set of hyperbolic elements is generic. 
\end{thm}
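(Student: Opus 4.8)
The plan is to deduce this as a direct instance of the Corollary following Main Theorem~\ref{GenericThm} --- the one asserting that a proper action with a contracting element and purely exponential growth has generic contracting elements. Writing $X$ for the CAT(-1) space on which $G$ acts, the task then splits into three checks: (i) the action admits a contracting element; (ii) finiteness of the BMS measure upgrades to purely exponential growth; and (iii) in the CAT(-1) setting ``contracting'' and ``hyperbolic'' pick out the same subset of $G$. Once these are in place the cited Corollary gives the statement verbatim, $\sharp N(o,n)$ being finite for each $n$ by properness.

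For (i) and (iii): a CAT(-1) space is Gromov hyperbolic, and in a Gromov hyperbolic space every geodesic --- indeed every quasigeodesic --- is contracting. Hence an infinite-order isometry of $X$ is a contracting element precisely when it is loxodromic, i.e. hyperbolic in the usual sense, its axis being a contracting quasigeodesic; elliptic (torsion) and parabolic isometries are neither contracting nor hyperbolic. Thus the set of contracting elements of $G$ and the set of hyperbolic elements of $G$ coincide, which settles (iii). For (i) it remains to exhibit one hyperbolic element, and this is where non-elementarity (a standing assumption, and also a hypothesis of the Corollary) together with the finiteness of the BMS measure enters: a finite, non-trivial BMS measure makes the geodesic flow non-wandering on a set of positive measure, forcing the action of $G$ on $X$ to be non-elementary in the dynamical sense and in particular to contain a loxodromic isometry $h$; its axis is a contracting geodesic, so $G$ acts with a contracting element.

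For (ii): I would invoke Roblin's theorem \cite{Roblin}, by which a proper action on a CAT(-1) space has purely exponential orbit growth $\sharp N(o,n)\asymp\exp(\e G n)$ if and only if the Bowen--Margulis--Sullivan measure on the geodesic flow is finite; our hypothesis supplies exactly the latter. Here one should reconcile Roblin's normalization of ``purely exponential growth'' with the definition used in this paper (they agree), and record that $0<\e G<\infty$, which follows from $G$ being non-elementary. This verifies the second hypothesis of the Corollary, so it applies and yields that the set of contracting elements is generic in counting measure; by (iii) this is precisely the assertion that the set of hyperbolic elements is generic, proving Theorem~\ref{CAT1Generic}. The only genuinely non-formal points are the two bookkeeping matters flagged above --- aligning Roblin's hypotheses and vocabulary with ours, and the contracting $\Leftrightarrow$ loxodromic dictionary for isometries of a CAT(-1) space --- and neither is a real obstacle; the theorem is, in the end, a corollary of the machinery already assembled together with Roblin's equidistribution results.
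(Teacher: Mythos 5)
Your proposal matches the paper's own derivation: Theorem \ref{CAT1Generic} is obtained there exactly by combining Roblin's equivalence (finite BMS measure $\Leftrightarrow$ purely exponential growth for proper actions on CAT(-1) spaces) with the Corollary to \ref{GenericThm} for proper actions of purely exponential growth, after identifying contracting elements with hyperbolic (loxodromic) isometries in the Gromov-hyperbolic CAT(-1) setting. Your extra bookkeeping (existence of a loxodromic from non-elementarity and non-triviality of the BMS measure, finiteness of $\sharp N(o,n)$) only makes explicit what the paper leaves implicit, so the argument is correct and essentially the same.
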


As suggested in Riemannian case, it seems interesting to develop an analogue of Roblin's result for sufficiently large subgroups in mapping class groups. 
 
We are now turning to the class of SCC actions, which admits stronger consequence by \ref{GenericThm}.  This class of groups encompasses many interesting clases of groups, listed as in \cite{YANG10}. Again, we first  consider the instance of mapping class groups. They are known to act on Teichm\"{u}ller space by SCC actions a result of by Eskin, Mirzakhani and Rafi \cite[Theorem 1.7]{EMR}, as observed in \cite[Section 10]{ACTao}. This is the starting point of our approach to mapping class groups, and modulo this input, our   argument is completely general and is working for any SCC action on a metric space. 

 Using dynamical methods, Maher \cite{Maher} has proved the genericity of pseudo-Anosov elements for the Teichm\"{u}ller metric. \footnote{The author is indebted to I. Gehktman and S. Taylor for telling him of Maher's result at MSRI in Aug. 2016 when this manuscript was in its final stage of preparation.}    Strenghening this result, \ref{GenericThm} allows to obtain the exponential genericity of peusdo-Anosov elements.

\begin{thm}[$\mathbb{Mod}$]\label{ModGeneric2}
In mapping class groups, the set of peusdo-Anosov elements   is exponentially generic in Teichm\"{u}ller metric. More generally,  given any convex-cocompact subgroup $\Gamma$, the set of  pseudo-Anosov elements not conjugated into $\Gamma$ are exponentially generic.
\end{thm}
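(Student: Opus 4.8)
The plan is to read the first assertion off \ref{GenericThm} and to reduce the second to a growth-tightness estimate for the conjugates of $\Gamma$. The two inputs special to mapping class groups are: by Eskin--Mirzakhani--Rafi \cite{EMR}, as observed in \cite{ACTao}, the action of $G=\mathrm{Mod}(S)$ on Teichm\"{u}ller space $(\mathcal T(S),d)$ with the Teichm\"{u}ller metric is statistically convex-cocompact; and, by Minsky's contraction estimate \cite{Minsky}, an element of $G$ is contracting for this action precisely when it is pseudo-Anosov, with uniform contraction constants along all pseudo-Anosov axes. In particular a contracting element exists, so the SCC case of \ref{GenericThm} shows that the set of non-pseudo-Anosov elements is growth-tight, i.e.\ that pseudo-Anosov elements are exponentially generic --- this is the first assertion, which is the case $\Gamma=\{1\}$ of the second.

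For the general statement, fix a convex-cocompact subgroup $\Gamma\le G$ and put $E_\Gamma=\bigcup_{a\in G}a\Gamma a^{-1}$, the set of elements conjugate into $\Gamma$. The complement of $\{g:\ g\text{ pseudo-Anosov},\ g\notin E_\Gamma\}$ is contained in $\{g:\ g\text{ not pseudo-Anosov}\}\cup E_\Gamma$, and a finite union of growth-tight sets is growth-tight (the critical exponent of a union being the maximum of the critical exponents), so it suffices to prove $\e{E_\Gamma}<\e G$. I would establish this by a direct count over $G$-conjugacy classes. Since $\Gamma$ is convex-cocompact it is word-hyperbolic, every infinite-order element of $\Gamma$ is pseudo-Anosov, and the orbit $\Gamma o\subset\mathcal T(S)$ is quasiconvex (Farb--Mosher \cite{FarbMosher}); the finite-order elements of $E_\Gamma$ are treated the same way, with a fixed point of $\mathcal T(S)$ in place of an axis, and contribute at rate at most $\tfrac12\e G$, so we may restrict to $g\in E_\Gamma$ conjugate to a pseudo-Anosov $\psi$. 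Fixing a representative $\psi$ of each $G$-conjugacy class that meets $\Gamma$ and writing such a $g$ as $g=a\psi a^{-1}$, we have: the uniform contraction of pseudo-Anosov axes \cite{Minsky} gives $d(o,go)=2\,d\big(o,a\,\mathrm{Ax}(\psi)\big)+\len(\psi)+O(1)$, where $\len(\psi)$ is the translation length and the error is universal; the number of distinct translates $a\,\mathrm{Ax}(\psi)$ within distance $m$ of $o$ is at most $\exp\big((\e G+o(1))m\big)$, since each such translate equals $(ae)\,\mathrm{Ax}(\psi)$ for some $ae\in N(o,m+O(1))$ with $e$ stabilizing $\mathrm{Ax}(\psi)$; the number of $g\in[\psi]_G$ with a prescribed axis is at most a universal constant; and the number of $\Gamma$-conjugacy classes of translation length at most $t$ is at most $\exp\big((\e\Gamma+o(1))t\big)$, because quasiconvexity of $\Gamma o$ provides in each such class a representative $\gamma_0$ with $d(o,\gamma_0 o)\le\len(\gamma_0)+O(1)$. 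Summing over $\psi$ and over the value $t=\len(\psi)$ gives
\[
\e{E_\Gamma}\ \le\ \max\big\{\tfrac12\,\e G,\ \e\Gamma\big\}.
\]
Since $\e G>0$ and $\e\Gamma<\e G$ --- the latter because $\Gamma$ is an infinite-index convex-cocompact, in particular statistically convex-cocompact, subgroup whose orbit is not quasidense (cf.\ \cite{YANG10}) --- this yields $\e{E_\Gamma}<\e G$ and completes the proof.

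The step I expect to be the genuine obstacle is the chain of counting estimates above: one must choose, for each $g\in E_\Gamma$ with $d(o,go)\le n$, a decomposition $g=a\psi a^{-1}$ with $a$ a near-optimal coset representative so that a geodesic $[o,go]$ splits as an approach to the translated axis $a\,\mathrm{Ax}(\psi)$, a long stretch fellow-travelling it, and a departure, all with additive errors independent of $n$, and one must keep the overcounting of the pairs $(a,\psi)$ sub-exponential; the uniformity across all pseudo-Anosov elements furnished by \cite{Minsky} is exactly what keeps those errors universal. As in \ref{GenericThm}, it is the SCC hypothesis --- through purely exponential growth --- that upgrades plain genericity to exponential genericity in the conclusion.
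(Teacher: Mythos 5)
Your first assertion is handled exactly as in the paper: the Teichm\"{u}ller action is SCC by Eskin--Mirzakhani--Rafi, contracting elements are precisely the pseudo-Anosov ones, and \ref{GenericThm} applies. For the second assertion, however, you take a genuinely different route. The paper deduces it from Lemma \ref{wqcGTight} (a convex-cocompact, hence weakly quasi-convex, infinite-index subgroup $\Gamma$ is contained in a barrier-free set $\mathcal V_{\epsilon,M,f}$) together with Theorem \ref{tightNC} (the set $\mathcal{BF}$ of elements with a conjugacy representative in a barrier-free set is growth-tight); this is Lemma \ref{FMtight2}, and Theorem \ref{tightNC} is in turn the heavy theorem whose proof occupies \textsection\textsection\ref{Section5}--\ref{Section7} via the Bestvina--Bromberg--Fujiwara projection complex. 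You instead re-derive the needed special case by hand: your identity $d(o,go)=2d(o,a\,\mathrm{Ax}(\psi))+\len(\psi)+O(1)$ is precisely the ``almost geodesic form'' of Proposition \ref{almostgeodform0} in the situation where $\hat g$ lies in $\Gamma$ rather than in a general barrier-free set, and your final summation over $t=\len(\psi)$ is structurally identical to the sum (\ref{SUMEQ}). What your approach buys is a much more elementary argument that avoids the projection-complex machinery entirely, at the price of working only for conjugates of a convex-cocompact subgroup; the paper's route proves the far more general Theorem \ref{tightNC} once and reads this statement off as a corollary.

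Two caveats. First, your appeal to Minsky for ``uniform contraction constants along all pseudo-Anosov axes'' is false as stated: Minsky's estimate is for geodesics in the $\epsilon$-thick part, and the contraction constant of a pseudo-Anosov axis degenerates as the axis enters the thin part, so there is no uniformity over all of $\mathrm{Mod}(S)$. Your counting argument survives only because the axes you actually use are translates of axes of elements of the fixed convex-cocompact $\Gamma$, whose orbit stays in a definite thick part; the uniformity must be sourced from convex cocompactness of $\Gamma$ (Farb--Mosher), not from Minsky applied to arbitrary pseudo-Anosov elements. Second, the chain of estimates you yourself flag as the obstacle --- the universal additive error in the decomposition, the bounded count of conjugates with a prescribed axis, and the identification of $d(x,gx)$ with $\len(\psi)$ up to a constant independent of $\psi\in\Gamma$ --- is asserted rather than proved, and it is exactly the content that Proposition \ref{almostgeodform0} supplies in general; these steps are believable for a convex-cocompact $\Gamma$ but would need to be written out. (Your separate treatment of finite-order elements of $E_\Gamma$ is unnecessary --- they are non-pseudo-Anosov and already absorbed by the first assertion --- which is fortunate, since the claimed lower bound $d(o,go)\gtrsim 2d(o,ap)$ for an elliptic conjugate is not justified.)
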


After the first version was posted in Arxiv \cite{YANG10}, we learnt that this result was independently obtained by Spencer Dowdall in joint work with Howard  Masur in a conference ``Geometry of mapping class groups and Out($F_n$)'', October 25 - October 28, 2016.

\begin{rem}
In fact, Maher proved in \cite{Maher} that the set of  elements  (containing possibly pseudo-Anosov ones) with bounded translation length on curve graphs is negligible. Our last statement strenghens it as well to the exponential negligiblilty, and even more: since all conjugates of a convex-cocompact subgroup can have arbitrary large  translation length on curve graphs. See Lemma \ref{FMtight2}.
\end{rem}


Our study also shelds some light on a conjecture of Farb \cite{Farb2}, which predicts genericity for a class of combinatorial metrics (in contract with Teichm\"{u}ller metric in Maher's theorem).  
\begin{conj} 
The set of pseudo-Anosov elements is generic with respect to the word metric. 
\end{conj}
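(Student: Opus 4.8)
The plan is to realize the mapping class group $G=\mathrm{Mod}(\Sigma)$ acting on its own Cayley graph $\mathscr G(G,S)$ with respect to a word metric $\dx$ (for a finite generating set $S$) as an instance of the situation of \ref{GenericThm}, and then apply its corollary. Since $G$ acts on $\mathscr G(G,S)$ properly (indeed geometrically), it suffices to supply the two remaining inputs: (1) an element of $G$ that is contracting with respect to $\dx$, and (2) the purely exponential growth estimate $\sharp N(o,n)\asymp\exp(\e G n)$ for the word balls $N(o,n)=\{g:\dx(o,go)\le n\}$. Granting both, the corollary to \ref{GenericThm} gives that the set of $\dx$-contracting elements is generic, and a final identification of that set with the set of pseudo-Anosov elements finishes the argument.

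For input (1), the natural candidate is any pseudo-Anosov $\phi\in G$. The cyclic subgroup $\langle\phi\rangle$ is undistorted in $(G,\dx)$ by Farb--Lubotzky--Minsky, and postcomposing the orbit map $\mathscr G(G,S)\to\mathcal C(\Sigma)$ with the Masur--Minsky picture (on which $\phi$ acts loxodromically with contracting quasi-axis) one promotes the $\langle\phi\rangle$-axis in $\mathscr G(G,S)$ to a genuinely contracting quasigeodesic; equivalently, pseudo-Anosov elements are Morse in the word metric, which is by now standard from the hierarchically hyperbolic structure on $G$. The same bookkeeping with subsurface projections should conversely show that a $\dx$-contracting element cannot be reducible or finite order, since Dehn twisting in a fixed or complementary subsurface produces a flat direction transverse to any candidate axis; so the set of $\dx$-contracting elements coincides with the set of pseudo-Anosovs, and the identification step in the plan is automatic.

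The genuine obstacle is input (2): purely exponential growth of $G$ with respect to the word metric. For the Teichm\"uller metric, the asymptotics of word balls are replaced by the lattice-point count of Eskin--Mirzakhani--Rafi \cite{EMR}, which is exactly what feeds SCC and purely exponential growth in \cite{YANG10}; there is no analogous count known for the word metric, and it is not known — perhaps not even true — that the action on $\mathscr G(G,S)$ is SCC, so the internal machinery of \cite{YANG10} does not apply off the shelf. To run the plan one would need either to establish an SCC-type convexity statement for the word metric (which would also upgrade genericity to exponential genericity via the SCC case of \ref{GenericThm}), or to prove $\sharp N(o,n)\asymp\exp(\e G n)$ directly, e.g. via an automatic or biautomatic structure together with a Perron--Frobenius analysis of the associated growth series. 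I expect the subsurface-projection arguments and the Morseness of pseudo-Anosovs to be routine-to-folklore, and the purely exponential growth of the word metric to be the real barrier — which is precisely why Farb's conjecture is stated here as a conjecture rather than derived as a corollary.
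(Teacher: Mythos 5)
There is a genuine gap, and it sits exactly where you declared the step ``routine-to-folklore''. First, be aware that the statement you are addressing is presented in the paper as an open conjecture (Farb's conjecture): the paper does not prove it, but reduces it via \ref{GenericThm} to Conjecture \ref{FarbConj}, namely that mapping class groups act on their Cayley graphs with a contracting element in the sense of Definition \ref{ContrDefn}. Your input (1) is precisely this open problem. The property you invoke --- that pseudo-Anosov elements are Morse (stable) in the word metric, which does follow from subsurface-projection/hierarchical arguments --- is strictly weaker than the contraction property used throughout the paper: Definition \ref{ContrDefn} requires that every geodesic staying at distance at least $C$ from the axis have nearest-point projection of diameter at most $C$, uniformly, and it is this strong form that feeds the admissible-path machinery, the extension construction, and the growth-tightness theorem \ref{GrowthTightThm}. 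No argument is known that promotes Morseness of a pseudo-Anosov axis in a Cayley graph to strong contraction for an arbitrary finite generating set, and your sketch supplies none: composing with the orbit map to the curve complex controls curve-complex projections, not nearest-point projections in the word metric onto the axis. This is exactly why the paper leaves \ref{FarbConj} as a conjecture and why Farb's conjecture remains open rather than being derived as a corollary.

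Second, the step you identify as the ``real barrier'' is in fact free. Any proper and cocompact action is SCC (Example (1) following Definition \ref{StatConvex}), so the action of the mapping class group on its Cayley graph is SCC --- the paper states this explicitly; your assertion that this is ``not known, perhaps not even true'' is incorrect. Consequently, if input (1) were available, the SCC case of \ref{GenericThm} would at once give growth-tightness of the set of non-contracting elements, hence exponential genericity, with no separate need to establish purely exponential growth (which in any case follows from SCC together with a contracting element by \cite{YANG10}). So your proposal inverts the actual difficulty: the SCC/growth side is automatic, and the sole missing ingredient is a strongly contracting element for the word metric, which your argument assumes rather than proves. (Your closing identification that any contracting element must be pseudo-Anosov is fine and agrees with the paper's remark via centralizers, but it is downstream of the unproven step.)
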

Since any group acts properly and cocompactly on its Cayley graph, the action of mapping class groups on Cayley graphs is SCC.  Farb's conjecture can be reduced by \ref{GenericThm} to the following: 
\begin{conj}\label{FarbConj}
Mapping class groups act on their Cayley graphs with contracting elements. 
\end{conj}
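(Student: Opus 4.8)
One natural line of attack on Conjecture~\ref{FarbConj} runs as follows. First, note that a contracting element for the action of $\mathrm{Mod}$ on its Cayley graph is necessarily pseudo-Anosov: an infinite-order reducible $g$ fixes a curve $c$ in its canonical reduction system, hence commutes with the Dehn twist $T_c$, so $\langle g\rangle$ sits inside an undistorted subgroup isomorphic to $\mathbb Z^2$; and a straight line in an undistorted $\mathbb Z^2\le G$ is never a contracting subset of $G$, since the $\mathbb Z^2$-ball of radius $R$ projects onto a segment of length $\approx R$. Thus the plan is to exhibit a pseudo-Anosov $g$ for which the quasi-axis $A=\{g^{n}\}_{n\in\mathbb Z}$ --- a word quasigeodesic, because $\langle g\rangle$ is undistorted --- is a contracting subset of $\mathrm{Mod}$ in the sense used throughout this paper. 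The inputs I would use are the Masur--Minsky distance formula $d_{\mathrm{Mod}}(x,y)\approx\sum_{W}[d_{W}(x,y)]_{k}$ (sum over subsurfaces $W$, where $[t]_{k}=t$ for $t\ge k$ and $0$ otherwise) and the Bounded Geodesic Image theorem.

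The first step, essentially known, is to verify that the axis has uniformly bounded subsurface projections: there is $M=M(g)$ with $\diam{\pi_{W}(A)}\le M$ for every proper subsurface $W\subsetneq S$. Since $g$ is pseudo-Anosov, the orbit image $\Phi(A)$ in the hyperbolic curve graph $\mathcal C(S)$ is a parametrized quasigeodesic with constants depending only on $g$; a vertex $\partial W$ lies within a fixed distance of $\Phi(A)$ only along a sub-interval of $\mathbb Z$ of uniformly bounded length $L_{0}$; consecutive axis points obey $d_{W}(g^{n}o,g^{n+1}o)=d_{g^{-n}W}(o,go)\le M_{0}$, where $M_{0}:=\max_{V}d_{V}(o,go)<\infty$ by the distance formula; and by Bounded Geodesic Image the axis points outside that sub-interval contribute only a universal amount to $\diam{\pi_{W}(A)}$. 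Summing yields $\diam{\pi_{W}(A)}\le L_{0}M_{0}+\mathrm{const}$, independently of $W$.

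The hard part is the second step: upgrading ``$A$ makes linear progress in $\mathcal C(S)$ and has uniformly bounded projections to all proper subsurfaces'' to ``$A$ is a contracting subset for the word metric''. Given $y,y'$ whose nearest-point projections to $A$ are far apart, one wants to force a vertex of a word geodesic $[y,y']$ uniformly close to $A$; the natural route is a Behrstock-inequality analysis of the subsurfaces ``seen'' by $A$ between the two projection points (the surface $S$, contributing linearly, together with roughly $d_{\mathcal C(S)}(\pi_{A}(y),\pi_{A}(y'))$ proper subsurfaces each contributing at most $M$), concluding that either $y$ or $y'$ already agrees with $A$ in those subsurfaces or $[y,y']$ is driven near $A$. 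The obstacle --- and the reason the statement is still only conjectured --- is that the Cayley graph of $\mathrm{Mod}$ is very far from hyperbolic: it contains quasi-isometrically embedded copies of $\mathbb Z^{2}$ coming from pairs of disjoint Dehn twists, so a priori a word geodesic could detour through a product region, remaining far from $A$ while its projection to $A$ sweeps out a long segment. Ruling this out amounts to showing that a pseudo-Anosov axis cannot coarsely fellow-travel any product region for long and that this precludes such detours; equivalently, one wants to strengthen the classical fact that $\langle g\rangle$ is hyperbolically embedded in $\mathrm{Mod}$ to the bounded-penetration property characterizing contracting subgroups. One expects this to hold for every, or at least every generic, pseudo-Anosov $g$, with $\mathrm{Mod}(S_{2})$ a natural first test case.
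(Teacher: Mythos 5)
The statement you are trying to prove is not a theorem of the paper at all: it is stated there as Conjecture \ref{FarbConj}, explicitly left open, and the paper offers no proof of it (it only observes, in the remark following the conjecture, that a contracting element for any proper $\mathrm{Mod}$-action must be pseudo-Anosov, and that the conjecture would imply the strong form of Farb's genericity conjecture via \ref{GenericThm}). So there is no argument in the paper to compare yours against, and your proposal cannot be judged "correct" unless it actually closes the problem.

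It does not, and you say so yourself. Your first step (a pseudo-Anosov axis makes linear progress in the curve graph and has uniformly bounded subsurface projections) is standard and fine, and your preliminary observation that a contracting element must be pseudo-Anosov is consistent with the paper's remark. But the entire content of the conjecture is your "second step": passing from bounded subsurface projections plus linear progress in $\mathcal C(S)$ to the \emph{uniform} contracting property of Definition \ref{ContrDefn} for the word metric. A distance-formula/Behrstock-inequality analysis of this kind only yields weaker conclusions (e.g.\ that the axis is Morse, or contracting in a sublinear or ``weak'' sense); it does not by itself rule out a word geodesic detouring through a quasi-flat coming from commuting Dehn twists while its projection to the axis sweeps out an arbitrarily long segment. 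That is precisely the obstruction you name, and no mechanism is supplied to overcome it. Note also that the answer is sensitive to the choice of generating set, which your outline does not engage with: ``strongly contracting'' in the sense of this paper is not a quasi-isometry invariant, so an argument that works for one Cayley graph need not work for another, and conversely a counterexample for one generating set would not refute the conjecture for all. In short, the proposal is a reasonable research plan, but the decisive step is missing, so the conjecture remains exactly as open as the paper leaves it.
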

\begin{rem} 
Note that a contracting element in any proper action of $\mathbb {Mod}$ on a geodesic metric space has to be pseudo-Anosov, since a reducible element  is of infinite index in its centralizer, while a contracting element is of finite index. So, if this conjecture is true, it will imply the stronger version of Farb's conjecture that pseudo-Anosov elements is exponentially generic.
\end{rem}

To give more results on genericity, we mention  some applications of \ref{GenericThm} to another two classes of relatively hyperbolic groups and CAT(0) groups with rank-1 elements.

The class of relatively hyperbolic groups, introduced by Gromov \cite{Gro} as     generalization of hyperbolic groups, has been developped by many people \cite{Farb}, \cite{Bow1}, \cite{Osin}, \cite{Dah}, \cite{DruSapir}, \cite{Ge1}  and so on. In last twenty years, their intensive study achieves a huge success and inspires as well many research works on other classes of groups with negative curvature. For instance, the recent work of Dahmani, Guirardel and Osin \cite{DGO} on hyperbolically embeded subgroups with applications to mapping class groups.

It is widely believed that hyperbolic elements are generic in a relatively hyperbolic group, which by defintion are infinite order elements not conjugated into maximal parabolic subgroups.  Our next result confirms this for the action on Cayley graphs with respect to word metric. 
\begin{thm}[$\mathbb{RelHyp}$] \label{RelHypGeneric}
The set of hyperbolic elements in a relatively hyperbolic group is exponentially generic with respect to the word metric. 
\end{thm}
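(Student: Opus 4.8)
The plan is to deduce this from \ref{GenericThm} applied to the action of $G$ on its Cayley graph $\mathscr{G}(G,S)$ with respect to a finite generating set $S$. Since $G$ acts on $\mathscr{G}(G,S)$ properly and cocompactly, this action is in particular SCC, so the hypotheses of the ``SCC'' part of \ref{GenericThm} are in force once a single contracting element is produced. I would recall the classical fact that a non-elementary relatively hyperbolic group contains a hyperbolic element $h$ (an infinite-order element not conjugate into any maximal parabolic subgroup), and that such $h$ acts on the word metric Cayley graph as a contracting isometry: its $\langle h\rangle$-orbit is a Morse, hence contracting, quasigeodesic. This is where one uses that $\mathscr{G}(G,S)$ is hyperbolic relative to the peripheral cosets together with the standard behaviour of loxodromic elements; see e.g. \cite{Osin}, \cite{DruSapir}, \cite{GePo2}. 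With this input, \ref{GenericThm} immediately yields that the set of non-contracting elements of $G$ is growth-tight, i.e. contracting elements are exponentially generic in $N(o,n)$ for the word metric.

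It then remains to compare the set of contracting elements with the set of hyperbolic elements, and for the statement it suffices to check the inclusion: every contracting element is hyperbolic. A finite-order element has bounded orbit and is trivially non-contracting. If $g$ has infinite order and is conjugate into a maximal parabolic subgroup $P$, then after conjugation $\langle g\rangle$ lies in $P$; since the peripheral coset $P$ is a non-contracting subset of $\mathscr{G}(G,S)$ — two points of $P$ are joined by geodesics staying uniformly close to $P$, so a contracting geodesic cannot make long excursions inside $P$ — the cyclic subgroup $\langle g\rangle$, and hence $g$, is non-contracting. Thus the set of non-hyperbolic elements is contained in the set of non-contracting elements, so its critical exponent is strictly smaller than $\e G$; equivalently, the exponentially generic set of contracting elements is contained in the set of hyperbolic elements, which is therefore exponentially generic as well. (One in fact has equality, since conversely every hyperbolic element is contracting for the word metric, but only the inclusion is needed here.)

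The main obstacle I anticipate is the second step: pinning down the relationship between contracting elements for the word metric and hyperbolic elements, in particular that parabolic elements fail to be contracting. This is intuitively clear because peripheral cosets behave like unbounded, non-contracting ``horoball/flat'' regions in $\mathscr{G}(G,S)$, but making it precise requires invoking the geometry of relatively hyperbolic Cayley graphs (geodesics between two points of a peripheral coset track that coset, so an element whose powers remain within bounded distance of a peripheral coset cannot have a contracting axis). Once this characterization is recorded, the theorem is a formal consequence of \ref{GenericThm} together with the existence of hyperbolic elements in non-elementary relatively hyperbolic groups.
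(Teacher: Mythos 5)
Your first step is fine and agrees with the paper: the cocompact action on the Cayley graph is SCC, a non-elementary relatively hyperbolic group has a contracting element, and \ref{GenericThm} makes the set of non-contracting elements growth-tight. The gap is in your second step, and it is exactly the pitfall the paper flags in the remark immediately following the theorem: \emph{contracting elements can be parabolic}, so the set of non-hyperbolic elements is in general \emph{not} contained in the set of non-contracting elements, and the theorem is not a formal consequence of \ref{GenericThm}. Your argument that a peripheral coset is ``a non-contracting subset'' because geodesics between its points track it is not valid --- that property is quasi-convexity, and quasi-convex subsets are very often contracting (every quasi-convex subset of a hyperbolic space is). Concretely, take $G=F_2\star \mathbb Z^2$ hyperbolic relative to $\{F_2,\mathbb Z^2\}$ (a free product is hyperbolic relative to its factors, as the paper notes). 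Any infinite-order element of the free factor $F_2$ is parabolic for this peripheral structure, yet its axis lies in the convex, contracting free factor and it is a contracting element of $G$ for the word metric. So the inclusion you rely on fails, and with it the deduction.

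The paper's proof handles the parabolic elements by a different mechanism: each maximal parabolic subgroup $P$ is (weakly) quasi-convex and of infinite index, so by Lemma \ref{wqcGTight} it is contained in a barrier-free set $\mathcal V_{\epsilon, M, f}$ for some contracting $f$; Theorem \ref{tightNC} then shows that the set of all elements \emph{conjugate} into $P$ is growth-tight, and finiteness of the number of conjugacy classes of maximal parabolics finishes the argument (finite-order elements are non-contracting and are already covered by \ref{GenericThm}). To repair your proof you would need to replace the false claim ``contracting $\Rightarrow$ hyperbolic'' by this barrier-free/growth-tightness argument for the conjugates of the peripheral subgroups.
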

\begin{rem}
Note that this is not a direct consequence  of   \ref{GenericThm}, since contracting elements might be parabolic. The proof indeed follows from a more general result in \textsection \ref{Section4}.

We remark that this same conclusion holds for the cusp-uniform action with parabolic gap property, or a more general property introduced by Dal'bo, Otal and Peign\'e \cite{DOP}. Since, under these properties, the corresponding action has purely exponential growth, see \cite{YANG8}. 
\end{rem}

It is well-known that  a free product  of any two groups (or generally,  a graph of groups with finite edge groups) is hyperbolic relative to factors, by an equivalent definition in \cite{Bow1}. We thus obtain the following corollary.
\begin{cor}[Free product]
The set of elements in a free product of two groups not conjugated into both factors is exponentially generic with respect to the word metric, if the free product is not elementary (i.e., $\ncong  \mathbb Z_2 \star \mathbb Z_2.$). 
\end{cor}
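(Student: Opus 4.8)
The plan is to recognize this corollary as a direct instance of Theorem~\ref{RelHypGeneric}. Write $G = A \star B$ with $A$ and $B$ nontrivial, and read the statement as asserting that the set of $g \in G$ conjugate into neither $A$ nor $B$ is exponentially generic. First I would recall, via Bowditch's equivalent characterization of relative hyperbolicity \cite{Bow1}, that $G$ is hyperbolic relative to $\mathcal P = \{A, B\}$, and that the maximal parabolic subgroups of this structure are exactly the conjugates $gAg^{-1}$ and $gBg^{-1}$ for $g \in G$. Thus ``conjugate into neither factor'' means precisely ``conjugate into no maximal parabolic subgroup''.

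Next I would match the remaining two hypotheses. For elementarity: it is classical that a free product of two nontrivial groups contains a nonabelian free subgroup unless it is isomorphic to $\mathbb Z_2 \star \mathbb Z_2 \cong D_\infty$, which contains $\mathbb Z$ as a subgroup of index $2$ and is therefore elementary in the sense of \textsection\ref{SSgenericity}. Hence the hypothesis ``$G$ is non-elementary'' is equivalent to ``$G \ncong \mathbb Z_2 \star \mathbb Z_2$'', matching the statement. For the description of the set: by the torsion theorem for free products (a consequence of the Kurosh subgroup theorem), every element of finite order in $A \star B$ is conjugate into $A$ or into $B$; consequently an element of $G$ is conjugate into no maximal parabolic subgroup if and only if it has infinite order and is conjugate into no maximal parabolic subgroup, i.e.\ if and only if it is a hyperbolic element of $G$ in the sense of \textsection\ref{Section4}. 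So the set appearing in the corollary coincides exactly with the set of hyperbolic elements of $G$.

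With these identifications, the corollary follows at once: applying Theorem~\ref{RelHypGeneric} to the non-elementary relatively hyperbolic group $G = A \star B$ shows that its set of hyperbolic elements, equivalently the set of elements conjugate into neither factor, is exponentially generic with respect to the word metric. The only steps that are not purely formal are the two classical facts used above (the free-product torsion theorem and the elementarity dichotomy for free products); neither presents a genuine obstacle, so the whole argument amounts to a bookkeeping reduction to Theorem~\ref{RelHypGeneric}.
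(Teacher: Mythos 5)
Your proposal is correct and is essentially the paper's own argument: the paper deduces the corollary immediately from Theorem~\ref{RelHypGeneric} after noting (via Bowditch's characterization) that a free product is hyperbolic relative to its factors. The extra bookkeeping you supply — that torsion elements are conjugate into a factor, so ``conjugate into neither factor'' coincides with ``hyperbolic'', and that non-elementarity is equivalent to $G\ncong \mathbb Z_2\star\mathbb Z_2$ — is exactly what the paper leaves implicit.
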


Lastly, we consider the implications for the class of CAT(0) groups with rank-1 elements, which admits a geometric (and thus SCC) action with a contracting element.  There are two important subclasses, right-angled Artin groups (RAAG) and right-angled Coxeter groups (RACG), which are recieving a great deal of interests in last years. For stating the next result, we shall  explain which RAAGs and RACGs contain contracting elements.

It is well-known that an RAAG acts properly and cocompactly on a non-positively curved  cube complex called the \textit{Salvetti complex}.  The defining graph of a RAAG is a join if and only if the RAAG is a direct product of non-trivial groups.  In \cite[Theorem 5.2]{BehC}, Behrstock and Charney proved that any subgroup of an  RAAG $G$   that is not conjugated into a \textit{join subgroup} (i.e., obtained from a join subgraph)  contains a  contracting element.  

 An RACG also acts properly and cocompactly on a CAT(0) cube complex called the \textit{Davis complex}. In \cite[Proposition 2.11]{BHS}, an RACG   of linear divergence was characterized    as virtually a direct product of groups. Hence, an RACG which is not virtually a direct product of non-trivial groups contains  a rank-1 element, for its existence  is equivalent to a superlinear divergence by \cite[Theorem 2.14]{ChaSul}.
 
By the above discussion, we have the following. 
 
\begin{thm}[$\mathbb{CAT_0^1}$] \label{CATGeneric}
The set of rank-1 elements in a CAT(0) group with a rank-1 element is exponentially generic with respect to the CAT(0) metric. This includes, in particular, the following two subclass of groups:
\begin{enumerate}
\item
The action on the Salvetti complex of right-angled Artin groups that are not direct products. As a consequence, any subgroup conjugated into a join subgroup is growth-tight. 
\item
The action on the Davis complex of right-angled Coxeter groups that are not virtually a direct product of non-trivial groups.
\end{enumerate} 
\end{thm}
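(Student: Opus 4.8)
The plan is to obtain the statement as an application of \ref{GenericThm} after two elementary translations. First, a geometric action (proper and cocompact) is in particular SCC, so the stronger conclusion of \ref{GenericThm} is available and delivers \emph{exponential} genericity. Second, for an isometric action on a CAT(0) space one has the equivalence: an infinite order element is contracting if and only if its axis is rank-1 (equivalently, Morse), by the work of Bestvina--Fujiwara \cite{BF2} and Charney--Sultan \cite{ChaSul}. Consequently, for a non-elementary CAT(0) group $G$ acting geometrically on a CAT(0) space, the set of contracting elements is exactly the set of rank-1 elements; so if $G$ contains a single rank-1 element, \ref{GenericThm} shows that the set of rank-1 elements is exponentially generic, which is the first assertion.

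It then remains, in each of the two families, only to exhibit one rank-1 element, and this is provided by known structural dichotomies. A RAAG $G$ acts geometrically on the CAT(0) universal cover of its Salvetti complex; by Behrstock--Charney \cite[Theorem 5.2]{BehC} every subgroup not conjugate into a join subgroup contains a contracting element, and taking this subgroup to be $G$ itself --- which is conjugate into a join subgroup exactly when its defining graph is a join, i.e. when $G$ is a direct product --- produces a rank-1 element whenever $G$ is not a direct product. For the ``as a consequence'' clause I would argue as follows: a contracting element is of finite index in its centralizer, whereas a nontrivial element $g$ of a join subgroup $A_{\Lambda_1}\times A_{\Lambda_2}$ (both $\Lambda_i$ nonempty, hence both factors infinite) has $\langle g\rangle$ of infinite index in $C_G(g)$, since $C_G(g)$ then contains either an infinite direct factor or a copy of $\mathbb Z^2$; thus no element of a join subgroup is contracting, and since being non-contracting is conjugation invariant, a subgroup conjugated into a join subgroup consists entirely of non-contracting elements. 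By \ref{GenericThm} the set of non-contracting elements is growth-tight, and growth-tightness passes to subsets because the critical exponent is monotone under inclusion; hence such a subgroup is growth-tight. Similarly, a RACG acts geometrically on its CAT(0) Davis complex; by \cite[Proposition 2.11]{BHS} an RACG of at most linear divergence is virtually a direct product, and by \cite[Theorem 2.14]{ChaSul} the existence of a rank-1 element is equivalent to superlinear divergence, so an RACG that is not virtually a direct product of nontrivial groups has a rank-1 element and the first assertion applies.

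I expect the main obstacle to lie not in any computation but in assembling the right black boxes and verifying that the two translations above are airtight: one must make sure that ``contracting'' in the sense governing \ref{GenericThm} coincides with ``rank-1'' for these particular geometric CAT(0) actions (so that the conclusion can be phrased intrinsically in terms of the CAT(0) metric), and that no contracting element can hide inside a join subgroup, respectively a virtual direct factor. This last point is the genuine subtlety, as it underlies the growth-tightness consequence in item (1); it relies on the characterization of a contracting element via its virtually cyclic elementary closure, which has to be invoked with care.
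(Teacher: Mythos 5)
Your proposal is correct and follows essentially the same route as the paper: a geometric action is SCC, contracting elements coincide with rank-1 elements for geometric CAT(0) actions (via \cite{BF2}, \cite{ChaSul}), so \ref{GenericThm} gives exponential genericity, and the two subclasses are handled by exactly the structural inputs the paper cites (\cite{BehC}, \cite{BHS}, \cite{ChaSul}). Your argument for the ``as a consequence'' clause --- every element of a join subgroup has cyclic centralizer-index infinite, hence is non-contracting, so the subgroup sits inside the growth-tight set of non-contracting elements and monotonicity of the critical exponent finishes it --- correctly fills in a step the paper leaves implicit, and matches the paper's own use of the finite-index-in-centralizer criterion.
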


\subsection{{Conjugacy classes of non-contracting elements}}
We consider an implication of \ref{GenericThm} on counting conjugacy classes of non-contracting elements. To state the result, we need introduce the length of a conjugacy class, which is motivated by the length of a geodesic in a geometric setting below,  for CAT(0) spaces.

Recall that a hyperbolic  isometry on CAT(0) spaces preserves a geodesic called the \textit{axis}, on which it acts by translation (cf. \cite{BriHae}). Among the hyperbolic ones, a \textit{rank-1 isometry} requires in addition that the axis does not bound on a half-Euclidean plane.  Therefore, in a compact rank-1 manifold, each geodesic corresponds to a (conjugacy class of) hyperbolic isometry, among which we can distinguish rank-1 and non-rank-1 geodesics.  An example presented in Kneiper \cite{Kneiper1}  shows that  non-rank-1 geodesics can grow exponentially fast. However, as a corollary to his solution of a conjecture of Katok \cite[Corollary 1.2]{Kneiper1}, Kneiper showed that the number of them is stictly less than that of rank-1 geodesics. The second main result we are stating can be viewed as a generalization of this result in the setting of a SCC action with contracting elements.


Let $[g]$ denote the conjugacy class of $g\in G$. With respect to a basepoint $o$, the \textit{(algebraic)  length} of the conjugacy  class $[g]$ is defined as follows:
$$\ell^o([g])=\min \{d(o, g'o): g'\in [g]\}.$$ 
 
We remark that this does not agree with the geometric length of a conjugacy class (when viewed as a closed geodesic on manifolds, for example). Nevertheless, if the action of $G$ on $\mathrm Y$ is cocompact, then these two lengths differ up to an additive constant. With this coarse identification, our results could be applied in a geometric setting to count the number of closed geodesics. 

Similarly to that of orbital points, the \textit{growth rate of conjugacy classes} of an  infinite set $X$ in $G$ is defined as follows 
$$
\ec X:=\limsup_{n\to \infty}\frac{\log \sharp \{[g]: g\in X, \ell^o([g])\le n\} }{n}.
$$

We are ready to state the next main result, which is proved in \textsection\ref{Section3} as an initial step in the proof of \ref{GenericThm}.  

\begin{mainthm}[Conjugacy classes of non-contracting elemenets] \label{ConjugacyThm}
Assume that $G$ admits a SCC action on a geodesic metric space $(\mathrm Y, d)$ with a contracting element. Then the growth rate of   conjugacy classes of non-contracting  elements is strictly less than  $\e G$. 
 
\end{mainthm}

The first application is given to the class of groups with rank-1 elements, generalizing the above Kneiper's result on compact rank-1 manifolds to the setting of  singular spaces:

\begin{thm}[$\mathbb{CAT_0^1}$]\label{CAT01Thm} 
Suppose that $G$ acts properly and cocompactly on a $\mathrm{CAT}(0)$ space with a rank-1 element. 
Then  the growth rate of conjugacy classes of non-rank-1 elements is strictly less than $\e G$. 
\end{thm}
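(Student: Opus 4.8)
The plan is to obtain this as a direct consequence of \ref{ConjugacyThm}, after translating the geometric hypotheses into the language of that theorem. First, a proper cocompact action of $G$ on a $\mathrm{CAT}(0)$ space $\mathrm Y$ is in particular an SCC action: by cocompactness $\mathrm Y$ lies within bounded Hausdorff distance of an orbit $Go$, so the statistical convexity condition of \textsection\ref{SSSCC} holds trivially. Second, the hypothesized rank-1 element supplies a contracting element for this action, its axis being a periodic contracting geodesic along which the generated cyclic subgroup acts with contracting orbit; this is the standard dictionary between rank-1 isometries and contracting elements for geometric $\mathrm{CAT}(0)$ actions, recorded in \cite{YANG10}. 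Hence \ref{ConjugacyThm} applies and gives $\ec X < \e G$, where $X$ denotes the set of non-contracting elements of $G$.

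It then suffices to check that every non-rank-1 element lies in $X$, equivalently that every contracting element acts as a rank-1 isometry. A contracting element $g$ has infinite order, and since the action is proper and cocompact every element of $G$ is semisimple; thus $g$ is a hyperbolic isometry preserving an axis $\ax(g)$, and the contracting orbit $\langle g\rangle o$ lies at bounded distance from $\ax(g)$. If $\ax(g)$ bounded a flat half-plane it could not be a contracting subset, contradicting this; so $g$ is rank-1. Moreover being rank-1 (equivalently, contracting) is a conjugacy-invariant notion, so both the set of non-rank-1 elements and $X$ are unions of conjugacy classes, and the set inclusion passes to an inclusion of the corresponding families of conjugacy classes. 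Counting with respect to the length $\ell^o$ yields
$$
\ec\big(\{g\in G :\ g\ \text{is not a rank-1 element}\}\big)\ \le\ \ec X\ <\ \e G,
$$
which is the assertion.

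I do not expect a genuine obstacle here. The two facts being invoked — that a geometric action is SCC, and that for a geometric $\mathrm{CAT}(0)$ action the contracting elements are exactly the rank-1 isometries — are standard and already available, while the analytic content is carried entirely by \ref{ConjugacyThm}. The only point requiring (minor) care is the semisimplicity and the "a contracting axis cannot bound a half-flat" observation, which together identify the non-rank-1 elements with the non-contracting ones on the nose, so that the growth of conjugacy classes of the former is controlled by that of the latter.
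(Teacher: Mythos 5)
Your deduction is correct, but it is not the route the paper takes for this theorem. You reduce everything to \ref{ConjugacyThm} by checking (i) a geometric action is SCC, (ii) a rank-1 element is contracting, and (iii) every contracting element of a geometric $\mathrm{CAT}(0)$ action is rank-1 (infinite order, semisimple, axis at finite Hausdorff distance from the contracting orbit hence itself contracting by Proposition \ref{Contractions}, and a contracting geodesic cannot bound a flat half-plane), so that the non-rank-1 elements sit inside the non-contracting ones and the conjugacy-class count is monotone. All of these steps are sound, and the conclusion follows. The paper instead gives a self-contained geometric argument that bypasses \ref{ConjugacyThm} and its main technical input (Theorem \ref{NCisBarrierfree} / Proposition \ref{NCRep}): elliptic elements are discarded since they form finitely many conjugacy classes, and for a hyperbolic non-rank-1 element one translates the axis together with its bounding flat half-plane into an $M$-neighborhood of the basepoint; the half-plane forces the resulting conjugacy representative to be $(\epsilon,M,c)$-barrier-free for a fixed rank-1 element $c$ of high power, and one concludes directly from the growth-tightness Theorem \ref{GrowthTightThm}. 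So the paper's proof is cheaper, exploiting the half-plane to produce a barrier-free representative in one step, and is presented precisely as a warm-up illustrating the idea behind the much harder general statement; your proof is the clean formal specialization of that general statement, which buys brevity at the cost of invoking the full strength of \ref{ConjugacyThm}. Both are valid; just be aware that the identification ``contracting $\Leftrightarrow$ rank-1'' you use is doing real work (it is where the half-plane geometry enters in your version), so it deserves the explicit justification you sketched rather than only a citation.
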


Kneiper's proof in the smooth case  makes essential use of conformal densities on the boundary, while our proof  replies on a growth-tightness result for a class of barrier-free set we are describing now.

 With a basepoint $o$ fixed, an element $h\in G$ is called \textit{$(\epsilon, M, g)$-barrier-free} if there exists an \textit{$(\epsilon, g)$-barrier-free} geodesic $\gamma$ with $\gamma_-\in B(o, M)$ and $\gamma_+\in B(ho, M)$:  there exists no $t\in G$ such that $d(t\cdot o, \gamma), d(t\cdot ho, \gamma)\le \epsilon$. Denote below  by $\mathcal V_{\epsilon, M, g}$   the set of $(\epsilon, M, g)$-barrier-free elements.  One of main results proven in \cite{YANG10} is that $\mathcal V_{\epsilon, M, g}$ is growth-tight, stated here in Theorem \ref{GrowthTightThm}. 

 We are now sketching the proof in the CAT(0) case,  which illustrates the basic idea of the more complex \ref{ConjugacyThm} ,
 
\begin{proof}[Sketch of the proof of Theorem \ref{CAT01Thm}]
Let $\mathcal{NC}$ denote  the set of non-rank-1 elements. We consider only hyperbolic non-rank-1 elements $g\in G$, since there are only finitely many conjugacy classes of elliptic elements \cite{BriHae}. The axis of such an element $g$ bounds a Euclidean half-plane. Since the action is cocompact, we can translate the axis of $g$ (and the bounding half-plane) to a compact neighborhood of diameter $M$ of a basepoint $o$.  To bound $\ec{\mathcal{NC}}$, it suffices to bound the cardinality of the set $X$ of non-rank-1 elements $g \in G$ with $go$ on the boundary of half-planes. 

We now fix  a rank-1 element $c$ (of sufficiently high power). Since $\langle c\rangle \cdot o$ is contracting, a segment $[o, c\cdot o]$  {is not likely} to be near any Euclidean half-plane. This implies that $g$ are $(\epsilon, M, c)$-barrier-elements and so the set $X$ is contained in $\mathcal V_{\epsilon, M, c}$.  Thus, the growth-tightness theorem \ref{GrowthTightThm} implies $\e{\mathcal V_{\epsilon, M, c}}<\e G$,  concluding the proof.
\end{proof}

In a manner parallel  to that for smooth manifolds, we could produce examples in the class of RAAGs  with exponential growth of non-rank-1 elements. An example with infinitely many ends is  the free product $\mathbb F_2 \star (\mathbb F_2\times \mathbb F_2)$, whose defining graph is the disjoint union of two vertices and a square.

To conclude this discussion, we deduce the following corollary for relatively hyperbolic groups:
\begin{thm}[$\mathbb{RelHyp}$]
In a relatively hyperbolic group $G$, the growth rate of conjugacy classes of parabolic elements and torsion elements is strictly less than $\e G$, which is computed with respect to the word metric. 
\end{thm}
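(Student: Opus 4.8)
The plan is to argue directly with barrier-free sets, in the spirit of the proof sketch of Theorem~\ref{CAT01Thm}, rather than to invoke Main Theorem~\ref{ConjugacyThm}. The reason \ref{ConjugacyThm} does not apply verbatim is the one noted after Theorem~\ref{RelHypGeneric}: a parabolic element may be a contracting element for the action on the Cayley graph $\mathscr G(G,S)$ --- for instance the generator of the $\mathbb Z$-factor of $\mathbb Z\star \mathbb Z_2$, which is hyperbolic as a group --- so ``parabolic'' is in general strictly larger than ``non-contracting''. I would therefore work with the word-metric action $G\act \mathscr G(G,S)$, which is proper and cocompact, hence SCC. It carries a contracting element: since $G$ is non-elementary and its peripheral subgroups are proper, $G$ contains a hyperbolic element, and hyperbolic elements of relatively hyperbolic groups are contracting for the word metric (see \cite{GePo4}). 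Fix one such element and, after raising it to a sufficiently large power specified below, denote it $c$; observe that $c$ is loxodromic on the relative Cayley graph $\mathscr G(G,S\cup\mathcal P)$, so that $\langle c\rangle\cdot o$ has infinite diameter there.

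The first step is a reduction of the count to peripheral subgroups. Let $P_1,\dots,P_k$ be representatives of the conjugacy classes of maximal parabolic subgroups. Every parabolic element is conjugate into some $P_i$, and every torsion element is conjugate into some $P_i$ except for finitely many conjugacy classes, which contribute growth rate $0$. Using the almost malnormality of the $P_i$ and the bounded coset penetration property \cite{Osin}, I would show that there is a constant $C_0 = C_0(G,S,\mathcal P)$ such that any parabolic or torsion element $g$ conjugate into some $P_i$ either satisfies $\ell^o([g]) \le C_0$, or else $[g]$ has a representative $h \in P_i$ with $d(o,ho) \le \ell^o([g]) + C_0$. The mechanism is that a shortest conjugate of $g$ must lie uniformly close to a peripheral coset: if it were far from every peripheral coset, a geodesic of $\mathscr G(G,S)$ joining $o$ to its $o$-image would enter and leave the horoball stabilising its fixed parabolic point near the entrance and exit points of that coset, and conjugating ``inward'' along the coset would yield a strictly shorter conjugate. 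This is a purely relative-hyperbolic statement, independent of the genericity machinery, and it is the main obstacle of the proof.

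Next I would show that $\bigcup_{i=1}^k P_i \subseteq \mathcal V_{\epsilon,M,c}$ for suitable $\epsilon$, $M$ and a suitable power $c$. Peripheral cosets are quasiconvex in $\mathscr G(G,S)$, so for $h \in P_i$ a geodesic $\gamma = [o,ho]$ lies in a uniform neighbourhood $N_{C_1}(P_i\cdot o)$, and $N_{C_1}(P_i\cdot o)$ has uniformly bounded diameter in $\mathscr G(G,S\cup\mathcal P)$. If $\gamma$ had an $(\epsilon, c)$-barrier at some $t \in G$, then, $[o,co]$ being a contracting geodesic segment, the translate $t\cdot[o,co]$ would $D$-fellow-travel $\gamma$ for a uniform $D$, whence $[o,co] \subseteq N_{D+C_1}(t^{-1}P_i\cdot o)$, a uniform neighbourhood of a peripheral coset. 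Choosing the power $c$ so that the $\mathscr G(G,S\cup\mathcal P)$-diameter of $[o,co]$ exceeds the uniform bound on the $\mathscr G(G,S\cup\mathcal P)$-diameter of such neighbourhoods --- possible because $\langle c\rangle\cdot o$ has infinite diameter there --- produces a contradiction. Hence $\gamma$ is $(\epsilon,c)$-barrier-free and $h \in \mathcal V_{\epsilon,M,c}$ (one may take $M=1$); the same argument applies to every element of $\bigcup_i P_i$, whether of finite or infinite order.

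Finally I would assemble the pieces. Up to the additive constant $C_0$ in the length, and up to the finitely many exceptional conjugacy classes, the number of conjugacy classes of parabolic or torsion elements with $\ell^o \le n$ is at most $\sharp\big(N(o,n+C_0)\cap\mathcal V_{\epsilon,M,c}\big)$, since distinct conjugacy classes have distinct chosen representatives lying in $\bigcup_i P_i \subseteq \mathcal V_{\epsilon,M,c}$. As the word-metric action is SCC and $c$ is contracting, the growth-tightness of barrier-free sets (Theorem~\ref{GrowthTightThm}) gives $\omega(\mathcal V_{\epsilon,M,c}) < \omega(G)$, and therefore the growth rate of conjugacy classes of parabolic and torsion elements is strictly less than $\omega(G)$.
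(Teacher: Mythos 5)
Your proposal is correct in outline, and it takes a genuinely different route from the paper. The paper obtains this statement from its general machinery: maximal parabolic subgroups are quasi-convex of infinite index, hence contained in a barrier-free set $\mathcal V_{\epsilon,M,f}$ by Lemma \ref{wqcGTight}, and then the conjugation-invariant set $\mathcal {BF}$ of elements having a conjugate in $\mathcal V_{\epsilon,M,f}$ is growth-tight by Theorem \ref{tightNC}, whose proof is exactly the content of \textsection\ref{Section5}--\textsection\ref{Section7}; since $\mathcal{BF}$ is a union of conjugacy classes, minimal representatives of parabolic classes lie in it and the conjugacy growth is at most $\e {\mathcal{BF}}<\e G$ (torsion elements are never contracting, so they are also covered by \ref{ConjugacyThm}). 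You bypass Theorem \ref{tightNC} entirely: your Step 1 transfers each class to a representative lying \emph{inside} some $P_i$ with only an additive loss $C_0$, after which the count only needs growth-tightness of the barrier-free set itself (Theorem \ref{GrowthTightThm}), and you replace the Neumann-type argument of Lemma \ref{wqcGTight} by a direct relative-hyperbolic verification that $\bigcup_i P_i\subset\mathcal V_{\epsilon,M,c}$ (bounded relative diameter of neighbourhoods of peripheral cosets versus unbounded relative diameter of $\langle c\rangle\cdot o$). This buys independence from the projection-complex machinery, at the cost that Step 1 is where the real work sits and is only sketched: one needs the contraction/BCP property of peripheral cosets to force a minimal conjugate uniformly close to the coset it preserves, with the short-translation case absorbed into the branch $\ell^o([g])\le C_0$, and the finiteness of torsion classes not conjugate into peripherals should be cited (Osin) or replaced by the simpler remark that torsion elements are non-contracting. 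Two small fixes: take $M$ to be the constant of Theorem \ref{GrowthTightThm} rather than $M=1$ (your geodesic $[o,ho]$ being $(\epsilon,c)$-barrier-free gives membership for every $M$), and justify the fellow-travelling of $t\cdot[o,co]$ with $\gamma$ via the Morse/quasi-convexity property of the contracting segment (Lemma \ref{contractingsegment} and Proposition \ref{Contractions}) rather than mere closeness of endpoints.
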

By a similar trick  as above, one can construct  examples of relatively hyperbolic groups with an exponential number of classes of torsion elements. For instance, consider a free product of two groups with infinitely many classes of torsion elements. The theorem then implies that the number of torsion elements are exponentially small relative to that of hyperbolic elements.

\subsection{{Related works on genericity problem}}
Generic elements have been studied  by many authors   by undertaking a random walk on groups. Consider a probability measure $\mu$ on $G$ with finite support. Starting from the identity, one walks to subsequent elements according to the distribution of $\mu$. In $n$ steps, the distribution becomes the $n$th convolution $\mu^{*n}$.  In this regard, Maher \cite{Maher2} proved that a random element in sufficiently large subgroups in $\mathbb {Mod}$ tends to be a pseudo-Anosov element with   probability 1 as $n\to \infty$.  This result was generalized to the class of groups with ``weakly contracting'' elements by Sisto \cite{Sisto} (his definition is different from ours). 

As a matter of fact, the measure $\mu^{*n}$ is   far from the counting measure on $n$-spheres in groups. To be precise, consider the asymptotic \textit{entropy} $h(\mu)$ and \textit{drift} $\ell(\mu)$ associated with a random walk. These two quantities and the growth rate $\e G$  are related by the following \textit{fundamental inequality} (cf. \cite{Gui}):
$$
h(\mu) \le \ell(\mu) \cdot \e G
$$
Equality would  suggest  that a random walk could approach most elements. We refer to Vershik \cite{Vershik} for  related definitions and the background, and to Gou\"{e}zel et al. \cite{GMM}  for  recent progress on the strictness of this inequality in hyperbolic groups.
 
In a sense, a counting measure could reveal more information, so  the generic elements   in counting measure is usually quite different from that in a random walk. To our best knowledge,  there are very few  results in counting measures arising from word metrics.  A progress made by Caruso and Wiest \cite{CaWi} in braid groups showed that generic elements are pseudo-Anosov in the word metric with respect to  Garside's generating set. Some of the ingredients were generalized later by Wiest to treat other classes of groups in \cite{Wiest}. For instance, partial cases of Theorems \ref{RelHypGeneric} and \ref{CATGeneric} was obtained there under some automatic hypothesis.   Recently, Gehktman, Tylor and Tiozzo \cite{GTT} estbalished for word metrics the generic elements in a non-elementary hyperbolic group action. 

We emphasize that all these works assume a non-elementary  action on $\delta$-hyperbolic spaces and the existence of certain automatic structures, which are not needed in the present work. In contrast, by assuming the existence of a contracting element, our methods presented here seems to be effective in treating many genericity problems in a unified way.  Except the ones stated in \textsection \ref{SSgenericity}, we mention another result  in \cite[Proposition 2.21]{YANG10} which was proved by using similiar technics.
\begin{prop}
Assume that a finitely generated group $G$ acts properly on a geodesic metric space $(\mathrm Y, d)$ with a contracting element. Then for any finite generating set $S$,  
$$
\frac{\sharp \{g\in N(1, n): \text{g is contracting}\}}{\sharp N(1, n)}>0
$$ 
where $N(1, n)=\{g\in G: d_S(1, g)\le n\}$ where $d_S$ the corresponding word metric.
\end{prop}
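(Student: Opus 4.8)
The plan is to deduce this proposition from the growth-tightness theorem for barrier-free sets (Theorem~\ref{GrowthTightThm}, concerning $\mathcal V_{\epsilon, M, g}$) together with the purely exponential growth of proper cocompact actions. Since $G$ acts properly and cocompactly on its Cayley graph $\Gx$ with respect to any finite generating set $S$, and since a contracting element persists under the quasi-isometry between $\mathrm Y$ and $\Gx$, the group $G$ acts properly on $\Gx$ with a contracting element; moreover this action is SCC (indeed cocompact), so by the results imported from \cite{YANG10} it has purely exponential growth: $\sharp N(1,n) \asymp \exp(\e G \cdot n)$. Thus it suffices to show that the set $\mathcal{NC}$ of non-contracting elements satisfies $\e{\mathcal{NC}} < \e G$, i.e. is growth-tight, since then $\sharp(\mathcal{NC}\cap N(1,n)) = o(\exp(\e G n)) = o(\sharp N(1,n))$ and the complementary set of contracting elements has positive (in fact full) proportion.

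The core step is to cover $\mathcal{NC}$, or a suitable cofinite-index-sublattice portion of it, by a barrier-free set $\mathcal V_{\epsilon, M, c}$ for an appropriately chosen contracting element $c$. First I would fix an independent contracting element $c$ (passing to a high power if necessary) furnished by the hypothesis, so that $\langle c\rangle \cdot o$ is a strongly contracting quasigeodesic axis; and choose the barrier parameters $\epsilon, M$. The key claim is: there is a choice of $(\epsilon, M)$ such that every non-contracting element $g\in G$ is $(\epsilon, M, c)$-barrier-free, i.e.\ some geodesic from near $o$ to near $go$ has no $\langle c\rangle\cdot o$-barrier. The intuition — exactly as in the sketch of Theorem~\ref{CAT01Thm} above — is that if many well-separated translates of the $c$-axis fellow-travel a geodesic $[o,go]$, one can use the Extension/Contraction lemmas from \cite{YANG10} to surger these pieces together and build a contracting axis for $g$, contradicting $g\in\mathcal{NC}$; so a non-contracting $g$ must admit a barrier-free geodesic. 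Once this claim holds, $\mathcal{NC}\subset \mathcal V_{\epsilon, M, c}$ (up to enlarging $M$ to absorb the ``near $o$'' and ``near $go$'' slack), and Theorem~\ref{GrowthTightThm} gives $\e{\mathcal V_{\epsilon, M, c}}<\e G$, hence $\e{\mathcal{NC}}<\e G$.

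Combining, $\sharp(\mathcal{NC}\cap N(1,n)) \preceq \exp(\e{\mathcal{NC}}\, n)$ with $\e{\mathcal{NC}}<\e G$, while $\sharp N(1,n) \asymp \exp(\e G n)$; therefore
\[
\frac{\sharp\{g\in N(1,n): g \text{ is contracting}\}}{\sharp N(1,n)} = 1 - \frac{\sharp(\mathcal{NC}\cap N(1,n))}{\sharp N(1,n)} \;\longrightarrow\; 1 > 0
\]
as $n\to\infty$, which is (much) more than the asserted positivity. In fact this argument is just the proper-action case of \ref{GenericThm} specialized to the Cayley graph action, so formally the proposition is an immediate corollary of \ref{GenericThm} once the word-metric action is recognized as a proper action with a contracting element; I would note this and then only spell out the reduction.

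\textbf{Main obstacle.} The delicate point is the barrier claim: verifying that non-contracting elements are genuinely barrier-free with \emph{uniform} constants $\epsilon, M$ independent of $g$. This is where one needs the structural input from \cite{YANG10} — the characterization of contracting elements via axes, and the combination/extension lemmas allowing one to concatenate contracting segments along a geodesic into a global contracting quasigeodesic. The risk is a subtle quantifier issue (the constants in the concatenation lemma degrading with the number of pieces); handling it requires choosing $c$ of sufficiently large translation length so that a \emph{single} barrier, or a bounded number of them, already forces contraction, which is precisely the content of the relevant lemma in \cite{YANG10}. Everything else — the quasi-isometry invariance of contracting elements, purely exponential growth of cocompact actions, and the final counting — is routine given the cited results.
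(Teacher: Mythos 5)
There is a genuine gap, and it sits at the very first step. You transfer the contracting element from the action on $\mathrm Y$ to the action on the Cayley graph by invoking ``quasi-isometry invariance'' of the contracting property. This is unjustified and, in the generality of the proposition, false. First, the action on $\mathrm Y$ is only assumed \emph{proper}, not cocompact, so the orbit map need not be a quasi-isometry at all (mapping class groups acting on Teichm\"uller space are the motivating example). Second, even between cocompact actions, the strong contracting property used here is defined via genuine geodesics and nearest-point projections and is not a quasi-isometry invariant. If your transfer step were valid, it would immediately settle Conjecture \ref{FarbConj} (that mapping class groups act on their Cayley graphs with contracting elements) and hence Farb's genericity conjecture --- which the paper explicitly leaves open and indeed identifies as the obstruction to upgrading this proposition to genericity. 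The same confusion appears in your conclusion: ``contracting'' in the statement refers to the $\mathrm Y$-action, while your argument would count elements contracting for the Cayley-graph action. Relatedly, \ref{GenericThm} applied to the $\mathrm Y$-action yields genericity in the balls $N(o,n)\subset\mathrm Y$, not in the word-metric balls $N(1,n)$; the entire content of this proposition is the mismatch between the metric used for counting and the metric in which contraction is defined, which is exactly why the conclusion is only positivity of the proportion rather than convergence to $1$.

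The intended argument (Proposition 2.21 of \cite{YANG10}, proved by the ``similar technics'' alluded to, namely the extension map in the sketch of Theorem \ref{GrowthTightThm}) is elementary and does not pass through growth-tightness at all. One fixes a finite set $F$ of large powers of independent contracting elements for the action on $\mathrm Y$; the extension lemma produces, for each $g\in G$, some $f\in F$ such that the bi-infinite path labelled by powers of $gf$ is an admissible, hence contracting, quasi-geodesic in $\mathrm Y$, so $gf$ is a contracting element. The map $g\mapsto gf$ increases word length by at most $C:=\max_{f\in F}d_S(1,f)$ and is at most $(\sharp F)^2$-to-one, so
$\sharp\{\text{contracting }h\in N(1,n+C)\}\ge (\sharp F)^{-2}\,\sharp N(1,n)$,
and submultiplicativity $\sharp N(1,n+C)\le \sharp N(1,n)\cdot\sharp N(1,C)$ yields a uniform positive lower bound on the proportion. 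No information about the Cayley-graph action beyond the triangle inequality is needed, and no claim stronger than positivity is obtained. I would rework your proof along these lines and drop the quasi-isometry transfer entirely.
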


This generalizes the recent result of Cumplido and Wiest \cite{CWiest} in mapping class groups that a positive proportion of elements are pseudo-Anosov. See also Cumplido \cite{Cumplido} for a similar result in Artin-Tits groups.

The structure of the rest of this paper is  as follows. The preliminary \textsection \ref{Section2} recalls necessary results proved in \cite{YANG10}. \ref{ConjugacyThm} is first proved in  section \ref{Section3}. A general theorem \ref{tightNC} is stated in \textsection \ref{Section4},  from which we deduce \ref{GenericThm}, and Theorem \ref{ModGeneric}, \;\ref{ModGeneric2}. Its proof is given by assuming an almost geodesic decomposition in Proposition \ref{almostgeodform0}, which is the goal of the following three sections \textsection\textsection \ref{Section5}, \ref{Section6}, \ref{Section7}. More preliminary is recalled in \textsection \ref{Section5} to give a brief introduction of projection complex and quasi-tree of spaces. They are used in the following \textsection \ref{Section6} to prove Lemma \ref{sameLevel}, which is the starting point of the proof of Proposition \ref{almostgeodform0} occupying the final \textsection \ref{Section7}. 

\ack
The author is grateful  to Brian Bowditch, Jason Behrstock and Ilya Gehtmann for helpful conversations.

\section{Preliminary}\label{Section2}
Most of materials are taken from the paper \cite{YANG10}, to which we refer for more details and complete proofs.  

\subsection{Notations and conventions}\label{ConvSection}
Let $(\mathrm Y, d)$ be a proper geodesic metric space. Given a point $y \in Y$ and a subset $X \subset \mathrm Y$,
let $\pi_X(y)$ be the set of points $x$ in $X$ such that $d(y, x)
= d(y, X)$. The \textit{projection} of a subset
$A \subset \mathrm Y$ to $X$ is then $\pi_X(A): = \cup_{a \in A} \pi_X(a)$.

Denote $\proj_X(Z_1, Z_2):=\diam{\pi_X({Z_1\cup Z_2})}$, which is the diameter of the projection of the union $Z_1\cup Z_2$ to $X$. So $d_X^\pi(\cdot, \cdot)$ satisfies the triangle inequality
$$
d_X^\pi(A, C) \le d_X^\pi(A, B) +d_X^\pi(B, C).
$$ 


We always consider a rectifiable path $\alpha$ in $\mathrm Y$ with arc-length parameterization.  Denote by $\len (\alpha)$ the length
of $\alpha$, and by $\alpha_-$, $\alpha_+$ the initial and terminal points of $\alpha$ respectively.   Let $x, y \in \alpha$ be two points which are given by parameterization. Then $[x,y]_\alpha$ denotes the parameterized
subpath of $\alpha$ going from $x$ to $y$. We also denote by $[x, y]$ a choice of a geodesic in $\mathrm Y$ between $x, y\in\mathrm Y$.  
\\
\paragraph{\textbf{Entry and exit points}} Given a property (P), a point $z$ on $\alpha$ is called
the \textit{entry point} satisfying (P) if $\len([\alpha_-, z]_\alpha)$ is
minimal   among the points
$z$ on $\alpha$ with the property (P). The \textit{exit point} satisfying (P) is defined similarly so that $\len([w,\alpha_+]_\alpha)$ is minimal.

A path $\alpha$ is called a \textit{$c$-quasi-geodesic} for a constant $c\ge 1$ if the following holds 
$$\len(\beta)\le c \cdot d(\beta_-, \beta_+)+c$$
for any rectifiable subpath $\beta$ of $\alpha$.

Let $\alpha, \beta$ be two paths in $Y$. Denote by $\alpha\cdot \beta$ (or simply $\alpha\beta$) the concatenated path provided that $\alpha_+ =
\beta_-$.

Let $f, g$ be real-valued functions with domain understood in
the context. Then $f \prec_{c_i} g$ means that
there is a constant $C >0$ depending on parameters $c_i$ such that
$f < Cg$.  The symbols    $\succ_{c_i}  $ and $\asymp_{c_i}$ are defined analogously. For simplicity, we shall
omit $c_i$ if they are   universal constants.
\subsection{Contracting elements}
\begin{defn}[Contracting subset]\label{ContrDefn}
For given $C\ge 1$, a subset $X$ in $\mathrm Y$ is called $C$-\textit{contracting} if for any geodesic $\gamma $ with $d(\gamma,
X) \ge C$, we have
$$\proj_{X} (\gamma)  \le C.$$
 A
collection of $C$-contracting subsets is referred to
as a $C$-\textit{contracting system}.
\end{defn}




We collect a few properties that will be used often later on.
\begin{prop}\label{Contractions}
Let $X$ be a contracting set.
\begin{enumerate}
\item
(Quasi-convexity) \label{qconvexity}  $X$ is \textit{$\sigma$-quasi-convex} for a function $\sigma: \mathbb R_+ \to \mathbb R_+$: given $c \ge
1$,  any $c$-quasi-geodesic with endpoints in $X$ lies in the
neighborhood $N_{\sigma(c)}(X)$. 
\item
(Finite neighborhood) \label{nbhd}  Let $Z$ be a set with finite Hausdorff distance to $X$. Then $Z$ is contracting.

\item
(Subpaths) \label{subpath} If $X$ is a quasi-geodesic, then any subpath of $X$ is contracting with contraction constant depending only on $X$. 

There exists $C>0$ such that the following hold:

\item
(1-Lipschitz projection) \label{1Lipschitz} $\proj_X(\{y, z\})\le d(y, z)+C$.

\end{enumerate}
\end{prop}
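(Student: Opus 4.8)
I would establish the four items in the order listed, since each uses only the definition of a contracting set together with the preceding parts. For part~(1), quasi-convexity, suppose $\alpha$ is a $c$-quasi-geodesic with $\alpha_-,\alpha_+\in X$, and let $x\in\alpha$ realize $d(x,X)=\max_{z\in\alpha}d(z,X)$. If $d(x,X)\le C$ there is nothing to prove, so assume $d(x,X)>C$. Walking backwards along $\alpha$ from $x$ to $\alpha_-$ (and forwards to $\alpha_+$), pick the entry/exit points $u,v$ of $\alpha$ into the radius-$C$ neighborhood of $X$ relative to $x$; the sub-geodesics of $[u,v]_\alpha$ that stay outside $N_C(X)$ have controlled projection to $X$ by the contracting property, and $\pi_X(u),\pi_X(v)$ are within $C$ of $u,v$. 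Concatenating a bounded number of such geodesic pieces and invoking the triangle inequality for $\proj_X$ bounds $\proj_X(\alpha_-,\alpha_+)$; but $\alpha_\pm\in X$, so $d(\alpha_-,\alpha_+)$ is bounded, and since $\alpha$ is a $c$-quasi-geodesic its whole length is bounded, forcing $d(x,X)$ itself to be bounded by a function $\sigma(c)$ of $c$ alone. (The cleanest route is actually the standard one: if $d(x,X)$ were large, the geodesic $[\alpha_-,\alpha_+]$ together with $\alpha$ would bound a ``thin'' region and the contracting property would give a contradiction with the quasi-geodesic inequality; I would use whichever bookkeeping is shortest.)

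Part~(2), stability under finite Hausdorff distance, is the routine one: if $d_{\mathrm{Haus}}(X,Z)\le D$, take a geodesic $\gamma$ with $d(\gamma,Z)\ge C'$ for $C'$ to be chosen; then $d(\gamma,X)\ge C'-D$, so for $C'$ large enough relative to $C$ and $D$ the contracting property of $X$ gives $\proj_X(\gamma)\le C$, and moving projections from $X$ back to $Z$ costs at most $2D$, yielding $\proj_Z(\gamma)\le C+2D=:C'$ after adjusting constants consistently. For part~(3), subpaths of a quasi-geodesic: if $X$ is a $c$-quasi-geodesic and $X'\subseteq X$ a subpath, then for any geodesic $\gamma$ far from $X'$, the projection $\pi_{X'}(\gamma)$ can be compared to $\pi_X(\gamma)$ — a point of $X$ projecting near $\gamma$ but lying outside $X'$ must lie ``past an endpoint'' of $X'$, and quasi-convexity of $X$ (part~(1)) plus the fact that $X$ is a quasi-geodesic controls how far such points can project onto $X'$; this gives a contraction constant depending only on $c$ and the constant for $X$.

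Part~(4), the $1$-Lipschitz (more precisely, coarsely $1$-Lipschitz) projection estimate $\proj_X(\{y,z\})\le d(y,z)+C$, is where I expect the only real subtlety, and it should be proved after (1)–(3) since it uses quasi-convexity. The idea: let $p\in\pi_X(y)$, $q\in\pi_X(z)$; I want $d(p,q)\le d(y,z)+C$. Consider the path $[p,y]\cdot[y,z]\cdot[z,q]$; by the nearest-point property, $[p,y]$ and $[q,z]$ are (uniformly) quasi-geodesic when concatenated with a geodesic along $X$, or one argues directly that the geodesic $[y,z]$ cannot have large projection to $X$ unless it passes near $X$, in which case $d(y,z)$ already absorbs the cost. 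Concretely I would split into cases according to whether $d([y,z],X)\ge C$ (then $\proj_X([y,z])\le C$ and $d(p,q)\le \proj_X(y)+C+\proj_X(z)$ with the outer terms controlled since $p,q$ are nearest points) or $d([y,z],X)<C$ (then pick $w\in[y,z]$ with $d(w,X)<C$, so $d(p,w)\le d(p,y)+d(y,w)\le d(w,y)+C+d(y,w)$ roughly, and similarly for $q$, and sum). Reconciling the constants across these cases, so that a single $C$ works, is the fiddly part; everything else is triangle-inequality bookkeeping. I would state the constant $C$ at the end as depending only on the original contracting constant.
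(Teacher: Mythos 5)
Your sketches of (1), (2) and (4) are at the level of detail the paper itself adopts (it dismisses these as straightforward applications of the contracting property), and they are essentially fine, with one bookkeeping slip in (4): in your second case the estimate $d(p,w)\le 2d(y,w)+C$, $d(q,w)\le 2d(w,z)+C$ only yields $d(p,q)\le 2d(y,z)+2C$, a multiplicative loss, not the additive bound claimed. The standard repair is to take the entry and exit points $w_1,w_2$ of the geodesic $[y,z]$ in $N_C(X)$, bound the projections of the two outer subsegments by the contracting property, and use $d(w_1,w_2)\le d(y,z)$ together with $d(w_i,X)\le C$, giving $\textbf{d}^\pi_X(\{y,z\})\le d(y,z)+4C$ in one stroke (no quasi-convexity is needed, and no case split on $d([y,z],X)$).

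The genuine gap is in (3), which is precisely the item the paper proves in detail (citing \cite[Lemma 3.2]{BF2} only for CAT(0) geodesics). Your plan is to compare $\pi_{X'}(\gamma)$ with $\pi_X(\gamma)$ and to argue that points of $X$ lying beyond an endpoint of the subpath $X'$ project near that endpoint. This fails in the main case: when the geodesic to be projected is far from $X'$ but runs close to $X\setminus X'$, the contracting property of $X$ gives no control whatsoever on its projection to $X$, so there is nothing to compare with; and the auxiliary claim that a point $w\in X\setminus X'$ has $\pi_{X'}(w)$ uniformly close to the nearer endpoint of $X'$ is not true with a constant depending only on $c$ and $C$ --- the quasi-geodesic inequality only bounds $d(\pi_{X'}(w),X'_\pm)$ in terms of $d(w,X')$, which is unbounded. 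The paper's proof supplies the missing mechanism: it first establishes a Claim that every subpath $\alpha$ of the contracting quasi-geodesic lies at uniform Hausdorff distance $D=D(c,C)$ from a geodesic joining points $2C$-close to its endpoints, and then, for a geodesic $\beta$ with $\beta\cap N_D(\alpha)=\emptyset$ and projection points $x,y\in\pi_\alpha(\beta)$ of $\tilde x,\tilde y\in\beta$, it (i) uses the Claim to rule out that the projection of $\beta$ to the whole quasi-geodesic straddles $\alpha_-$ by more than $C$ (otherwise the subpath between two such projection points, which contains $\alpha_-$, would be $D$-close to $\beta$, contradicting $\beta\cap N_D(\alpha)=\emptyset$), and (ii) uses the Claim again to place $x$ within $D+C$ of the geodesic $[\tilde y,y]$, finishing with the nearest-point inequality $d(x,x')\ge d(y,x')$ to get $d(x,y)\le 2(D+C)$. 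Some substitute for this --- uniform closeness of subpaths to geodesics plus the straddling argument --- is needed; without it your outline does not produce a contraction constant depending only on the quasi-geodesic and contraction constants of $X$.
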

\begin{proof}
Except Assertion (3),  the others are straightforward applications of contracting property. The (3) for geodesics in CAT(0) spaces can be found in \cite[Lemma 3.2]{BF2}; here we  provide a proof in this general setting.  

Assume that $\gamma:=X$ is a $C$-contracting $c$-quasi-geodesic   for some $c, C>0$.  We first observe the following.
\begin{claim}
There exists $D=D(c, C)>0$ such that any subpath $\alpha$ of $\gamma$ has at most a Hausdorff distance $D$ to a geodesic $\tilde \alpha$ with $d(\alpha_-, \tilde \alpha_-), d(\alpha_-, \tilde\alpha_-)\le 2C$.
\end{claim}
\begin{proof}[Proof of the claim]
Indeed,  by the quasi-convexity (\ref{qconvexity}), there exists $\sigma=\sigma(C)>0$ such that $\tilde \alpha\subset N_\sigma(\gamma)$. We shall only prove $\tilde \alpha \subset N_D(\alpha)$; the other inclusion follows from this one by a standard argument using the connectedness of $\tilde \alpha$. 

Without loss of generality, we prove $\tilde \alpha \subset   N_{\sigma}(\alpha)$ by assuming that $\len(\alpha)>c(2\sigma+1).$ Argue by contradiction. If $\tilde \alpha \subsetneq   N_{\sigma}(\alpha)$, there exists a non-empty open (connected) interval $I$ in $\tilde \alpha$ such that $I \cap N_{\sigma}(\alpha)=\emptyset$. 
By the fact $\tilde \alpha\subset N_\sigma(\gamma)$, we must have $I \subset N_{\sigma}([\gamma_-, \alpha_+]_\gamma) \cup N_{\sigma}([\alpha_-, \gamma_+]_\gamma).$ The connectivity of $I$ then implies that $I \cap N_{\sigma}([\gamma_-, \alpha_-]_\gamma) \cap N_{\sigma}([\alpha_+, \gamma_+]_\gamma)\ne \emptyset$: there exists $x\in I$ such that $d(x, [\gamma_-, \alpha_-]_\gamma), d(x, [\alpha_+, \gamma_+]_\gamma) \le \sigma$. As a consequence, $\alpha$ is contained in a $c$-quasi-geodesic with two endpoints within a $2\sigma$-distance, so gives that $\len(\alpha)\le c(2\sigma+1).$ This is a contradiction with $\len(\alpha)>c(2\sigma+1)$, so $\tilde \alpha \subset   N_{\sigma}(\alpha)$ is proved.   The proof of the claim is thus finished.
\end{proof}  

We now prove that $\alpha$ is $2(D+C)$-contracting. Consider a geodesic $\beta$ such that $\beta\cap N_D(\alpha)=\emptyset$. Let $x, y \in \pi_\alpha(\beta)$ be  projection points of $\tilde x, \tilde y \in \beta$ respectively such that $d(x, y)=\proj_\alpha(\beta)$. The goal is to show that $d(x, y)\le 2(D+C)$.
   
Without loss of generality, assume that the projection $\pi_\gamma(\beta)$ is not entirely contained in $\alpha$; otherwise, the contracting property of $\alpha$ would follow from the one of $\gamma$.  For definiteness,  assume that there exists a point $z\in \pi_\gamma(\beta)$ lies  \textit{on the left side} of $\alpha_-$: $z \in [\gamma_-, \alpha_-)_\gamma$; and by switching $x, y$ if necessary, assume further that $x$ is on the left of $y$. 

Let $w \in \pi_\gamma(\tilde y)$ be a  project point of $\tilde y$ to $\gamma$. The contracting property of $\gamma$ implies  $d(w, [\tilde y, y])\le 2C$. Noticing that $w, y\in \alpha$, we deduce from  the \textbf{Claim} above  that $[w, y]_\gamma$ is contained in a $D$-neighborhood of $[y, \tilde y]$.

We observe that $w\in N_C([\gamma_-, \alpha_-]_\gamma)$. Indeed, to derive a contradiction, assume that $w$ lies on $[\alpha_-, \gamma_+]_\gamma$ and $d(w, \alpha_-)>C$. Noting that $z$ lies on $[\gamma_-, \alpha_-)_\gamma$ and $z\in \pi_\gamma(\beta)$, we have $d(z, w)\ge d(w, \alpha_-)>C$. By the contracting property, the points $z, w\in \pi_\gamma(\beta)$ with $d(z, w)>C$  implies $N_C(\gamma)\cap \beta\ne \emptyset$. Moreover,   a projection argument shows that $d(z, \beta), d(w, \beta)\le 2C$. So by the \textbf{Claim} above, $[z, w]_\gamma$ lies within the $D$-neighborhood of $\beta$. Since $\alpha_-\in [z, w]_\gamma$, we get $N_D(\alpha)\cap \beta \ne \emptyset$, a contradiction with the hypothesis. 

It is therefore proved that  either $w$ is on the left of $\alpha_-$, or $d(w, \alpha_-)\le C$. Recall that $x \in [\alpha_-, y]_\alpha$, and $[w, y]_\gamma \subset N_D([y, \tilde y])$ as proved above. Examining the aformentioned relative position   of $w$ to  $\alpha_-$, we then obtain that in the former case, $d(x, [\tilde y, y])\le D$ and in the later case, $d(x, [\tilde y, y])\le D+C$.  In both cases, let $x'\in [\tilde y, y]$ so that $d(x, x')\le D+C$. 

To conclude the proof, we verify that $d(x, y)\le 2(D+C)$: since $y$ is a shortest point on $\alpha$ to $\tilde y$, we have $d(x, x')\ge d(y, x')$. Hence, $d(x, y)\le d(x, x')+d(x', y) \le 2(D+C),$ proving  that $\alpha$ is $2(D+C)$-contracting. 
\end{proof}

A  contracting system has a \textit{$\mathcal R$-bounded intersection} property for a function $\mathcal R: \mathbb R_{\ge 0}\to \mathbb R_{\ge 0}$ if the
following holds
$$\forall X\ne X'\in \mathbb X: \;\diam{N_r (X) \cap N_r (X')} \le \mathcal R(r)$$
for any $r \geq 0$. This property is, in fact, equivalent to a \textit{bounded intersection
property} of $\mathbb X$:  there exists a constant $B>0$ such that the
following holds
$$\proj_{X'}(X) \le B$$
for $X\ne X' \in \mathbb X$. See \cite{YANG6} for further discussions.

Recall that $G$ acts properly by isometry on a geodesic metric space $(\mathrm Y, d)$.   An  element $h \in G$ is called  
\textit{contracting} if the orbit $\langle h \rangle\cdot o$ is contracting, and the orbital map
\begin{equation}\label{QIEmbed}
n\in \mathbb Z\to h^no \in \mathrm Y
\end{equation}
is a quasi-isometric embedding.  The set of contracting elements is preserved under conjugacy.

Given a contracting element $h$, there exists a maximal elementary group $E(h)$ containing $\langle h \rangle$ as a finite index subgroup. Precisely, 
$$
E(h)=\{g\in G: \exists n > 0,\; (gh^ng^{-1}=h^n)\; \lor\;  (gh^ng^{-1}=h^{-n})\}.
$$

In what follows, the contracting subset 
\begin{equation}\label{axisdefn}
\ax(h)=\{f \cdot o: f\in E(h)\}
\end{equation} shall be called the \textit{axis} of $h$.  Hence, the collection $\{g \ax(h): g\in G\}$ is a contracting system with bounded intersection.  

Two  contracting elements $h_1, h_2\in G$  are called \textit{independent} if the collection $\{g\ax(h_i): g\in G;\ i=1, 2\}$ is a contracting system with bounded intersection.

\subsection{Admissible paths}
The notion of an admissible path is defined relative to   a  contracting system $\mathbb X$ in $\mathrm Y$. Roughly speaking, an admissible path can be thought of as a
concatenation of quasi-geodesics which travels alternatively near
contracting subsets and leave them in an orthogonal way. 


\begin{defn}[Admissible Path]\label{AdmDef}
Given $D, \tau \ge 0$ and a function $\mathcal R: \mathbb R_{\ge 0} \to \mathbb R_{\ge 0}$, a path $\gamma$  is called \textit{$(D,
\tau)$-admissible} in  $\mathrm Y$, if  the path $\gamma$ contains a sequence of disjoint geodesic subpaths $p_i$ $(0\le i\le n)$ in this order, each associated to a contracting subset $X_i \in \mathbb X$, with the following   called \textit{Long Local} and \textit{Bounded Projection} properties:
\begin{enumerate}

\item[\textbf{(LL1)}]
Each $p_i$ has length bigger than  $D$, except that  $(p_i)_- =\gamma_-$ or $(p_i)_+=\gamma_+$,

\item[\textbf{(BP)}]
For each $X_i$,  we have 
$$
\proj_{X_i}((p_{i})_+,(p_{i+1})_-)\le \tau
$$
and 
$$
\proj_{X_i}((p_{i-1})_+, (p_{i})_-)\le \tau
$$
when $(p_{-1})_+:=\gamma_-$ and $(p_{n+1})_-:=\gamma_+$ by convention.

\item[\textbf{(LL2)}]
Either $X_i, X_{i+1}$ has $\mathcal R$-bounded intersection or $d((p_i)_+, (p_{i+1})_-)>D$,


\end{enumerate}
\paragraph{\textbf{Saturation}} The collection of $X_i \in \mathbb X$ indexed as above, denoted by $\mathbb X(\gamma)$, will be referred to as contracting subsets for $\gamma$. The union of all $X_i \in \mathbb X(\gamma)$ is called the \textit{saturation} of $\gamma$. 
\end{defn}

The set of endpoints of $p_i$ shall be refered to as the \textit{vertex set} of $\gamma$. We call $(p_{i})_-$ and $(p_{i})_+$ the corresponding \textit{entry vertex} and \textit{exit vertex}   of $\gamma$ in $X_i$. (compare with entry and exit points in subSection \ref{ConvSection})


By definition, a sequence of points $x_i$ in a path $\alpha$   is called \textit{linearly ordered} if $x_{i+1}\in [x_i, \alpha_+]_\alpha$ for each $i$. 

\begin{defn}[Fellow travel]\label{Fellow}
Assume that $\gamma = p_0 q_1 p_1 \cdots q_n p_n$ is a $(D, \tau)$-admissible
path, where each $p_i$ has two endpoints in $X_i \in \mathbb X$. The paths $p_0, p_n$ could be trivial. 

Let
$\alpha$ be a path such that $\alpha_- = \gamma_-,
\alpha_+=\gamma_+$. Given $\epsilon >0$, the path $\alpha$ \textit{$\epsilon$-fellow travels} $\gamma$ if there exists a sequence of linearly ordered points $z_i,
w_i$ ($0 \le i \le n$) on $\alpha$ such that  
$d(z_i, (p_{i})_-) \le \epsilon, \;d(w_i, (p_{i})_+) \le \epsilon.$
\end{defn}

The basic fact  is that a ``long" admissible path is a quasi-geodesic.
 
\begin{prop}\label{admissible} 
Let $C$ be the contraction constant of $\mathbb X$. For any $\tau>0$, there are constants $B=B(\tau), D=D(\tau), \epsilon = \epsilon(\tau), c = c(\tau)>0$ such that the following
holds.

Let $\gamma$ be a $(D, \tau)$-admissible path and $\alpha$ a geodesic  between $\gamma_-$ and
$\gamma_+$. Then
\begin{enumerate}
\item
For    a contracting subset $X_i \in \mathbb X(\gamma)$ with $0 \le i \le
n$,
$$\proj_{X_i}(\beta_1) \le B,\; \proj_{X_i}(\beta_2)  \le B$$ where $\beta_1 =[\gamma_-, (p_i)_-]_\gamma, \beta_2 =[(p_i)_+, \gamma_+]_\gamma$. 
\item
$\alpha \cap N_C(X) \ne \emptyset$ for every $X\in \mathbb X(\gamma)$.
\item
$\alpha$  $\epsilon$-fellow travels $\gamma$. In particular,  $\gamma$ is a $c$-quasi-geodesic.
\end{enumerate}
\end{prop}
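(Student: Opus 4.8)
The plan is to induct on the number $n$ of contracting subsets in $\mathbb X(\gamma)$, establishing all three conclusions simultaneously by a bootstrapping argument. First I would fix $\tau$ and choose the constants in the right order: set $B = B(\tau)$ large (depending on $C$, $\tau$, and the bounded-intersection function $\mathcal R$ via $\mathcal R(C)$), then $D = D(\tau)$ much larger than $B$, $C$, and $\mathcal R(C)$, then $\epsilon = \epsilon(\tau)$ of order $C + B$, and finally $c = c(\tau)$ depending on all of these. The base case $n=0$ (or $n=1$ with trivial $p_0,p_n$) is just the statement that a geodesic is a quasi-geodesic, so the content is entirely in the inductive step.

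For the inductive step I would proceed as follows. Consider the ``middle'' contracting subset $X_i$ — say one that is not at either end — and look at the geodesic $\alpha = [\gamma_-,\gamma_+]$. The key claim is assertion (2) for $X_i$: that $\alpha$ passes within $C$ of $X_i$. Suppose not; then $d(\alpha, X_i) \ge C$, so by the contracting property $\proj_{X_i}(\alpha) \le C$. On the other hand I would show that the two endpoints $\gamma_-,\gamma_+$ project far apart on $X_i$: the subpath $\beta_1 = [\gamma_-,(p_i)_-]_\gamma$ has bounded projection onto $X_i$ (this is essentially assertion (1), proved by a secondary induction/telescoping using the \textbf{(BP)} condition together with the bounded-intersection property \textbf{(LL2)} — each earlier $X_j$ contributes at most $\mathcal R(C)$ or $\tau$ to the projection of $\beta_1$ onto $X_i$, and these sum geometrically because consecutive pieces are long, of length $> D$), and similarly for $\beta_2$. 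Since $p_i$ itself has length $> D$ and lies in $X_i$, the points $(p_i)_-$ and $(p_i)_+$ project onto $X_i$ at distance $> D - (\text{small error})$. Combining, $\proj_{X_i}(\gamma_-,\gamma_+) \ge D - 2B - O(\tau) > C$, contradicting $\proj_{X_i}(\alpha) \le C$ once $D$ is chosen large enough. Hence $\alpha \cap N_C(X_i) \ne \emptyset$, giving (2); and the same computation bounds the projections in (1).

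Once (2) holds, I would extract from $\alpha \cap N_C(X_i) \ne \emptyset$ an entry point $u_i$ and exit point $v_i$ of $\alpha$ into $N_C(X_i)$ (for each $i$), show via the bounded-projection/quasi-convexity package (Proposition \ref{Contractions}) that $d(u_i, (p_i)_-) \le \epsilon$ and $d(v_i, (p_i)_+) \le \epsilon$, and that along $\alpha$ these points appear in the correct linear order $u_0, v_0, u_1, v_1, \dots$ — the linear ordering is where one again invokes the bounded-intersection condition \textbf{(LL2)} to rule out backtracking. This gives assertion (3), the $\epsilon$-fellow-travelling; and then the quasi-geodesic constant $c$ follows by comparing $\len(\gamma)$ to $d(\gamma_-,\gamma_+)$: break $\gamma$ at its vertices, use fellow-travelling to see that the portion of $\gamma$ between consecutive $\epsilon$-balls has length comparable to the corresponding portion of $\alpha$, and sum up (the subpaths $q_j$ joining consecutive $X$'s are quasi-geodesics of controlled length by \textbf{(BP)}).

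The main obstacle I expect is the telescoping estimate for assertion (1): one must show that the accumulated projection of the ``prefix'' $\beta_1$ onto a single $X_i$ stays bounded by $B$ even as $n\to\infty$. This requires carefully exploiting that between any two contracting subsets the path travels a distance $> D$ (or the subsets have $\mathcal R$-bounded intersection), so that the contracting property forces the projection contributions of successive $X_j$'s onto $X_i$ to not merely stay bounded but effectively telescope — a ``no backtracking'' phenomenon. Getting the constants to close up (choosing $D$ large relative to $B$, $\mathcal R(C)$, $\tau$ while $B$ only depends on $\tau$ and the fixed data $C,\mathcal R$) is the delicate bookkeeping at the heart of the argument, and is exactly the point where the three assertions must be proved together rather than in sequence.
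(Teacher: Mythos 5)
The paper itself does not prove this proposition — it is imported from \cite{YANG10} — but your plan reproduces the standard argument there: establish the bounded-projection estimate (1) for prefixes and suffixes by induction along the admissible path using \textbf{(BP)} and \textbf{(LL2)}, deduce (2) by noting that $\proj_{X_i}(\gamma_-,\gamma_+)\ge D-2B-O(\tau)>C$ would contradict the contracting property if $\alpha$ missed $N_C(X_i)$, and then read off the fellow-travelling and the quasi-geodesic constant. Your one loose phrase is that the projection contributions ``sum geometrically'' — taken literally a sum over the earlier $X_j$'s would grow with $n$, and the correct mechanism is the non-accumulation (no-backtracking) you describe immediately afterwards — but since you flag exactly this telescoping as the crux and order the constants correctly, the proposal is sound and follows essentially the same route as the cited proof.
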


 The main use of this lemma (the second statement) is to construct the following type of paths in verifying that an element is contracting.

\begin{defn}\label{uniformadmissible}
Let $L, \Delta>0$. With notations in definition of a $(D, \tau)$-admissible path $\gamma$, if the following holds $$|d((p_{i+1})_-, (p_{i})_+)-L|\le \Delta$$ for each $i$, we say that $\gamma$ is a \textit{$(D, \tau, L, \Delta)$-admissible} path.  
\end{defn}

 \begin{prop}\label{saturation}
Assume that $\mathbb X$ has bounded intersection in   admissible paths considered in the following statements.
For any $\tau>0$ there exists $D=D(\tau)>0$ with the following properties.
\begin{enumerate}
\item
For any $L,\Delta>0$, there exists $C=C(L, \Delta)>0$ such that the saturation of a $(D, \tau, L, \Delta)$-admissible path is  $C$-contracting.  
\item
For any $L,\Delta, K>0$, there exists $C=C(L, \Delta, K)>0$  such that if the entry and exit vertices of a $(D, \tau, L, \Delta)$-admissible path $\gamma$ in each $X\in \mathbb X(\gamma)$ has distance bounded above by $K$, then $\gamma$ is $C$-contracting.
\end{enumerate}
\end{prop}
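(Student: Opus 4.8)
The plan is to deduce both assertions from Proposition~\ref{admissible} by showing that a geodesic between two far-apart points of the plane being tested must pass near the saturation, hence near one of the contracting subsets $X_i\in\mathbb X(\gamma)$, which is itself contracting. First I would fix $\tau>0$ and let $B=B(\tau)$, $D=D(\tau)$, $\epsilon=\epsilon(\tau)$, $c=c(\tau)$ be the constants produced by Proposition~\ref{admissible}; these also govern the admissibility of all the $(D,\tau,L,\Delta)$-admissible paths $\gamma$ we consider. The key geometric input is that, for a $(D,\tau,L,\Delta)$-admissible path with $L$ controlling the gap lengths, the consecutive pieces $p_i$ are \emph{uniformly} spaced along $\gamma$: by Proposition~\ref{admissible}(3), $\gamma$ $\epsilon$-fellow travels any geodesic $\alpha=[\gamma_-,\gamma_+]$, so the entry/exit vertices of $\gamma$ appear in linear order along $\alpha$ within distance $\epsilon$, and consecutive ones are separated by roughly $L$ (up to $\Delta$ and $c,\epsilon$). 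This gives a ``net'' of contracting subsets at controlled scale along any geodesic with endpoints on the saturation.

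For assertion~(1): let $Y:=\bigcup_i X_i$ be the saturation of $\gamma$. I want to show $Y$ is $C$-contracting for $C=C(L,\Delta)$. Take a geodesic $\beta$ with $d(\beta,Y)\ge C$; I must bound $\proj_Y(\beta)$. Suppose $x\in\pi_{X_i}(\beta)$ and $y\in\pi_{X_j}(\beta)$ with $i\le j$ realizing (nearly) the diameter of $\pi_Y(\beta)$. Connect $x$ to $y$ by the subpath $[x, (p_i)_+]_{X_i}\cdot[(p_i)_+,(p_{i+1})_-]\cdot\cdots\cdot[(p_j)_-,y]_{X_j}$ of $\gamma$; this is still a $(D,\tau,L,\Delta)$-admissible subpath (admissibility and the gap bound are inherited by subpaths obtained by truncating the first and last geodesic pieces), hence a $c$-quasi-geodesic, and its saturation $\{X_i,\dots,X_j\}\subset Y$. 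If $\beta$ stays $C$-far from all of $X_i,\dots,X_j$ with $C$ large relative to $\sigma(c), \epsilon$, then $\beta$ is $C$-far from this quasi-geodesic, yet its two endpoints project onto $x$ and $y$; I then use the $C'$-contracting property of the individual sets $X_i$ together with the bounded-projection property~(BP) to telescope: each $X_k$ contributes at most $O(C')$ and $O(\tau)$ to the projection unless $\beta$ actually comes $C'$-close to it, which is excluded. More efficiently, I would argue that the projection of $x$ onto $X_j$ (a single contracting set) is controlled, walk the projection $\pi_{X_j}$ along the admissible path from $X_i$ to $X_j$ using Proposition~\ref{admissible}(1) applied to the subpath (giving $\proj_{X_j}$ of the earlier portion $\le B$), and conclude $d(x,y)\le B + (\text{diam within }X_j) \le B + C'$; since this holds for all pairs, $\proj_Y(\beta)\le C$ for suitable $C=C(L,\Delta)$.

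For assertion~(2): now the additional hypothesis is that in each $X\in\mathbb X(\gamma)$ the entry and exit vertices are within distance $K$. I want $\gamma$ itself (not just its saturation) to be $C$-contracting with $C=C(L,\Delta,K)$. The point is that $\gamma$ has finite Hausdorff distance from its saturation $Y$, quantitatively bounded in terms of $K$ (and $L,\Delta$): every point of $\gamma$ lies on some geodesic piece $p_i\subset N_K$-ish of $X_i\subset Y$ or on a gap of length $\le L+\Delta$ between two points of $Y$, hence within $\frac{L+\Delta}{2}+O(1)$ of $Y$; conversely every point of $X_i$ used is within $K$ of the segment $[(p_i)_-,(p_i)_+]\subset\gamma$ — wait, that last inclusion needs the $K$-bound precisely, since $X_i$ could a priori be a long contracting set but only the portion between the entry and exit vertices (length $\le K$) is ``seen''; one restricts $Y$ to these seen portions $X_i':=[(p_i)_-,(p_i)_+]_{X_i}$, which by Proposition~\ref{Contractions}(3) (subpaths) are still contracting, their union $Y'$ is within finite Hausdorff distance $\le K + (L+\Delta)/2 + O(1)$ of $\gamma$, and by part~(1) applied to the $X_i'$ (or directly) $Y'$ is contracting; then Proposition~\ref{Contractions}(2) (finite neighborhood) upgrades this to: $\gamma$ is contracting with constant depending on $L,\Delta,K$. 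The main obstacle I anticipate is the telescoping estimate in part~(1): carefully bounding $\proj_Y(\beta)$ requires controlling how the nearest-point projection onto one $X_j$ behaves as one moves along the chain of admissible pieces, and making sure the constants $B,\sigma(c),C'$ do not accumulate over the (a priori unboundedly many) pieces $X_i,\dots,X_j$ — this is where one must exploit that $\beta$ being $C$-far from \emph{all} of them, combined with (BP) and (LL2), forces the projections of all intermediate pieces to collapse to a bounded set, so that only the two extreme pieces $X_i, X_j$ contribute.
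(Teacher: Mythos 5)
Since the paper only quotes this proposition from \cite{YANG10} without reproducing its proof, I can only judge your proposal on its own terms, and it has a genuine gap at the heart of part (1). Your ``more efficient'' conclusion $d(x,y)\le B+C'$ cannot be right: the constants $B,C'$ do not involve $L$, yet the contraction constant of the saturation must grow with $L$ --- already for two consecutive sets $X_i, X_{i+1}$ joined by a gap of length about $L$, a geodesic $\beta$ far from the saturation but crossing the ``bisector'' of the gap has points projecting near $(p_i)_+$ and points projecting near $(p_{i+1})_-$, so $\proj_Y(\beta)$ is genuinely of order $L+\Delta$. Proposition \ref{admissible}(1) bounds the projection onto $X_j$ of the earlier portion of the \emph{admissible path}, which says nothing about $d(x,y)$. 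The step you yourself flag as ``the main obstacle'' is exactly the missing proof: one must show that the nearest points $x\in X_i$, $y\in X_j$ of $z,z'\in\beta$ cannot lie in far-apart pieces. The standard mechanism is to project the chain $\{x\}\cup[x,z]\cup[z,z']_\beta\cup[z',y]\cup\{y\}$ onto an intermediate $X_k$ ($i<k<j$): since $x$ (resp.\ $y$) is a nearest point of $Y$ to every point of $[z,x]$ (resp.\ $[z',y]$), these geodesics meet $N_{C}(X_k)$ only within distance $C$ of their endpoints, so together with the contraction of $X_k$, the $1$-Lipschitz projection, and bounded projection between distinct members of $\mathbb X$, the whole chain has uniformly bounded projection onto $X_k$; on the other hand (LL1) plus Proposition \ref{admissible}(1) applied to the truncated admissible path force $\proj_{X_k}(x,y)\ge D-2B$, a contradiction for $D=D(\tau)$ large. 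This rules out intermediate pieces, and the same projection estimate applied to $X_i$ and $X_j$ themselves (using (BP) and bounded intersection to locate $\pi_{X_i}(y)$ near $(p_i)_+$, etc.) pins $x$ near $(p_i)_+$ and $y$ near $(p_{i+1})_-$, yielding $d(x,y)\preceq L+\Delta$. None of this is in your text; without it the ``telescoping'' is only a hope that constants do not accumulate.

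Part (2) also needs repair. Your reduction to the truncated sets $X_i'$ and a Hausdorff-distance argument is the right instinct, but ``by part (1) applied to the $X_i'$'' is not licit: the $X_i'$ are not members of $\mathbb X$, the (BP) and bounded-intersection constants for the truncated system degrade with $K$, which would make the threshold $D$ depend on $K$ against the requirement $D=D(\tau)$; and, more substantively, the hypothesis $d(\beta,Y')\ge C$ for the truncated saturation no longer implies that $\beta$ is far from the full $X_k$, which is precisely what made $\proj_{X_k}(\beta)\le C$ available in the part-(1) argument. So the part-(1) mechanism must be re-run with additional care (controlling excursions of $\beta$ and of the nearest-point geodesics along the portions of the axes $X_k$ beyond their entry/exit vertices), not merely cited. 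As written, the proposal is a plausible plan whose decisive estimates are asserted rather than proved, and one of the asserted estimates is false as stated.
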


\subsection{Statistically convex-cocompact actions and growth-tightness theorem}\label{SSSCC}
In this subsection, we recall the definition of statistically convex-cocompact actions, which is understood as a statistical version of convex-cocompact actions.     By abuse of language, a geodesic between two sets $A$ and $B$ is a geodesic $[a, b]$ between $a\in A$ and $b\in B$.

 Given constants $0\le M_1\le M_2$, the \textit{a concave region}   $\mathcal O_{M_1, M_2}$ consists of the set  of elements $g\in G$ such that there exists some geodesic $\gamma$ between $B(o, M_2)$ and $B(go, M_2)$ with the property that the interior of $\gamma$ lies outside $N_{M_1}(Go)$.

\begin{defn}[statistically convex-cocompact action]\label{StatConvex}
If there exist two positive constants  $M_1, M_2>0$ such that $\e {\mathcal O_{M_1, M_2}} < \e G$, then the action of $G$ on $\mathrm Y$ is called \textit{statistically convex-cocompact (SCC)}.  

\end{defn} 
 
We are interested in the following important examples of SCC actions:
\begin{examples}
\begin{enumerate}
\item
Any proper and cocompact group action on a geodesic metric space. 
 
\item
The action of relatively hyperbolic groups with parabolic gap property on a hyperbolic space (cf. \cite{DOP}).
\item
The action of mapping class groups on Teichm\"{u}ller spaces is SCC (cf. \cite{ACTao}).  
\end{enumerate}
\end{examples}

In applications, since $\mathcal O_{M_2, M_2} \subset \mathcal O_{M_1, M_2}$, we can assume that $M_1=M_2$ and henceforth, denote $\mathcal O_M:=\mathcal O_{M, M}$ for easy notations.  

We remark that the definition of a SCC action is independent of the choice of basepoint $o$, when there exists a contracting element. Namely, for any basepoint $o$, there exist $M_1, M_2>0$ such that $\e {\mathcal O_{M_1, M_2}} < \e G$. See Lemma 6.1 in \cite{YANG10}.

By definition, the union of two growth-tight (resp.  growth-negligible) sets is  growth-tight (resp. growth-negligible).  
The main result of this section shall provide a class of  growth-tight sets. These growth-tight sets are closely related to a notion of a barrier we are going to introduce now.

\begin{defn}\label{barriers}
Fix constants $\epsilon, M>0$ and a set $P$ in $G$. \begin{enumerate}
\item(Barrier/Barrier-free geodesic)
Given $\epsilon>0$ and $f\in P$, we say that a geodesic $\gamma$ contains an \textit{$(\epsilon, f)$-barrier}   if there exists    a element $h \in G$ so that 
\begin{equation}\label{barrierEQ}
\max\{d(h\cdot o, \gamma), \; d(h\cdot fo, \gamma)\}\le \epsilon.
\end{equation}
If 
 no such $h \in G$ exists so that (\ref{barrierEQ}) holds, then $\gamma$ is called \textit{$(\epsilon, f)$-barrier-free}.  

 Generally, we say $\gamma$ is \textit{$(\epsilon, P)$-barrier-free} if it is $(\epsilon,f)$-barrier-free for some $f\in P$. An obvious fact is that  any subsegment of $\gamma$ is also $(\epsilon, P)$-barrierr-free.
\item(Barrier-free element)
An element $g\in G$ is \textit{$(\epsilon, M, P)$-barrier-free} if there exists  an $(\epsilon, P)$-barrier-free geodesic between $B(o, M)$ and $B(go, M)$. 
 
\end{enumerate}
\end{defn}
 
The following result is proved in \cite[Theorem C]{YANG10}. We remark that the constant  $M$ can be chosen as big as necessary, which is guaranteed by Lemma 6.1 in \cite{YANG10}. 
\begin{thm}[Growth tightness] \label{GrowthTightThm}
Suppose that $G$ has an SCC action on a geodesic space $(\mathrm Y, d)$ with a contracting element. Then there exist  constants $\epsilon, M>0$ such that for any given $g \in G$, we have
$\mathcal V_{\epsilon, M, g}$ is growth-tight. If the action is proper, then $\mathcal V_{\epsilon, M, g}$ is growth-negligible.   
  
\end{thm}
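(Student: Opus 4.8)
The plan is to derive the strict inequality $\e{\mathcal V_{\epsilon, M, g}} < \e G$ (resp.\ the estimate $\sharp(\mathcal V_{\epsilon,M,g}\cap N(o,n)) = o\!\big(e^{\e G n}\big)$ for proper actions) by pairing a counting inequality produced by a \emph{barrier decomposition} of arbitrary group elements with the statistical-convexity hypothesis, which is exactly what forces the gap to be strict. I fix $\epsilon>0$ and take $M$ as large as needed; by Lemma~6.1 of \cite{YANG10} (quoted just before the statement above) it suffices to treat one large value of $M$, and in fact only the smaller set $V:=\{w\in G:[o,wo]\text{ is }(\epsilon,g)\text{-barrier-free}\}$. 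I would dispose of the case $d(o,go)\le 4\epsilon$ at once --- there an $(\epsilon,g)$-barrier merely records a single near-orbit point, so $V$ is contained in a concave region $\mathcal O_{M'}$ and the conclusion follows from SCC --- and present the main case with $g=c^N$ a high power of a contracting element $c$, which is what arises in all the applications of \textsection\ref{SSgenericity}; the general case runs along the same lines with the translates $\{h\ax(c)\}$ replaced by translates of the $\langle g\rangle$-orbit.

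\textbf{Barrier decomposition.} Given $w\in G$, subdivide a geodesic $\gamma=[o,wo]$ at a maximal family of disjoint $(\epsilon,g)$-barriers read off greedily from $\gamma_-$; the complementary sub-geodesics are then $(\epsilon,g)$-barrier-free, and transporting by the partial products exhibits $w=u_0\,g\,u_1\,g\cdots g\,u_m$ with every $u_i\in V$, where $m=O(d(o,wo)/d(o,go))$ and $\sum_i d(o,u_io)+m\,d(o,go)\le d(o,wo)+O(m\epsilon)$ since $\gamma$ is a genuine geodesic cut into these pieces. Conversely, for \emph{any} tuple $(u_i)\in V^{m+1}$ the alternating concatenation of the geodesics $[o,u_io]$ with copies of $[o,go]$ lying along $\ax(c)$ is a $(D,\tau)$-admissible path in the sense of Definition~\ref{AdmDef} (barrier-freeness of each $u_i$ bounds the projection of $[o,u_io]$ to each translate of $\ax(c)$, giving the Bounded Projection property, while the $[o,go]$-pieces are long because $d(o,go)$ is large; the remaining conditions are a routine check using the bounded intersection of $\{h\ax(c)\}$). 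Hence Proposition~\ref{admissible} makes this concatenation a quasi-geodesic that $\epsilon$-fellow-travels its geodesic, so $d\!\big(o,(u_0g\cdots gu_m)o\big)\ge \sum_i d(o,u_io)+m\big(d(o,go)-K\big)$ for a uniform constant $K$.

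\textbf{Counting, and where SCC enters.} The two halves above say that $(u_i)\mapsto u_0g\cdots gu_m$ is a surjection $\bigsqcup_{m\ge0}V^{m+1}\twoheadrightarrow G$ whose images have displacement bounded below by $\sum_i d(o,u_io)+m\big(d(o,go)-K\big)$; summing $e^{-s\,d(o,\cdot\,o)}$ yields, with $\mathcal P_A(s):=\sum_{a\in A}e^{-s\,d(o,ao)}$,
\[
\mathcal P_G(s)\ \le\ \mathcal P_V(s)\sum_{m\ge0}\big(e^{-s(d(o,go)-K)}\mathcal P_V(s)\big)^m .
\]
Since SCC actions have purely exponential growth \cite{YANG10}, $\mathcal P_G$ diverges at $s=\e G$; this already forces $\e V\le\e G$, but not strictly --- the inequality is consistent with $\mathcal P_V$ being of convergence type at $\e G$ with a thin margin, and this scenario has to be excluded using the finer geometry of $V$. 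The relevant fact is that an $(\epsilon,g)$-barrier-free geodesic with $g=c^N$ can $2\epsilon$-fellow-travel any translate $h\ax(c)$ for a length at most $L_0=O(d(o,go))$, since a longer stretch would contain orbit points $hc^ko$ and $hc^{k+N}o=(hc^k)go$ both $\epsilon$-close to $\gamma$, i.e.\ a $g$-barrier. Together with the contracting property this forces every $w\in V$ to admit a geodesic built of pieces lying outside $N_M(Go)$ alternating with pieces of length $\le L_0$, so that $V$ lies in an $L_0$-thickening of the concave region $\mathcal O_M$ whose critical exponent is still $\e{\mathcal O_M}<\e G$ by SCC; this gives $\e V<\e G$. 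For a general proper action the same scheme applies with ``growth-tight'' replaced by ``growth-negligible'': a concave-type region is growth-negligible in any proper action with a contracting element \cite{YANG10}, and the counting inequality transfers the $o\!\big(e^{\e G n}\big)$ bound to $V$.

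\textbf{Main obstacle.} I expect the delicate step to be the last one: showing that $(\epsilon,g)$-barrier-freeness is, up to bounded-length excursions, as restrictive as staying outside $N_M(Go)$, so that the SCC gap $\e{\mathcal O_M}<\e G$ can be imported --- and making the ``$L_0$-thickening of $\mathcal O_M$'' precise so that its critical exponent is genuinely unchanged. The barrier decomposition and the Poincar\'e-series bookkeeping are the robust, essentially formal skeleton; the geometric input --- that bounded fellow-travelling of a single translated axis is the only way a barrier-free geodesic can meet the orbit --- is what carries the real content.
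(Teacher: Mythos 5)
Your skeleton (barrier decomposition plus Poincar\'e-series bookkeeping) only yields the inequality $\mathcal P_G(s)\le \mathcal P_V(s)\sum_m(\cdots)^m$, which, as you yourself note, is the unhelpful direction: it is a lower-bound statement on the size of $V$ (it says $V$ is rich enough to generate $G$) and is perfectly consistent with $\e V=\e G$. All the content is therefore in your final step, and that step is false. Barrier-freeness with respect to $g=c^N$ only forbids long fellow-travelling of translates $h\ax(c)$ (equivalently of the $\langle g\rangle$-orbit); it in no way forces the geodesic to leave $N_M(Go)$, because the orbit $Go$ is vastly larger than the union of translates of one axis. Concretely, for a proper \emph{cocompact} action (hence SCC) one can take $M$ so large that $\mathcal O_M$ is empty, yet $\mathcal V_{\epsilon,M,g}$ is infinite with positive growth rate --- e.g.\ by Lemma \ref{wqcGTight} it contains infinite-index quasi-convex subgroups, and already in a free group $\langle a,b\rangle$ with $c=a$ the elements $b^n$ are barrier-free and stay entirely in the orbit. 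So $V$ is not contained in any thickening of $\mathcal O_M$, and the SCC gap $\e{\mathcal O_M}<\e G$ cannot be imported this way. A secondary problem is your ``converse'' admissibility check: barrier-freeness of $u_i$ bounds projections onto translates of $\ax(c)$ only by a constant of order $d(o,go)$, which is the same order as the length of the inserted axis pieces, so the constants $D(\tau)$ versus the piece length do not separate and Proposition \ref{admissible} cannot be invoked as a routine matter.

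The paper's proof runs in the opposite direction and this is exactly what produces strictness. One takes a maximal $R$-separated subset $\mathcal B$ of $\mathcal A=\mathcal V_{\epsilon,M,g}$ and builds an \emph{injective} extension map $\Phi:\mathbb W(\mathcal B)\to G$, $\Phi(a_1\cdots a_n)=a_1f_1gf_1'a_2\cdots f_{n-1}gf_{n-1}'a_n$, where the connectors $f_i,f_i'$ come from a \emph{fixed finite} set $F$ supplied by the extension lemma of \cite{YANG10}; this fixed $F$ is what yields uniform admissibility constants independent of $g$ and of the letters $a_i$ (curing the circularity above) and, crucially, injectivity. Injectivity gives a lower bound on $\mathcal P_G$ in terms of geometric series in $\PS{\mathcal B}$, and the critical gap criterion then shows: if the action is merely proper, $\PS{\mathcal B}$ must converge at $s=\e G$ (growth-negligibility); if the action is SCC, one additionally proves that $\PS{\mathcal A}$ diverges at its own critical exponent $\e{\mathcal A}$ --- this is where statistical convexity actually enters, not through any containment in $\mathcal O_M$ --- whence $\e G\ge \e{\Phi(\mathbb W(\mathcal B))}>\e{\mathcal A}$, the strict gap. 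To repair your argument you would need to replace the thickened-concave-region claim by this injection-plus-divergence mechanism (or an equivalent one); as written the proposal does not prove the theorem.
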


We sketch the proof at the convenience of the reader, and refer to \cite[Section 4]{YANG10} for complete details.
 
\begin{proof}[Sketch of the proof] 
 Let $\mathcal B$ be a \textit{maximal $R$-separated} subset in $\mathcal A:=\mathcal V_{\epsilon, M, g}$ so that 
\begin{itemize}
\item
for any distinct $a, a'
\in \mathcal B$,  
$d(a o,   a' o)
> R$, and
\item
for any $x \in \mathcal V_{\epsilon, M, g}$, there
exists $a \in \mathcal B$ such that $d(xo,  ao) \le R$. 
\end{itemize}

Denote by $\mathbb W(\mathcal A)$ the set of all (finite) words over $\mathcal A$.    We defined an \textit{extension map} $\Phi: \mathbb W(A) \to G$ as follows: given a word $W=a_1a_2\cdots a_n \in \mathbb W(A)$, set $$\Phi(W)=a_1 \cdot f_1 g f_1' \cdot a_2 \cdot f_2 g f_2'\cdot \cdots \cdot a_{n-1}\cdot f_{n-1} g f_{n-1}' \cdot a_n \in G,$$ where $f_i, f_i'\in F$ are supplied by the extension lemma in \cite{YANG10} such that $\Phi(W)$ labels a $(D, \tau)$-admissible path. Consider  $X:=\Phi(\mathbb W(\mathcal B))$. The key fact is that $\Phi: \mathbb W(\mathcal B) \to G$ is injective.

Consider the  Poincar\'e series 
$$\PS{\Gamma} = \sum\limits_{g \in \Gamma} \exp(-sd(o, go)), \; s \ge 0,$$
which  diverges for $s<\e \Gamma$ and converges for $s>\e \Gamma$. 
  
Note that $\PS{\mathcal B} \asymp \PS{\mathcal A}$, whenever they are finite, and so $$\e {\mathcal B} = \e {\mathcal A}.$$ 

If the action is assumed to only be proper, then with the critical gap criterion in \cite[Lemma 2.23]{YANG10}, the   injectivity assertion of $\Phi$ implies that $\PS{\mathcal B}$ converges at $s=\e G$: $\mathcal B$ is a growth-negligible set.

If the action is assumed to  be SCC, we were able to prove that $\PS{\mathcal A}$ and thus $\PS{\mathcal B}$
are divergent at $s = \e {\mathcal A}$. Again, by critical gap criterion in \cite{YANG10}, we proved that $\e {X} > \e {\mathcal B}$ and so $\e{G} \ge
\e {X} > \e {\mathcal A}$: $\mathcal A$ is a growth-tight set. 
\end{proof}

\section{Conjugacy classes of non-contracting elements}\label{Section3}
Recall that the \textit{growth rate of conjugacy classes} of  an  infinite set $X$ is defined as 
$$
\ec X=\limsup_{n\to \infty}\frac{\log \sharp \{[g]: g\in X, \ell^o([g])\le n\} }{n}
$$
where $[g]$ denotes the conjugacy class of $g$ in $G$. An element $h$  in $[g]$  is called \textit{minimal} if $d(o, ho)=\ell^o([g])$. 
Denote by $\mathcal {NC}$ the set  of non-contracting elements in $G$ for the action of $G$ on a geodesic metric space $(\mathrm Y, d)$. 

The goal of this section is the following result.

\begin{thm}\label{NCisBarrierfree}
Assume that $G$ admits a proper action on $\mathrm Y$. Fix a constant $\epsilon>0$ and a contracting element $f\in G$. For any sufficiently large $M>0$, there exists an integer $n>0$ such that each element $g\in \mathcal NC$ admits a minimal conjugacy representative in $\mathcal V_{\epsilon, M, f^n}.$ 
\end{thm}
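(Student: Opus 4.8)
The plan is to argue by contraposition: I will fix the constants and show that if some $g$ admits \emph{no} minimal conjugacy representative lying in $\mathcal V_{\epsilon,M,f^n}$, then $g$ must be a contracting element, so that $g\notin\mathcal{NC}$. Set $\mathbb X=\{u\cdot\ax(f):u\in G\}$; by the discussion of axes in \textsection\ref{Section2} this is a $C_0$-contracting system with bounded intersection, with bounded-intersection function $\mathcal R$. Fix $\tau>0$ and let $D=D(\tau)$, $\epsilon_1=\epsilon(\tau)$, $c=c(\tau)$ be the constants produced by Proposition~\ref{admissible}, together with the contraction constants of Proposition~\ref{saturation}. Finally choose the integer $n$ so large that $\ell_n:=d(o,f^no)$ dwarfs all of $D,C_0,\epsilon,\epsilon_1$ and the relevant values of $\mathcal R$, and choose $M$ at least the constant demanded by Theorem~\ref{GrowthTightThm} (so that $\mathcal V_{\epsilon,M,f^n}$ is genuinely growth-tight) plus a fixed additive error depending on the constants above. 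I treat infinite-order $g$; torsion elements require a separate standard argument (the orbit $\langle h\rangle o$ is bounded, so the periodic path built below closes up), which I omit from this sketch.

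\textbf{Step 1 (a barrier forces a long contracting stretch).} Let $h$ be a minimal representative of $[g]$, so $d(o,ho)=\ell^o([g])$ and, by minimality over the conjugates $khk^{-1}$, $d(x,hx)\ge d(o,ho)$ for every orbit point $x\in Go$. If the geodesic $\gamma=[o,ho]$ were $(\epsilon,f^n)$-barrier-free it would witness $h\in\mathcal V_{\epsilon,M,f^n}$ and we would be done; so $\gamma$ contains an $(\epsilon,f^n)$-barrier with some witness $t_0\in G$, i.e.\ $d(t_0o,\gamma)\le\epsilon$ and $d(t_0f^no,\gamma)\le\epsilon$. Since $t_0o,t_0f^no\in t_0\ax(f)=:X_0$ and $X_0$ is $C_0$-contracting and quasi-convex, the \emph{maximal} subpath $\sigma=[a_0,b_0]_\gamma$ of $\gamma$ lying in $N_{C_0}(X_0)$ has $\mathrm{len}(\sigma)\ge\ell_n-O(\epsilon+C_0)>D$, fellow-travels the arc of $t_0\langle f\rangle o$ between $t_0o$ and $t_0f^no$, and $[o,a_0]_\gamma$, $[b_0,ho]_\gamma$ both lie outside $N_{C_0}(X_0)$, so each projects within $C_0$ to $X_0$. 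In particular $d(o,ho)\ge\mathrm{len}(\sigma)\ge\ell_n-O(\epsilon+C_0)$, so we may also assume $\ell^o([g])$ is very large (otherwise no barrier fits into $\gamma$ and we are done).

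\textbf{Step 2 (pass to a periodic path and conclude).} First, a displacement computation: for the point $p=t_0o$ one has $d(p,hp)\le d(p,a_0)+d(a_0,ho)+d(ho,ha_0)+d(ha_0,hp)$, and using that $[a_0,ho]_\gamma$ is a subsegment of $\gamma$ while $[ho,ha_0]_{h\gamma}$ is a subsegment of $h\gamma$ whose lengths sum to $d(o,ho)$, this gives $d(p,hp)\le d(o,ho)+O(\epsilon+C_0)$; combined with the minimality lower bound $d(p,hp)\ge d(o,ho)$, the concatenation $[a_0,ho]_\gamma\cdot[ho,ha_0]_{h\gamma}$ is an almost-geodesic (its length exceeds $d(a_0,ha_0)$ by a bounded amount). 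Conjugating by $t_0$ produces $h'=t_0^{-1}ht_0$ (still a conjugate of $g$), whose geodesic $[o,h'o]$ is an isometric copy of this almost-geodesic and therefore begins at $o$ with a stretch of length $\ge\ell_n-O(\cdot)$ fellow-travelling $\ax(f)$ itself. Now form the bi-infinite $h'$-periodic path $\mathbf P=\bigcup_{k\in\mathbb Z}h'^k[o,h'o]$. In the $k$-th period it fellow-travels the contracting set $Y_k:=h'^k\ax(f)$ for length $\ge\ell_n-O(\cdot)$ and it contains the orbit point $h'^ko\in Y_k$ at the start of that stretch; the $Y_k$ are pairwise distinct (if $Y_k=Y_{k+1}$ then $h'\in\mathrm{Stab}(\ax(f))=E(f)$, forcing $h'$, hence $g$, contracting) and so have $\mathcal R$-bounded intersection, while the arc of $\mathbf P$ joining consecutive stretches has bounded length-variation, so $\mathbf P$ is, up to its bounded additive defects, a $(D,\tau,L,\Delta)$-admissible path with $L=d(o,h'o)-\ell_n$. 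Granting this, Propositions~\ref{admissible} and~\ref{saturation} show that $\mathbf P$ is a $c$-quasi-geodesic whose saturation $\bigcup_kY_k$ is $C$-contracting; since $\langle h'\rangle o\subset\bigcup_kY_k$ sits $\epsilon_1$-densely along the quasi-geodesic $\mathbf P$, it follows that $\langle h'\rangle o$ is contracting and the orbital map $k\mapsto h'^ko$ is a quasi-isometric embedding, i.e.\ $h'$ — hence $g$ — is contracting, contradicting $g\in\mathcal{NC}$. Once Theorem~\ref{NCisBarrierfree} is in hand, combining it with Theorem~\ref{GrowthTightThm} proves Theorem~\ref{ConjugacyThm}: the map sending $[g]$ (for $g\in\mathcal{NC}$) to a chosen minimal representative in $\mathcal V_{\epsilon,M,f^n}$ is injective and length-preserving, whence $\ec{\mathcal{NC}}\le\e{\mathcal V_{\epsilon,M,f^n}}<\e{G}$.

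\textbf{The main obstacle.} The delicate point, which I expect to absorb the bulk of the work, is the verification that $\mathbf P$ is admissible — concretely, the Bounded Projection estimate: one must show that the arc of $\mathbf P$ strictly between two consecutive long stretches projects within $\tau$ onto each of the corresponding axes $Y_k$, i.e.\ does not backtrack into the neighbourhood of $Y_k$. After translating, this reduces to controlling $\proj_{X_0}$ of the almost-geodesic $[b_0,ho]_\gamma\cdot[ho,ha_0]_{h\gamma}$: the first half is outside $N_{C_0}(X_0)$ by the maximality of $\sigma$, and for the remaining cross-terms one must combine the bounded intersection of the triple $\{h^{-1}X_0,X_0,hX_0\}$ (so that distinct translates of $\ax(f)$ diverge and a geodesic re-enters a neighbourhood of $X_0$ only inside a region of diameter $\le\mathcal R(C_0)$) with the minimality of $h$ (a genuine return of the arc deep into $N_{C_0}(X_0)$ would allow one to cut out a loop and exhibit a conjugate of $h$ of strictly smaller displacement, a contradiction), possibly supplemented by a projection-complex estimate in the spirit of Bestvina--Bromberg--Fujiwara. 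A convenient reformulation of the endgame is the dichotomy: either the long contracting stretches essentially \emph{tile} $[o,ho]$ (then $h$ already translates coarsely along a single translate of $\ax(f)$ and is contracting outright), or the complementary arcs are short relative to $\ell_n$ and the admissibility estimates close up with the fixed constants.
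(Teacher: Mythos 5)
Your overall strategy is the same as the paper's: pass to a minimal conjugacy representative $h$, observe that an $(\epsilon,f^n)$-barrier on $[o,ho]$ forces a long contracting subsegment lying near the orbit $Go$, form the $h$-periodic path $\bigcup_k h^k(\alpha\cdot[\alpha_+,h\alpha_-])$, verify it is a $(D,\tau)$-admissible (in fact $(D,\tau,L,\Delta)$-admissible) path, and conclude via Propositions \ref{admissible} and \ref{saturation} that $h$ is contracting, a contradiction. Your reorganization (working directly with the translates $Y_k=h'^k\ax(f)$ instead of the paper's two-step reduction through the sets $\mathcal K_{M,D}$ and $\mathcal P_{D,C}$ and length-$D$ contracting segments) is legitimate in outline, and it does let you get (\textbf{LL2}) for free from the bounded intersection of distinct translates of $\ax(f)$.

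However, there is a genuine gap: the verification of the (\textbf{BP}) condition, which you explicitly set aside as "the main obstacle," is not a technicality to be deferred — it is the entire content of the hard part of the theorem, and your proposed mechanisms for closing it do not work as stated. Concretely, you must bound $\proj_{Y_k}$ of the two gap endpoints on \emph{both} sides of each long stretch; the side $\proj_{Y_0}(\{(p_0)_+, h'o\})$ follows from the contracting property as you say, but the other side, $\proj_{Y_1}(\{(p_0)_+,(p_1)_-\})$ — i.e.\ that the tail of one period does not backtrack along the \emph{next} axis before reaching $h'o$ — cannot be obtained from bounded intersection of translates alone, and your "cut out a loop to get a shorter conjugate" heuristic is not a proof: shortening requires moving the basepoint to a specific orbit point and comparing displacements, which is exactly what the paper does with the midpoint $m$ of $[o,ho]$ and the two-case analysis in Proposition \ref{NCRep} (where Case~2 is shown to contradict minimality via the estimate $d(fo,hfo)\le D+2M+4C<4D\le d(o,ho)$). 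Until that computation (or an equivalent) is carried out, the claim that $\mathbf P$ is admissible is unsupported. Two smaller but real defects: (i) your displacement bound uses $d(p,a_0)=O(\epsilon+C_0)$, which is false in general since the entry point $a_0$ of $\gamma$ into $N_{C_0}(X_0)$ need not be near $t_0o$ (replace $a_0$ by the nearest point of $\gamma$ to $t_0o$); (ii) in the endgame, the saturation $\bigcup_k Y_k$ consists of \emph{full} axes and has infinite Hausdorff distance to $\langle h'\rangle o$, so Proposition \ref{Contractions}(\ref{nbhd}) does not apply to it — this is precisely why the paper takes the contracting sets to be the bounded segments $h^k\alpha$ and invokes Proposition \ref{saturation}(2) to make the path itself, rather than its saturation, contracting. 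You would need to truncate the $Y_k$ to the fellow-travelled arcs before concluding.
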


Assuming SCC action,  \ref{ConjugacyThm} follows immediately from it.
 
\begin{proof} [Proof of Theorem \ref{ConjugacyThm}]
 Let $\epsilon, M>0$ be the constants given by the growth-tightness Theorem \ref{GrowthTightThm}, and the constant $M$  is also assumed to satisfy Theorem \ref{NCisBarrierfree}. It follows immediately from Theorems \ref{NCisBarrierfree} and \ref{GrowthTightThm} that $\ec {\mathcal {NC}}<\e G$. 
\end{proof}

The rest of this section is devoted to the proof of Theorem \ref{NCisBarrierfree}. In the next three lemmas, we would like to first identify two subsets of $\mathcal NC$, and prove that they belong to $\mathcal V_{\epsilon, M, f^n}.$ In the last ingredient, we show in Proposition \ref{NCRep} that, up to finitely many exceptions, these two subsets comprises the entire $\mathcal NC$.
 


For any $M, D>0$, let $\mathcal K_{M, D}$ be the set of elements $h$ in $G$ such that any subpath of length $D$ in a geodesic $[o, ho]$ is not contained in $N_{M}(Go)$. Noting the similarity between $\mathcal K_{M, 0}$ and $\mathcal O_{M}$, the following fact is not surprising.

\begin{lem}\label{OArea}
For any sufficiently large $M\gg 0$ and any $D>0$, there exists $n>0$ such that $\mathcal K_{M, D}\subset \mathcal V_{\epsilon, M, f^n}$.
\end{lem}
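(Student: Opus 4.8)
The plan is to show that an element $h \in \mathcal K_{M,D}$ cannot carry an $(\epsilon, f^n)$-barrier on any geodesic $[o, ho]$ once $n$ is taken large enough relative to $M$, $D$, and the contraction/quasi-geodesic constants of $f$. First I would fix a contracting element $f$ and recall that the orbit $\langle f \rangle \cdot o$ is $C$-contracting and the orbital map $k \mapsto f^k o$ is a $(\lambda, \lambda)$-quasi-isometric embedding for some $\lambda \ge 1$; hence the segment $[o, f^n o]$ fellow-travels (within a uniform constant $\sigma$) the quasi-geodesic $\{f^k o : 0 \le k \le n\}$, and in particular its diameter, and the length of $[o, f^n o]$, grow linearly in $n$: $\operatorname{\textbf{diam}}(\{o, f^n o\}) \ge n/\lambda - \lambda$. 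The key point is that $[o,f^no]$ is itself a contracting set (Proposition \ref{Contractions}(\ref{subpath})), with contraction constant depending only on $f$, not on $n$.

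Next I would argue by contradiction: suppose $h \in \mathcal K_{M,D}$ is $(\epsilon, f^n)$-barrier-free fails, i.e. there is a geodesic $\gamma = [o, ho]$ and an element $t \in G$ with $d(to, \gamma) \le \epsilon$ and $d(tf^n o, \gamma) \le \epsilon$. Then the subsegment $\gamma'$ of $\gamma$ between the entry point near $to$ and the exit point near $tf^no$ has endpoints within $\epsilon$ of the contracting set $t \cdot [o, f^no]$; by the $1$-Lipschitz projection and quasi-convexity properties (Proposition \ref{Contractions}), $\gamma'$ lies in a uniformly bounded neighborhood $N_{\sigma'}(t[o,f^no])$, where $\sigma'$ depends only on $f$ and $\epsilon$. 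But $t[o,f^no] \subset N_{\sigma'}(\gamma')$ as well (the other inclusion, via connectedness), so $\gamma'$ contains a subsegment of length at least $\operatorname{length}([o,f^no]) - 2\sigma' \ge n/\lambda - \lambda - 2\sigma'$ that stays within $\sigma'$ of the orbit segment $t[o,f^no] \subset Go$, i.e. within $N_{\sigma'}(Go)$. Choosing $n$ large enough that $n/\lambda - \lambda - 2\sigma' > D$ and requiring $M \ge \sigma'$ (which is where "sufficiently large $M$" enters, and is compatible with Lemma 6.1 of \cite{YANG10} allowing $M$ arbitrarily large), this produces a subpath of $\gamma$ of length $> D$ contained in $N_M(Go)$, contradicting $h \in \mathcal K_{M,D}$. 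Hence $\gamma$ is $(\epsilon, f^n)$-barrier-free, and since $o \in B(o,M)$ and $ho \in B(ho,M)$ trivially, $h$ is $(\epsilon, M, f^n)$-barrier-free, giving $\mathcal K_{M,D} \subset \mathcal V_{\epsilon, M, f^n}$.

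The main obstacle I anticipate is making the two-sided fellow-traveling estimate for $\gamma'$ and $t[o,f^no]$ genuinely uniform in $n$ — that is, verifying that the neighborhood constant $\sigma'$ does not degrade as the barrier segment $f^n$ gets longer. This is exactly the content of Proposition \ref{Contractions}(\ref{subpath}) together with the Claim proved inside Proposition \ref{Contractions} (subpaths of a contracting quasi-geodesic are contracting with constant depending only on the original quasi-geodesic), so I would lean on that; the remaining care is bookkeeping the passage between the abstract barrier inequalities (\ref{barrierEQ}) and the concrete statement that a long chunk of the geodesic runs alongside $Go$. A secondary point is that $\epsilon$ is the fixed constant from the hypothesis of the theorem, so all estimates must be allowed to depend on it, but since we are free to enlarge $M$ and $n$ afterwards, this causes no circularity.
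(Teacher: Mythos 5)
Your argument is correct and follows essentially the same route as the paper's proof: assume an $(\epsilon,f^n)$-barrier exists, use the quasi-convexity/contraction of the axis of $f$ to trap a subsegment of $[o,ho]$ of length at least $d(o,f^no)-2\epsilon$ inside a uniform neighborhood of $Go$, then choose $M$ larger than that neighborhood constant and $n$ large enough to force length $>D$, contradicting $h\in\mathcal K_{M,D}$. The only cosmetic slip is writing $t[o,f^no]\subset Go$ — the geodesic is not literally in the orbit, but it lies in a bounded neighborhood of $\ax(f)$ (hence of $Go$) by the quasi-convexity you already invoked, so the constant is simply absorbed into $\sigma'$.
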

\begin{proof}
Since the axis $\ax(f)$ is contracting, it follows by Proposition \ref{Contractions}.\ref{qconvexity} that $\ax(f)$ is $\sigma$-quasi-convex for a function $\sigma: \mathbb R_{>0}\to \mathbb R_{>0}$.   
Suppose, to the contrary, that some $h\in \mathcal K_{M, D}$ contains an $(\epsilon, f^n)$-barrier $g\in G$, so $d(go, [o, ho]), \;d(gf^no, [o, ho])\le \epsilon$. As a consequence, there exists a subsegment $\alpha$ of $[o, ho]$ with their endpoints $\alpha_\pm \in N_\epsilon(\ax(f))$ such that  $$\len(\alpha)>d(o, f^no)-2\epsilon.$$
From the $\sigma$-quasi-convexity of  $\ax(f),$ we obtain 
  $$\alpha \subset N_\sigma(\ax(f))\subset N_\sigma(Go).$$ 

We choose now $M>\sigma$, and $n$ large enough such that $d(o, f^no)>D+2\epsilon$. Thus, the subpath $\alpha$ with length $\len(\alpha)>D$ is contained inside $N_M(Go)$. This gives a contradiction to the definition of $h\in \mathcal K_{M, D}$. Hence,  $\mathcal K_{M, D}\subset \mathcal V_{\epsilon, M, f^n}$.
\end{proof}

Given $D, C>0$, a geodesic $\gamma$ is called \textit{$D$-local $C$-non-contracting} if any connected subsegment  of $\gamma$ with length $D$ contained in $N_M(Go)$ is not $C$-contracting.   Denote by $\mathcal P_{D, C}$ the set of  $h \in G$ such that $[o, ho]$ is $D$-local $C$-non-contracting. 

The following lemma makes essential use of the hypothesis (\ref{QIEmbed}). However, it is noteworthy that most results in this paper do not require it. 
\begin{lem}\label{contractingsegment}
There exists $C>0$ depending on $\epsilon$ and $f$ such that any geodesic between $B(o,\epsilon)$ and $B(f^no, \epsilon)$ is $C$-contracting for $n\gg 0$.
\end{lem}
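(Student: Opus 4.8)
The role of hypothesis (\ref{QIEmbed}) is precisely to make the axis behave like a genuine quasi-geodesic, and the plan is to exploit this in two stages: first upgrade the \emph{set} $\ax(f)$ to a uniformly contracting quasi-geodesic \emph{path}, and then run a Morse-type argument to show that any geodesic with endpoints $\epsilon$-close to $o$ and $f^no$ stays uniformly close to that path. Note that the path-oriented tool Proposition \ref{saturation}(2) is not directly available here, since the entry/exit vertices $o,f^no$ are at unbounded distance; the subpath property Proposition \ref{Contractions}.\ref{subpath} is what carries the argument.

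Step one. Let $A$ be the bi-infinite path obtained by concatenating the geodesics $[f^io,f^{i+1}o]$, $i\in\mathbb Z$. By hypothesis (\ref{QIEmbed}) the map $i\mapsto f^io$ is a quasi-isometric embedding, so $A$ is a $c_0$-quasi-geodesic with $c_0$ depending only on $f$ (each ``edge'' of $A$ has length $\le\operatorname{diam}(\{o,fo\})$, uniformly). Since $\langle f\rangle$ has finite index in $E(f)$, the set $\ax(f)$ lies at finite Hausdorff distance from $A$; as $\ax(f)$ is contracting, Proposition \ref{Contractions}.\ref{nbhd} gives that $A$ is $C'$-contracting. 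Then $A_n:=[o,fo]\cdot[fo,f^2o]\cdots[f^{n-1}o,f^no]$ is a subpath of $A$, so by Proposition \ref{Contractions}.\ref{subpath} it is $C_0$-contracting with $C_0$ depending only on $A$, hence only on $f$ — and, crucially, not on $n$. Enlarging $C_0$ we may assume $C_0\ge\epsilon$ and $c_0\le C_0$.

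Step two. Let $\alpha$ be a geodesic with $\alpha_-\in B(o,\epsilon)$ and $\alpha_+\in B(f^no,\epsilon)$; since $o,f^no$ are the endpoints of $A_n$ we have $d(\alpha_\pm,A_n)\le\epsilon\le C_0$. I claim $d_{Haus}(\alpha,A_n)$ is bounded by a constant depending only on $\epsilon$ and $f$. First, $\alpha\subset N_{3C_0}(A_n)$: any component $I$ of $\alpha\setminus N_{C_0}(A_n)$ is an open subinterval not containing $\alpha_\pm$, so its endpoints satisfy $d(\cdot,A_n)=C_0$; as $I$ is a geodesic with $d(I,A_n)\ge C_0$, the contracting property forces $\diam\pi_{A_n}(I)\le C_0$, whence $\len(I)\le 3C_0$, and every point of $\alpha$ is within $\tfrac32C_0+C_0$ of $A_n$. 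Put $D_1:=3C_0$. Conversely, suppose some $x_0\in A_n$ had $d(x_0,\alpha)>D_2$ for a large constant $D_2=D_2(\epsilon,f)$ to be fixed; then $d(x_0,o),d(x_0,f^no)>D_2-\epsilon$, so (using that $A_n$ is a $c_0$-quasi-geodesic) the point $x_0$ is far along $A_n$ from both ends. Splitting $A_n=A_n^\ell\cup A_n^r$ at $x_0$, the continuous function $y\mapsto d(y,A_n^\ell)-d(y,A_n^r)$ on $\alpha$ is negative at $\alpha_-$ and positive at $\alpha_+$ (a short estimate using the quasi-geodesic inequality for $A_n$), hence vanishes at some $y_0\in\alpha$; then $d(y_0,A_n^\ell)=d(y_0,A_n^r)=d(y_0,A_n)\le D_1$, so nearest points $u\in A_n^\ell$, $v\in A_n^r$ satisfy $d(u,v)\le 2D_1$. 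Since $u,x_0,v$ occur in this order along the $c_0$-quasi-geodesic $A_n$, the arc $[u,v]_{A_n}$ has length $\le c_0(2D_1)+c_0$, hence $d(x_0,u)\le c_0(2D_1+1)$ and $d(x_0,\alpha)\le d(x_0,y_0)\le c_0(2D_1+1)+D_1$; choosing $D_2$ larger than this yields the contradiction. Thus $d_{Haus}(\alpha,A_n)\le D_2$.

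Step three. Since $\alpha$ lies at finite Hausdorff distance (bounded by $D_2=D_2(\epsilon,f)$) from the $C_0$-contracting set $A_n$, Proposition \ref{Contractions}.\ref{nbhd} shows $\alpha$ is $C$-contracting for some $C=C(\epsilon,f)$, uniformly in $n$ and in the choice of geodesic $\alpha$ and of endpoints in $B(o,\epsilon)$, $B(f^no,\epsilon)$. The main obstacle is Step one: one must be careful that the contraction constant of $A_n$ does not degrade with $n$, which is exactly where the subpath property of a contracting quasi-geodesic (and hence hypothesis (\ref{QIEmbed})) is indispensable; Steps two and three are routine manipulations of contracting sets.
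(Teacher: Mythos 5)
Your proposal is correct and follows essentially the same route as the paper: hypothesis (\ref{QIEmbed}) makes the $\langle f\rangle$-orbit path a contracting quasi-geodesic, the subpath property (Proposition \ref{Contractions}.\ref{subpath}) gives a contraction constant for the segment from $o$ to $f^no$ that is uniform in $n$, and the finite-neighborhood property (Proposition \ref{Contractions}.\ref{nbhd}) transfers this to the geodesic $\alpha$. The only difference is that you spell out, via a Morse-type argument, the uniformly bounded Hausdorff distance between $\alpha$ and that subpath, which the paper asserts without detail.
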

\begin{proof}
By the hypothesis (\ref{QIEmbed}), we have that  $n\in \mathbb Z \to f^no \in \mathrm Y$ is a quasi-isometric embedding with a contracting image. That is to say,  the path $\gamma$ labeled by $\{f^n: n\in \mathbb Z\}$ is     a contracting quasi-geodesic. By  Assertion  (\ref{subpath})  of Proposition \ref{Contractions}, a subpath of a contracting quasi-geodesic is uniformly contracting. So for any geodesic $\alpha$ between $B(o,\epsilon)$ and $B(f^no, \epsilon)$, it has finite Hausdorff distance to a subpath of  $\gamma$ which is contracting.  Because the contracting property is preserved up to a finite Hausdorff distance by (\ref{nbhd}) of Proposition \ref{Contractions}, we conclude that  $\alpha$ is contracting as well.  
\end{proof}

\begin{lem}\label{DCArea}
For any $D>0$, there exists $n$ such that $\mathcal P_{D,C} \subset \mathcal V_{\epsilon, M, f^n}$, where $C>0$  is given by Lemma \ref{contractingsegment}.
\end{lem}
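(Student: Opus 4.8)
The plan is to argue by contradiction, in the same spirit as the proof of Lemma \ref{OArea}. Suppose that $h\in\mathcal P_{D,C}$ contains an $(\epsilon,f^n)$-barrier, i.e.\ there is $g\in G$ with $d(go,[o,ho])\le\epsilon$ and $d(gf^no,[o,ho])\le\epsilon$. First I would extract from $[o,ho]$ a subsegment $\alpha$ whose endpoints $\alpha_-,\alpha_+$ are $\epsilon$-close to $go$ and $gf^no$ respectively; by the triangle inequality $\len(\alpha)\ge d(go,gf^no)-2\epsilon=d(o,f^no)-2\epsilon$, which exceeds $D$ once $n$ is chosen large. Applying Lemma \ref{contractingsegment} to the geodesic $g^{-1}\alpha$ (which runs between $B(o,\epsilon)$ and $B(f^no,\epsilon)$, using that $G$ acts by isometries so the contracting constant is $G$-invariant), we learn that $\alpha$ itself is $C$-contracting for $n\gg 0$.

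Next I need to see that this $\alpha$ witnesses a violation of the $D$-local $C$-non-contracting condition, which requires the offending subsegment to lie inside $N_M(Go)$. As in Lemma \ref{OArea}, since $\ax(f)$ is $\sigma$-quasi-convex with $\alpha_\pm\in N_\epsilon(\ax(f))$, the whole of $\alpha$ lies in $N_\sigma(\ax(f))\subset N_\sigma(Go)$; choosing $M>\sigma$ at the outset puts $\alpha\subset N_M(Go)$. Now pick any connected subsegment $\beta\subset\alpha$ of length exactly $D$ (possible since $\len(\alpha)>D$). By Proposition \ref{Contractions}.\ref{subpath}, a subpath of a $C$-contracting geodesic is $C'$-contracting with $C'$ depending only on the original constant; enlarging $C$ at the start of the argument (or rather fixing $C$ as the constant from Lemma \ref{contractingsegment} and noting the subpath statement gives a uniform bound) we get that $\beta$ is a length-$D$ subsegment of $[o,ho]$ contained in $N_M(Go)$ that is contracting, contradicting $h\in\mathcal P_{D,C}$.

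The one delicate point is matching the contracting constant $C$ in the definition of $\mathcal P_{D,C}$ with the constant produced by Lemma \ref{contractingsegment} together with the subpath loss in Proposition \ref{Contractions}.\ref{subpath}: the definition of $\mathcal P_{D,C}$ fixes $C$, so I must make sure the constant asserted by Lemma \ref{contractingsegment} — after passing to length-$D$ subsegments — is $\le C$, i.e.\ the lemma should be read as: the relevant $C$ in the statement of Lemma \ref{DCArea} is precisely the one obtained by running Lemma \ref{contractingsegment} and then Proposition \ref{Contractions}.\ref{subpath}. This is really a bookkeeping issue rather than a mathematical obstacle; once the constants are pinned down in the right order ($\epsilon$ and $f$ given, then $C$ from Lemma \ref{contractingsegment} + subpath loss, then $M>\sigma$, then $n$ large enough that $d(o,f^no)>D+2\epsilon$), the contradiction closes and $\mathcal P_{D,C}\subset\mathcal V_{\epsilon,M,f^n}$ follows.
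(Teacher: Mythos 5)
Your argument is correct and follows essentially the same route as the paper: contradict barrier-freeness by extracting the subsegment $\alpha$ between the $\epsilon$-neighborhoods of $go$ and $gf^no$, apply Lemma \ref{contractingsegment} to see it is $C$-contracting, and choose $n$ so that $\len(\alpha)>D$. In fact you are more careful than the paper's own write-up, which does not explicitly verify $\alpha\subset N_M(Go)$ nor address the constant loss when passing to a length-$D$ subpath via Proposition \ref{Contractions}.\ref{subpath}; your resolution (fixing the order of constants so that the $C$ in $\mathcal P_{D,C}$ is the one obtained after the subpath step) is the right way to close that bookkeeping gap.
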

\begin{proof}
The proof is similar to that of Lemma \ref{OArea}. We give it for completeness.

Argue by way of contradiction. For any $h\in \mathcal P_{D,C}\setminus \mathcal V_{\epsilon, M, f^n}$, a geodesic  between $B(o, M)$ and $B(ho, M)$ contains an $(\epsilon, f^n)$-barrier. In particular, let us consider the geodesic $\gamma=[o, ho]$, so by definition \ref{barriers}, there exists a subsegment $\alpha$ of $\gamma$ such that $\len(\alpha)>d(o, f^no)-2\epsilon$. In addition, the two endpoints of $\alpha$ lies in a $\epsilon$-neighborhood of $\ax(f)$ so by Lemma \ref{contractingsegment}, we see that  $\alpha$ is $C$-contracting. However, by choosing $n\gg 0$ large enough such that $$\len(\alpha)>d(o, f^no)-2\epsilon>D,$$ we got a contradiction, because $[o, ho]$  is $D$-local $C$-non-contracting for $h\in \mathcal P_{D,C}$. The proof is then complete.
\end{proof}

This is the main technical part in proving Theorem \ref{NCisBarrierfree}. The idea behind the proof is the well-known fact that the minimal conjugacy representative   generates a local geodesic path.   

\begin{prop}\label{NCRep}
For any $C>0$, there exists $D=D(C)>0$ such that  every $g \in \mathcal {NC}$ is conjugated to an element $h \in G$ which has one of the following properties
\begin{enumerate}
\item
$h \in \mathcal P_{D, C}$,   
\item
$d(o, ho)\le 4D$,
\item
$h\in \mathcal K_{M, D}$.
\end{enumerate}
\end{prop}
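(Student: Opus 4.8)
The plan is to show that any non-contracting element $g$ either has very short conjugacy length (case (2)), or else, after passing to a minimal conjugacy representative $h$, the cyclic geodesic it traces out fails to be ``locally contracting'' in a uniform way — which puts $h$ either in $\mathcal{K}_{M,D}$ or in $\mathcal{P}_{D,C}$. The key mechanism is the standard fact that a minimal conjugacy representative $h$ of $[g]$ produces a \emph{bi-infinite local geodesic}: concatenating translates $\cdots [o,ho]\cdot h[o,ho]\cdot h^2[o,ho]\cdots$ is, up to the endpoint identifications $h^k o$, a path every subsegment of length $\le d(o,ho)$ of which is geodesic, because replacing $h$ by a shorter conjugate would contradict minimality. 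So the whole ``axis-like'' path $\gamma_h := \bigcup_{k\in\mathbb Z} h^k[o,ho]$ is a $d(o,ho)$-local geodesic invariant under $\langle h\rangle$.

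First I would fix the contracting constant $C$ (the input of the proposition) and choose $D$ large relative to $C$, to the contraction functions of $\ax(f)$, to the quasiconvexity function $\sigma$, and to the admissible-path constants from Proposition~\ref{admissible} and Proposition~\ref{saturation}. Suppose $h$ is a minimal representative of $[g]\in\mathcal{NC}$ with $d(o,ho) > 4D$ (otherwise case (2) holds), and suppose $h\notin\mathcal{K}_{M,D}$ and $h\notin\mathcal{P}_{D,C}$; I will derive a contradiction with $h$ being non-contracting. Since $h\notin\mathcal{P}_{D,C}$, the geodesic $[o,ho]$ contains a connected subsegment $\sigma_0$ of length exactly $D$ lying in $N_M(Go)$ which \emph{is} $C$-contracting. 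Translating by powers of $h$, the path $\gamma_h$ contains a $\langle h\rangle$-periodic family of $C$-contracting segments $\sigma_k = h^k\sigma_0$, one in each fundamental domain, spaced a bounded distance apart (bounded by $d(o,ho)$, which is itself bounded — here I need an \emph{upper} bound on $d(o,ho)$, which is where failure of $h\in\mathcal{K}_{M,D}$ must be used: if $d(o,ho)$ were huge, a long subsegment of $[o,ho]$ inside $N_M(Go)$... hmm).

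Let me restructure: the point of separating $\mathcal{K}_{M,D}$ and $\mathcal{P}_{D,C}$ is that if $h\notin\mathcal{K}_{M,D}$ then $[o,ho]$ has \emph{some} length-$D$ subsegment inside $N_M(Go)$, and if moreover $h\notin\mathcal{P}_{D,C}$ then $[o,ho]$ has a length-$D$ subsegment inside $N_M(Go)$ that is $C$-contracting — wait, these are not the same negation. Negating $h\in\mathcal{K}_{M,D}$ gives: there \emph{exists} a length-$D$ subsegment of $[o,ho]$ inside $N_M(Go)$. Negating $h\in\mathcal{P}_{D,C}$ gives: there \emph{exists} a length-$D$ subsegment of $[o,ho]$ inside $N_M(Go)$ which \emph{is} $C$-contracting. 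So assuming $h\notin\mathcal{P}_{D,C}$ alone already hands me a $C$-contracting segment $\sigma_0\subset[o,ho]$. Then: the $\langle h\rangle$-translates $\{h^k\sigma_0\}$ together with the geodesic segments of $\gamma_h$ joining consecutive ones form, after choosing $D$ large via Proposition~\ref{saturation}, an admissible path with uniformly long local pieces and bounded projections; by Proposition~\ref{admissible} this admissible path is a quasi-geodesic, and by Proposition~\ref{saturation}(1) or (2) its saturation — hence a uniformly bounded neighborhood of $\gamma_h$, hence of the orbit $\langle h\rangle o$ — is $C'$-contracting. Combined with $\gamma_h$ being a local geodesic (so a genuine quasi-geodesic, giving the quasi-isometric embedding~\eqref{QIEmbed}), this makes $h$ a contracting element, contradicting $g\in\mathcal{NC}$. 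Therefore one of (1),(2),(3) holds. The role of case (3), $h\in\mathcal{K}_{M,D}$, is to cover the residual case where no length-$D$ subsegment of $[o,ho]$ lies in $N_M(Go)$ at all — but the logic above shows we only need it when $h\notin\mathcal{P}_{D,C}$ \emph{and} we could not even find a contracting such segment; in fact $\mathcal{K}_{M,D}\subset\mathcal{P}_{D,C}$ vacuously (no subsegment in $N_M(Go)$ at all $\Rightarrow$ trivially $D$-local $C$-non-contracting), so honestly (3) may be subsumed, but I'd keep it for the bookkeeping the author wants downstream.

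The main obstacle I anticipate is the \textbf{admissibility verification}: turning the periodic family of $C$-contracting segments $\{h^k\sigma_0\}$ along the local geodesic $\gamma_h$ into a bona fide $(D,\tau)$-admissible (indeed $(D,\tau,L,\Delta)$-admissible) path. This requires (i) checking the Bounded Projection property \textbf{(BP)} — i.e.\ that the portions of $\gamma_h$ between consecutive contracting segments project with bounded diameter onto each $h^k\sigma_0$, which should follow from the $C$-contracting property plus the fact that $\gamma_h$ is a local geodesic and these portions exit the $C$-neighborhood (using that $D$ is large so the segments are well-separated along $\gamma_h$); (ii) arranging that the $h^k\sigma_0$ have pairwise bounded intersection as a contracting system, or else that consecutive ones are far apart, to get \textbf{(LL2)} — this uses that $\langle h\rangle$ acts cocompactly on $\gamma_h$ with a single orbit of translates, together with a separation argument; and (iii) controlling the constants uniformly so that the final contraction constant depends only on $C$ (and the fixed data $f,\epsilon,M$), which is needed for the quantifier structure of the statement. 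A secondary subtlety is making precise the ``minimal representative $\Rightarrow$ local geodesic'' claim with the right local constant $\approx d(o,ho)$ and reconciling it with the lower bound $d(o,ho)>4D$ so that the local geodesic has genuinely long geodesic stretches, ensuring~\eqref{QIEmbed} for $h$ once contraction of a neighborhood of $\langle h\rangle o$ is established.
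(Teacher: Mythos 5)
Your overall skeleton coincides with the paper's: take a minimal conjugacy representative $h$, dispose of short elements via (2), extract from the failure of (1) a $C$-contracting length-$D$ segment $\alpha\subset [o,ho]$ lying in $N_M(Go)$, build the $\langle h\rangle$-periodic path $\bigcup_{n}h^n(\alpha\cdot[\alpha_+,h\alpha_-])$, verify it is admissible, invoke Propositions \ref{admissible} and \ref{saturation} to conclude that $h$ is contracting, and contradict $g\in\mathcal{NC}$. (Your side remark that $\mathcal K_{M,D}\subset\mathcal P_{D,C}$ vacuously is also correct.) However, there is a genuine gap precisely at the two steps you yourself flag as the main obstacles, namely \textbf{(BP)} and \textbf{(LL2)}, because the mechanism you propose to handle them is not a valid fact in this setting. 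The claim that a minimal conjugacy representative makes $\gamma_h=\bigcup_k h^k[o,ho]$ a $d(o,ho)$-local geodesic does not follow from minimality: minimality only gives $d(o,ho)\le d(ko,hko)$ for \emph{orbit} points $ko$, and says nothing about the displacement of arbitrary points of $[o,ho]$. For instance, for a parabolic isometry of $\mathbb H^2$ (minimal in its conjugacy class with respect to any basepoint), the concatenation $\bigcup_k h^k[o,ho]$ fails to be geodesic on arbitrarily small scales at the break points $h^k o$, since consecutive segments meet at an angle strictly less than $\pi$. So the "bi-infinite local geodesic" cannot be used to get the boundedness of projections in \textbf{(BP)} or the separation $d(\alpha_+,h\alpha_-)>D$ in \textbf{(LL2)}, and without those the admissibility (hence the contradiction) is not established.

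This is exactly where the paper's proof does its real work, and where the hypothesis $\alpha\subset N_M(Go)$ — which your outline never exploits — is essential: it supplies orbit points within distance $M$ of strategic points of $\gamma$ (the midpoint $m$ of $[o,ho]$, projection points of $h\alpha_-$, etc.), and minimality is then applied \emph{at those orbit points}. Concretely, with $D>2M+2C$ and $d(o,ho)>4D$: if $d(\alpha_+,h\alpha_-)\le D$, one shows $m\in\alpha$, picks $k$ with $d(ko,m)\le M$, and gets $d(ko,hko)\le 2M+3D<d(o,ho)$, contradicting minimality — this gives \textbf{(LL2)}; and a case analysis of the same flavor (projecting $ho$ and $h\alpha_-$ to $\alpha$, producing an orbit point $fo$ near the projection and comparing $d(fo,hfo)$ with $d(o,ho)$) yields \textbf{(BP)} with $\tau=2M+8C$. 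Your proposal would be repaired by replacing the local-geodesic claim with this orbit-point comparison argument; as written, the central verifications are missing and the tool you name for them is false in general.
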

\begin{proof} 
Let $h$ be a \textit{minimal} element in $[g]$ such that $d(o, ho)=\ell^o([g])$. Since contracting elements are preserved under conjugacy, the element $h$ is thus  non-contracting. We fix a constant $D>2M+2C$ in the proof.
Assuming that  $h\notin \mathcal K_{M, D}$ and $d(o, ho)>4D$, we shall prove that $h \in \mathcal P_{D, C}$. 

Suppose, to the contrary, that there exists a subsegment $\alpha$ of length $D$ in $[o, ho]$ such that $\alpha\subset N_M(Go)$ and $\alpha$ is $C$-contracting. Consider the path defined by $$\gamma:=\bigcup_{n\in \mathbb Z} h^n(\alpha\cdot [\alpha_+, h\alpha_-]).$$  The idea of proof is to show that $\gamma$ is $(D, \tau)$-admissible with a contracting system $\mathbb X(\gamma):=\{h^{n}\alpha: n\in \mathbb Z\}$, which will imply   $h$ to be contracting so gives a contradiction to $h\in \mathcal NC$. We now verify the following conditions appearing in Definition \ref{AdmDef}.

\underline{Condition (\textbf{LL1})}: it holds by construction: $\len(\alpha)= D$.

\underline{Condition (\textbf{LL2})}: up to a translation by some power of $h$, it suffices to prove $d(\alpha_+, h\alpha_-)>D$. For that end, we argue by contradiction by assuming the second inequality in the following
\begin{equation}\label{absdiffEQ}
|d(ho, \alpha_+)-d(ho, h\alpha_-)|\le d(\alpha_+, h\alpha_-)\le D.
\end{equation}

Let $m$ be the middle point of $[o, ho]$ so that $d(o, m)=d(m, ho)$. Thus, noting $\alpha_\pm \in [o, ho]$, we deduce from (\ref{absdiffEQ}): 
 $$
 \begin{array}{llr}
 D=\ell(\alpha)&=d(o, ho)-d(o, \alpha_-)-d(\alpha_+, ho)\\
 &\ge d(o, ho)-2d(o, \alpha_-)-D.   
 \end{array}
 $$ which yields  $d(o, \alpha_-)+\len(\alpha) =d(o, \alpha_-)+D \ge d(o, m)$ so $\alpha_+\in [m, ho]$. 
 
Moreover, we see $\alpha_- \in [o, m]$. Otherwise, if $\alpha_- \in (m, ho]$, then $d(ho, \alpha_+)+\len(\alpha)=d(ho, \alpha_+)+D<d(o, m)<d(o, \alpha_-)$. This is a contradiction to  (\ref{absdiffEQ}).   As a result, we have $m\in \alpha$ and thus, $$\max\{d(\alpha_-, m),\; d(\alpha_+, m)\}\le D$$
 from which we infer  
\begin{equation}\label{dmhmEQ}   
 d(m, hm)\le d(m, \alpha_-)+d(\alpha_-, h\alpha_+)+d(h\alpha_+, hm)\le 3D,
\end{equation}
 for $d(\alpha_+, h\alpha_-)\le D$ was assumed in  (\ref{absdiffEQ}).   

Now, for $\alpha\subset N_M(Go)$, let $k\in G$ such that $d(ko,m)\le M$. By (\ref{dmhmEQ}), $d(ko, hko)\le 2M+3D$. However, $d(o, ho)\le d(ko, hko)$ holds by the minimal choice of $h$. By the choice of $D>2M$, we got a contradiction with $d(o, ho)>4D$. So the Condition (\textbf{LL2}) is fulfilled.

\begin{figure}[htb] 
\centering \scalebox{0.8}{
\includegraphics{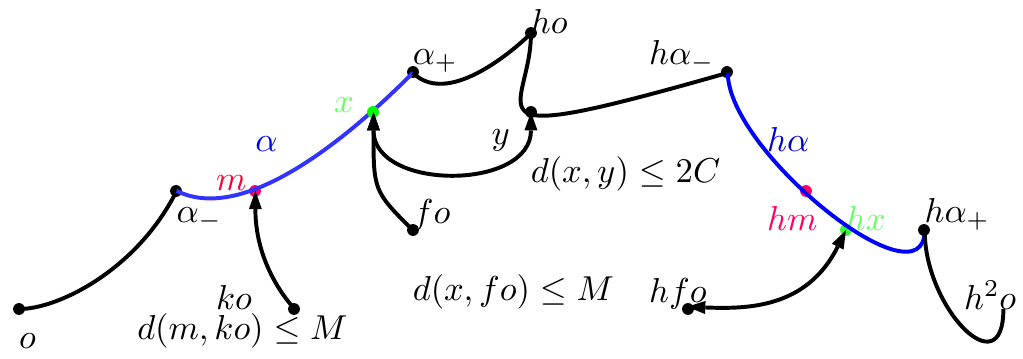} 
} \caption{Proof of Proposition \ref{NCRep}} \label{fig:fig2}
\end{figure}
 
\underline{Condition (\textbf{BP})}: Up to a translation, it suffices to  prove $$\proj_{\alpha}(\{\alpha_+, h\alpha_-\})\le \tau$$ and $$\proj_{\alpha}(\{h^{-1}\alpha_+, \alpha_-\})\le \tau$$ for some constant $\tau>0$ given below. We only prove  the first inequality; the second one is symmetric. 

To derive a upper bound of $\proj_{\alpha}(\{\alpha_+, h\alpha_-\})$, we will do it for $\proj_{\alpha}(\{\alpha_+, ho\})$ and $\proj_\alpha(\{ho, h\alpha_-\})$.  
First of all, since $\alpha$ is a contracting subsegment of the geodesic $[o, ho]$, we see $\proj_{\alpha}(\{\alpha_+, ho\})\le 2C$ by a projection argument: project $ho$ to a point $z\in \alpha$ then by $C$-contracting property of $\alpha$, we have $d(z, [\alpha_+, ho])\le 2C$ so $d(z, \alpha_+)\le 2C$.

Note that $[ho, h\alpha_-]\cap N_C(\alpha)=\emptyset$ implies $\proj_\alpha(\{ho, h\alpha_-\})\le C$ by the contracting property. Henceforth, it remains to consider the case  $[ho, h\alpha_-]\cap N_C(\alpha)\ne \emptyset$. 

We prove  now  that $\alpha_+\in  [m, ho]$. Indeed,  otherwise we have $\alpha_+ \in [o, m)$, then $d(o, m)<d(\alpha_+, ho)$. It then follows: 
$$d(ho, h\alpha_-)=d(o, \alpha_-)=d(o, m)-\len(\alpha)-d(\alpha_+,m)< d(ho, \alpha_+)-D.$$ On the other hand, since $[ho, h\alpha_-]\cap N_C(\alpha)\ne \emptyset$ is assumed, we see $d(\alpha_+, ho)\le C+d(ho, h\alpha_-)$. By the choice of the constant $D>C$, this contradicts to the above inequality. Hence, $\alpha_+\in [m, ho]$ is proved. 

Before proceeding further, we note the following two additional facts.
\begin{enumerate}
\item
We project $h\alpha_-$ to a point $x\in \alpha$.  By the assumption that $\alpha$ is $C$-contracting and $[ho, h\alpha_-]\cap N_C(\alpha)\ne \emptyset$, we see that there exists $y\in [ho, h\alpha_-]$ such that $d(x, y)\le 2C$.
\item
Taking $\alpha\subset N_M(Go)$ into account, there exist $M>0$ and $f\in G$ such that $d(x, fo)\le M$.  
Thus, 
\begin{equation}\label{ygxEQ}
\begin{array}{ll}
d(fo, hfo)&\le d(fo, x)+d(x, y)+d(y, hx)+d(hx, hfo)\\
&\le 2M+2C+d(y, hx).
\end{array}
\end{equation}
\end{enumerate}

By the observation above that $\alpha_+\in [m, ho]$, we examine the following two cases, the second case of which will  be proven impossible.

\textbf{Case 1.} Assume that $\alpha_-\in [m, ho]$.  By the position of $y\in [ho, h\alpha_-]$ and $hx\in [h\alpha_-, h^2o]$, we see 
\begin{equation}\label{yhxEQ}
\begin{array}{rlr}
d(y, hx)=&d(ho, h^2o)-d(hx, h^2o)-d(y, ho)& \\
\le& d(o, ho)-2d(y, ho)+2C,
\end{array}
\end{equation}
where we used $d(x,y)\le 2C$.
By the minimal choice of $h$, we have $d(o, ho)\le d(fo, hfo)$ so by combining (\ref{ygxEQ}) and (\ref{yhxEQ}),  $d(y, ho)\le M+2C$. Since $d(x,y)\le 2C$ and $\alpha_+\in [x, ho]$, we have $d(x, \alpha_+)\le d(x, ho)\le M+4C$. Because $\alpha$ is a subsegment of a geodesic $[o, ho]$, the endpoint $\alpha_+$ of $\alpha$ must be a projection point of $ho$.   Hence, $\proj_{\alpha}(\{ho, h\alpha_-\})\le d(\alpha_+, x) \le M+4C$.

\textbf{Case 2.} Otherwise, we have $m\in \alpha$ and then $d(x, m)\le D$ for $x\in \alpha$. Since $d(x,y)\le 2C$ and $d(o,m)=d(ho, m)$, we have 
$$
\begin{array}{rl}
|d(ho, y)-d(ho,hm)|=&|d(ho, y)-d(ho,m)|\\
 \le &d(y, m)\le 2C+D
\end{array}
$$
yielding 
$$
\begin{array}{rl}
d(y, hx)=|d(ho, hx)-d(ho, y)| \le &|d(ho, y)-d(ho,hm)|+d(x,m) \\
\le & 2D+2C.
\end{array}
$$ We thus derive from (\ref{ygxEQ}) that $$d(fo, hfo)\le D+2M+4C.$$ However, the   minimal choice of $h$ gives the following $$4D\le d(o, ho)\le d(fo, hfo).$$ Since $D>2M+2C$, we get a contradiction and the \textbf{Case 2} is impossible.

Setting $\tau=2M+8C$ thus completes the verification of the Condition (\textbf{BP}). In a word, we have proved that $\gamma$ is a $(D, \tau)$-admissible path. Moreover, $\gamma$ is a $(D, \tau, L, 0)$-admissible path where $L:=d(\alpha_+, h\alpha_-).$ 

We now choose the constant $D=D(\tau)>0$ by Proposition \ref{saturation}.(2), so there exists a constant $C'=C'(L, D)$ such that $\gamma$ is a $C'$-contracting quasi-geodesic. 

Recall that $\gamma=\bigcup_{n\in \mathbb Z} h^n(\alpha\cdot [\alpha_+, h\alpha_-])$ so $\gamma$ has a finite Hausdorff distance to an orbit of $\langle h\rangle$. Therefore, $h$ is a contracting element: this contradicts to the assumption that $g\in \mathcal {NC}$. The proposition is thus proved.
\end{proof}


We are now ready to complete the proof of Theorem \ref{NCisBarrierfree}.
\begin{proof}[Proof of Theorem \ref{NCisBarrierfree}]
Let $D=D(C)$ be the constant given by Proposition \ref{NCRep} where $C$ is given by Lemma \ref{contractingsegment}. Let $M$ be the constant satisfying Lemma \ref{OArea} so that 
$\mathcal K_{M, D}\subset \mathcal V_{\epsilon, M, f^n}$. Meanwhile,  $\mathcal P_{D, C}\subset \mathcal V_{\epsilon, M, f^n}$ by Lemma \ref{DCArea}.

Every element $g$ in $\mathcal {NC}$ has conjugating representative $h \in G$ in one of the three categories in Proposition \ref{NCRep}. In the first and third cases, the set of such $h$ has been proved to be contained in the barrier-free set $\mathcal V_{\epsilon, M, f^n}$.  Note there are only finitely many possibilities that  $d(o, ho)\le 4D$. Clearly, we can raise $n$ such that $d(o, f^n)$ is sufficiently large so these $h$ lie in $\mathcal V_{\epsilon, M, f^n}$ as well. So the proof is finished.
\end{proof}

\section{Genericity of contracting elements}\label{Section4}
\subsection{Statements and corollaries}

This subsection is to derive various genericity results, including \ref{GenericThm}, from  a more general technical theorem as follows.

Let $\epsilon, M>0$ be given by Theorem  \ref{GrowthTightThm}. Given a  contracting element $f\in G$, denote by $\mathcal {BF}$ the set of elements $g$ in $G$  admitting  conjugacy representatives in a barrier-free set $\mathcal V_{\epsilon, M, f}$ (for simplicity, the constant $M$ will be omited below).

\begin{thm}\label{tightNC}
Assume that the action of $G$ on $\mathrm Y$ is SCC. Then the set $\mathcal {BF}$  is growth-tight: there exists $\varepsilon>0$ such that  
$$ {\sharp \big(N(o, n)\cap \mathcal BF\big)} \le \exp(-\varepsilon n) \cdot {\sharp N(o, n)}$$ for all $n\gg 0$.

If the action is only assumed to be proper, then the set $\mathcal {BF}$ is growth-negligible.
\end{thm}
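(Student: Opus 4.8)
The plan is to reduce Theorem~\ref{tightNC} to one structural input -- the \emph{almost geodesic decomposition} of Proposition~\ref{almostgeodform0} below -- and then to finish by a counting estimate. In the form I shall use it, Proposition~\ref{almostgeodform0} asserts that there are a constant $\kappa>0$ and a barrier-free set $\mathcal V$ (of the type to which Theorem~\ref{GrowthTightThm} applies, so $\e{\mathcal V}<\e G$ for an SCC action and $\sharp\big(N(o,\ell)\cap\mathcal V\big)=o(\exp(\e G\ell))$ for a proper one) such that every $g\in\mathcal{BF}$ can be written $g=k\hat g k^{-1}$ with $\hat g\in\mathcal V$ and
$$
2\,d(o,ko)+d(o,\hat g o)\ \le\ d(o,go)+\kappa .
$$
Equivalently, the broken path $[o,ko]\cup k[o,\hat g o]\cup k\hat g[o,k^{-1}o]$ from $o$ to $go$ is a quasi-geodesic, so that the cost of the two conjugating segments is already accounted for by $d(o,go)$.

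Granting this, I would finish as follows. The map $(k,\hat g)\mapsto k\hat g k^{-1}$ on $G\times\mathcal V$ surjects onto $\mathcal{BF}$, and by Proposition~\ref{almostgeodform0} each $g\in N(o,n)\cap\mathcal{BF}$ has a preimage with $m:=d(o,ko)\le (n+\kappa)/2$ and $d(o,\hat g o)\le n+\kappa-2m$. Hence
$$
\sharp\big(N(o,n)\cap\mathcal{BF}\big)\ \le\ \sum_{m=0}^{\lceil(n+\kappa)/2\rceil}\ \sharp N(o,m+1)\cdot\sharp\big(N(o,n+\kappa-2m)\cap\mathcal V\big).
$$
For an SCC action I would then use purely exponential growth of $G$ (established in \cite{YANG10}), $\sharp N(o,j)\asymp\exp(\e G j)$, together with $\sharp\big(N(o,\ell)\cap\mathcal V\big)\prec\exp(\e{\mathcal V}\ell)$; each summand is then $\prec\exp\big(\e{\mathcal V}n+(\e G-2\e{\mathcal V})m\big)$, and summing the resulting geometric series in $m$ gives
$$
\sharp\big(N(o,n)\cap\mathcal{BF}\big)\ \prec\ (1+n)\,\exp\!\big(\max\{\tfrac12\e G,\ \e{\mathcal V}\}\,n\big).
$$
Since $G$ is non-elementary with a contracting element, $\e G>0$, so $\max\{\tfrac12\e G,\e{\mathcal V}\}<\e G$; dividing by $\sharp N(o,n)\asymp\exp(\e G n)$ yields an $\varepsilon>0$ with $\sharp\big(N(o,n)\cap\mathcal{BF}\big)\le\exp(-\varepsilon n)\,\sharp N(o,n)$ for all $n\gg 0$.

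For a merely proper action there is no purely exponential growth, so instead I would estimate the same sum directly to obtain $\sharp\big(N(o,n)\cap\mathcal{BF}\big)=o(\exp(\e G n))$. Here Theorem~\ref{GrowthTightThm} gives $\sharp\big(N(o,\ell)\cap\mathcal V\big)=\eta(\ell)\exp(\e G\ell)$ with $\eta(\ell)\to 0$; combining this with the crude bound $\sharp N(o,j)\le\exp\big((\e G+\delta)j\big)$ (fixed $\delta\in(0,\e G)$, $j$ large) and splitting the index range around $m\approx n/4$: the summands with $m\le n/4$ carry the factor $\sup_{\ell\ge n/2}\eta(\ell)\to 0$ times a convergent geometric series in $m$, while those with $m>n/4$ are crushed by $\exp\big((\delta-\e G)n/4\big)$; both pieces are $o(\exp(\e G n))$. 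This is the asserted growth-negligibility.

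The real content -- and the step I expect to be the main obstacle -- is Proposition~\ref{almostgeodform0}, which is why it takes up \textsection\textsection\ref{Section5}--\ref{Section7}. Starting from a conjugacy decomposition $g=khk^{-1}$ with $h$ in the barrier-free set $\mathcal V_{\epsilon,M,f}$, one must re-choose the conjugator $k$ and replace $h$ by a suitable ``shift'' $\hat g$ (still barrier-free) so that the associated broken path becomes a quasi-geodesic. A strong $f$-barrier on a geodesic $[o,ho]$ forces, via the contracting property, two symmetric barriers on the two ``$k$-sides'' of the quadrilateral $g=khk^{-1}$; the delicate point -- which I expect to control using the projection complex and quasi-tree of spaces of Bestvina--Bromberg--Fujiwara \cite{BBF} -- is that a barrier on one side cannot fellow-travel the opposite side for a long distance. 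That separation is exactly what permits the straightening, with everything downstream being the bookkeeping above.
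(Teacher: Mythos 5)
Your proposal is correct and follows essentially the same route as the paper: both reduce the theorem to the almost geodesic decomposition of Proposition~\ref{almostgeodform0} (which the paper proves in \textsection\textsection\ref{Section5}--\ref{Section7} via the Bestvina--Bromberg--Fujiwara quasi-tree, exactly as you anticipate) and then conclude with the same convolution-type counting over the conjugator length $m$, invoking purely exponential growth of SCC actions for the quantitative form. The only cosmetic difference is that the paper's target set is $Z=E(f)\cup\mathcal V_{\epsilon,\Delta,\tilde f}$ rather than a barrier-free set alone, but since $E(f)$ is elementary this does not affect your estimates.
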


Therefore, \ref{GenericThm} follows as a direct consequence.
\begin{proof}[Proof  of   \ref{GenericThm}] 
By Theorem \ref{NCisBarrierfree}, the set $\mathcal NC$ of non-contracting elements admits minimal conjugacy representatives in a barrier-free set  $\mathcal V_{\epsilon, M, f}$ for some contracting $f$. Hence,   $\mathcal NC$ is either growth-tight or growth-negligible by the nature of the action, proving \ref{GenericThm}.  
\end{proof}

To give further corollaries of Theorem \ref{tightNC}, we consider a weakly quasi-convex subgroup defined in \cite{YANG10}.    
A subset $X$ in $\mathrm Y$ is called \textit{weakly $M$-quasi-convex} for a constant $M>0$ if for any two points $x,y$ in $X$ there exists a geodesic $\gamma$ between $x$ and $y$ such that $\gamma \subset N_M(X)$. Then a subgroup $\Gamma$ is \textit{weakly quasi-convex} if $\Gamma o$ is weakly  $M$-quasi-convex for some  $M>0$ and$o\in \mathrm Y$.  The following result is proven in \cite[Theorem 4.8]{YANG10}. 
\begin{lem}\label{wqcGTight}
Suppose $G$ admits a proper action on $(\mathrm Y,d)$ with a contracting element. Then  every infinite index weakly quasi-convex subgroup $\Gamma$ of $G$ is contained in a barrier-free set $\mathcal V_{\epsilon, M, f}$ for some contracting element $f$. 
\end{lem}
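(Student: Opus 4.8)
The plan is to show that an infinite index weakly quasi-convex subgroup $\Gamma$ cannot be a net-like sub-structure in $G$ with respect to a suitable collection of translated axes, and then read off that $\Gamma$ is barrier-free. Fix three pairwise independent contracting elements $h_1,h_2,h_3$ of $G$ (these exist since $G$ is non-elementary and has a contracting element), and let $\mathbb F = \{g\ax(h_i): g\in G, i=1,2,3\}$ be the resulting contracting system with bounded intersection, together with the constants $\epsilon_0,\tau,D$ supplied by the extension lemma of \cite{YANG10}. Let $f$ be a fixed contracting element obtained from these (say a suitable high power of $h_1$), and let $\epsilon,M$ be the constants from Theorem \ref{GrowthTightThm}, enlarging $M$ if necessary so that $M>\epsilon_0$ and $M$ exceeds the weak quasi-convexity constant of $\Gamma o$.

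The core step is the following: I would argue by contradiction, assuming some $\gamma_0\in\Gamma$ fails to be $(\epsilon,M,f)$-barrier-free, i.e.\ \emph{every} geodesic between $B(o,M)$ and $B(\gamma_0 o,M)$ contains an $(\epsilon,f)$-barrier. The goal is to leverage the weak quasi-convexity of $\Gamma o$ to iterate barriers along a $\Gamma$-path and thereby produce, via the extension lemma, an admissible quasi-geodesic built out of $\Gamma$-translates of segments that also passes near translated axes of the $h_i$; the bounded intersection of $\mathbb F$ then forces the associated group elements to lie in $\Gamma$ only if the translates $t\ax(f)$ involved are uniformly close to $\Gamma o$, and this uniform fellow-travelling of a contracting set with $\Gamma o$, combined with $\Gamma$ having infinite index, yields a contradiction with properness (only finitely many $t$ can have $t\ax(f)$ within bounded Hausdorff distance of $\Gamma o$, up to the $\Gamma$-action, forcing $\Gamma$ to be finite index). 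Concretely: pick $a,b\in\Gamma$ far apart; the weak quasi-convexity gives a geodesic $\alpha$ from $ao$ to $bo$ inside $N_M(\Gamma o)$; by hypothesis (applied to the translate $a^{-1}b$) $\alpha$ carries an $(\epsilon,f)$-barrier, so there is $t\in G$ with $to,tfo$ within $\epsilon$ of $\alpha\subset N_M(\Gamma o)$. Then $\ax(f)$ (whose Hausdorff distance to $\langle f\rangle o$ is bounded) has a long subsegment $M$-fellow-travelling $\Gamma o$. Repeating along a chain of $\Gamma$-elements marching to infinity produces infinitely many distinct cosets $\Gamma t$ with $t\ax(f)$ uniformly close to $\Gamma o$; but by properness of the action and the bounded-intersection property of the contracting system containing $\ax(f)$ and its translates, only finitely many $\Gamma$-orbits of such $t$ exist, contradiction --- hence $\Gamma$ is finite index.

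The output of this argument is that every $\gamma_0\in\Gamma$ is $(\epsilon,M,f)$-barrier-free, which is precisely $\Gamma\subset\mathcal V_{\epsilon,M,f}$; and since $\Gamma$ acts properly, $\mathcal V_{\epsilon,M,f}$ is the growth-negligible (hence in particular nonempty and honest) set furnished by Theorem \ref{GrowthTightThm}, so the statement follows. An alternative, and probably cleaner, route is to bypass the dynamical contradiction and instead invoke directly the structural characterization behind \cite[Theorem 4.8]{YANG10}: a weakly quasi-convex $\Gamma$ of infinite index fails to contain, for any large $R$, an $R$-separated net meeting a given translated axis $t\ax(f)$ in a long segment --- that is, $\Gamma o$ cannot ``shadow'' an entire long piece of $t\,\langle f\rangle o$ unless $t$ ranges over finitely many cosets. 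One then shows that if $\Gamma\not\subset\mathcal V_{\epsilon,M,f}$ the shadowing \emph{does} happen, contradicting infinite index.

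I expect the main obstacle to be making rigorous the passage from ``every geodesic between $M$-balls contains a barrier'' to ``a long subsegment of a single translated axis $M$-fellow-travels $\Gamma o$'', and then converting that into a statement that finitely many cosets $\Gamma t$ can do this. The barrier gives only that two \emph{points} $to,tfo$ are $\epsilon$-close to $\alpha$; to get a genuinely long portion of the quasi-geodesic $t\langle f\rangle o$ close to $\Gamma o$ one must use the quasi-convexity of $\ax(f)$ (Proposition \ref{Contractions}(\ref{qconvexity})) to fill in between these two points, and then carefully track how the constants ($\epsilon$, the quasi-convexity function $\sigma$, the weak quasi-convexity constant $M$, the bounded-intersection bound $B$) interact so that the count of admissible cosets is genuinely finite --- this is where the properness of the action and a separation-net argument (in the spirit of the omitted Lemma on $R$-separated subsets) must be deployed with care.
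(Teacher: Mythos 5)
There is a genuine gap, and it starts at the very first step: you fix the contracting element $f$ (a high power of $h_1$) \emph{before} looking at $\Gamma$, and then try to prove $\Gamma\subset\mathcal V_{\epsilon,M,f}$. That statement can simply be false for such an $f$. For instance, take $\Gamma=\langle h_1\rangle$ (or any infinite-index weakly quasi-convex subgroup containing $h_1$): this is weakly quasi-convex because $\langle h_1\rangle o$ lies at bounded Hausdorff distance from the contracting quasi-geodesic $\ax(h_1)$, yet for every $n$ the geodesics $[o,h_1^m o]$ with $m\gg n$ fellow-travel $\ax(h_1)$ and therefore contain $(\epsilon,h_1^n)$-barriers, so $\Gamma\not\subset\mathcal V_{\epsilon,M,h_1^n}$ for any $n$. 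The whole point of the lemma is that the barrier element must be chosen \emph{depending on} $\Gamma$, namely so that no translate of it can be shadowed by $\Gamma o$. Your proposed contradiction also does not close: a single $(\epsilon,f)$-barrier on a geodesic inside $N_M(\Gamma o)$ only places the two points $to,tfo$ (hence a segment of bounded length $\approx d(o,fo)$) near $\Gamma o$, which is perfectly compatible with $\Gamma$ having infinite index; and even granting ``finitely many cosets $\Gamma t$ with $t\ax(f)$ near $\Gamma o$'', that does not force $\Gamma$ to have finite index. Your fallback of invoking the characterization ``behind \cite[Theorem 4.8]{YANG10}'' is circular, since that theorem \emph{is} the statement being proved.

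The paper's argument supplies exactly the ingredient your sketch lacks, and it is purely a coset-covering argument rather than a dynamical one. Let $F=\{a\in G: d(o,ao)\le M+\epsilon\}$, which is finite by properness of the action ($M$ being at least the weak quasi-convexity constant of $\Gamma o$). By B.~H.~Neumann's theorem a group cannot be covered by finitely many cosets of infinite-index subgroups, so $G\ne F\Gamma F$ and one may choose the barrier element $g$ (which can be taken contracting) outside $F\Gamma F$. Now if some $h\in\Gamma$ were not $(\epsilon,M,g)$-barrier-free, weak quasi-convexity gives a geodesic $[o,ho]\subset N_M(\Gamma o)$ carrying an $(\epsilon,g)$-barrier $b$; picking $h_1,h_2\in\Gamma$ with $d(bo,h_1o),\,d(bgo,h_2o)\le \epsilon+M$ yields $b^{-1}h_1,\,h_2^{-1}bg\in F$ and hence $g\in F\Gamma F$, a contradiction. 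This three-line computation is what converts ``a barrier exists on a geodesic near $\Gamma o$'' into membership of $g$ in $F\Gamma F$, and the Neumann-type covering theorem is what makes the avoidance of $F\Gamma F$ possible; your proposal contains no substitute for either step, and the extension lemma, admissible paths and bounded intersection it leans on are not needed here.
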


We now prove that hyperbolic elements are exponentially generic in relatively hyperbolic groups, i.e.: 
\begin{proof}[Proof of Theorem \ref{RelHypGeneric}]
 By   \ref{GenericThm}, it remains to show that all parabolic elements is growth-tight. Since there are finitely many conjugacy classes, we only need to consider one maximal parabolic subgroup $P$ and their conjugates. Since $P$ is quasi-convex (cf. \cite[Lemma 3.3]{GePo2}), we see by Lemma \ref{wqcGTight} that $P$ is contained in a barrier-free set   $\mathcal V_{\epsilon, M, f}$ for some contracting $f$. So Theorem \ref{tightNC} implies that all elements conjugated into $P$ is growth-tight. Therefore, the set of hyperbolic elements is exponentially generic.
\end{proof}  
 
By Lemma \ref{wqcGTight}, the following corollary generalizes the previous result in \cite{YANG10} that convex-cocompact subgroups are growth-tight. 
\begin{lem}\label{FMtight2}
The set of elements conjugated into a fixed convex-cocompact subgroup $\Gamma$ of $G\in \mathbb {Mod}$ is growth-tight. 
\end{lem}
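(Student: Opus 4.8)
The plan is to recognize the set of elements conjugated into $\Gamma$ as a subset of the growth-tight set $\mathcal{BF}$ of Theorem \ref{tightNC}, applied to the (SCC) action of $G=\mathbb{Mod}$ on Teichm\"{u}ller space $(\mathrm Y,d)$. Recall that $\mathcal{BF}$ is defined, for a fixed contracting $f$ and the constants $\epsilon,M$ of Theorem \ref{GrowthTightThm}, as the set of elements of $G$ admitting a conjugacy representative inside the barrier-free set $\mathcal V_{\epsilon,M,f}$. Since any subset of a growth-tight set is again growth-tight (its critical exponent is no larger, hence still strictly below $\e G$), it suffices to produce a single contracting element $f\in G$ with $\Gamma\subset\mathcal V_{\epsilon,M,f}$: then every $g\in G$ conjugated into $\Gamma$ has a conjugate lying in $\Gamma\subset\mathcal V_{\epsilon,M,f}$, so $g\in\mathcal{BF}$, and hence the set in question is contained in $\mathcal{BF}$.

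To obtain such an $f$ I would invoke Lemma \ref{wqcGTight}, which does exactly this provided $\Gamma$ is an infinite-index, weakly quasi-convex subgroup of $G$. Weak quasi-convexity follows from convex-cocompactness: by Farb--Mosher's definition \cite{FarbMosher}, some orbit $\Gamma o$ is quasi-convex in the Teichm\"{u}ller metric, and since Teichm\"{u}ller geodesics are unique this is precisely the condition that the geodesic between any two points of $\Gamma o$ lies in $N_M(\Gamma o)$ for a uniform $M$, i.e. that $\Gamma o$ is weakly $M$-quasi-convex. That $\Gamma$ is of infinite index in $G$ is part of the hypothesis in the cases of interest; it is automatic unless $G$ is virtually free (the sporadic surfaces), which may be discarded here since a convex-cocompact subgroup is word-hyperbolic while for non-sporadic surfaces $G$ contains $\mathbb Z^2$.

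Granting these two points, Lemma \ref{wqcGTight} furnishes constants $\epsilon,M>0$ and a contracting $f\in G$ with $\Gamma\subset\mathcal V_{\epsilon,M,f}$, after which Theorem \ref{tightNC} gives that $\mathcal{BF}$ is growth-tight and the subset argument above concludes the proof. The only step that is not pure bookkeeping with the results already recalled is matching the Farb--Mosher notion of convex-cocompactness to the paper's weak quasi-convexity of the orbit in the Teichm\"{u}ller metric; here I would simply cite that a convex-cocompact subgroup has quasi-convex orbit in Teichm\"{u}ller space rather than reprove it, and observe that the constant $M$ so obtained can be enlarged to meet the threshold required by Theorem \ref{GrowthTightThm} at no cost.
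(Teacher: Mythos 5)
Your proposal is correct and follows essentially the paper's own route: the paper derives Lemma \ref{FMtight2} from Lemma \ref{wqcGTight} (convex-cocompactness giving a weakly quasi-convex, infinite-index orbit in Teichm\"{u}ller space, hence $\Gamma\subset\mathcal V_{\epsilon,M,f}$) combined with Theorem \ref{tightNC} applied to the SCC action on Teichm\"{u}ller space, exactly as you do. Your added remarks on infinite index and on matching Farb--Mosher quasi-convexity with weak quasi-convexity are just the details the paper leaves implicit.
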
 

In light of this result, the last statement of Theorem \ref{ModGeneric2} is proved.

\subsection{Proof of Theorem \ref{tightNC}}

By the definition of $g\in  \mathcal {BF}$,   write $$g=khk^{-1}$$ for some  $k \in G$, where $h$ is an $(\epsilon, f)$-barrier-free element. Denote $\gamma=[o, go]$.

The core of the proof is the following propostion whose proof will be given in next three sections. It says that we can  write the element $g=k' \hat g k'^{-1}$ as an ``almost geodesic'' product, where the elements $k', \hat g$ have the  properties stated in the following.  

\begin{prop}[Almost geodesic form]\label{almostgeodform0}
There exist a constant  $\Delta>0$ and an element $\tilde f \in E(f)$ with the following property. Denote $Z:= E(f)\cup \mathcal V_{\epsilon, \Delta, \tilde f}$.  For each $g\in \mathcal BF$, there exist $k'\in G$ and $\hat  g =k'^{-1}gk'\in Z$ and two points $s, t\in \gamma$ such that   
$$\max\{d(k'\cdot o, s), d(k'\cdot \hat g o,  t)\}\le \Delta.$$
\end{prop}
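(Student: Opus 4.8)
The plan is to take the element $g = khk^{-1} \in \mathcal{BF}$, where $h$ is $(\epsilon, f)$-barrier-free, and to massage the "conjugating word" $k$ so that, up to bounded error, the cyclic word representing $g$ decomposes as a genuine admissible path. First I would apply the extension lemma machinery from \cite{YANG10}: pick three pairwise independent contracting elements and a finite set $F$ of representatives as in the Convention, so that one may insert bounded-length barrier elements between $k$, $h$, $k^{-1}$ to turn the quadrilateral $o \to ko \to kho \to kho k^{-1}o = go \to o$ into (the orbit map of) a $(D,\tau)$-admissible path whose contracting pieces are controlled copies of $\ax(f)$ and the auxiliary axes. The geodesic $\gamma = [o, go]$ then $\epsilon_0$-fellow-travels this admissible path by Proposition~\ref{admissible}, so the vertices of the admissible path — in particular the ones marking where $k$ "ends" and where $k^{-1}$ "begins" — sit within $\epsilon_0$ of $\gamma$. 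Those two vertices are the candidates for the points $s, t \in \gamma$, and the portion of the admissible path between them, read as a group element, is the candidate for $\hat g$.

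The next step is to arrange that $\hat g \in Z = E(f) \cup \mathcal{V}_{\epsilon, \Delta, \tilde f}$ for a fixed $\tilde f \in E(f)$ and fixed $\Delta$. Here I would split into two cases according to whether or not the "middle part" $h$ itself carries an $(\epsilon, f^N)$-barrier for a large power $N$ (note $h$ is only assumed $(\epsilon, f)$-barrier-free, not barrier-free for all powers). If $h$ has no such barrier for the relevant window, then after conjugating by the inserted bounded element, $\hat g$ lands in $\mathcal{V}_{\epsilon, \Delta, \tilde f}$ with $\tilde f = f^N$ and $\Delta$ absorbing $\epsilon_0$ and the diameters of the fixed finite sets. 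If $h$ does carry such a barrier, then that barrier — being a long contracting subsegment near $\ax(f)$ — forces, via the admissible-path structure and the bounded-projection property, the conjugating element $k$ to essentially "cancel" against $\ax(f)$, and one shows that $\hat g$ differs by bounded error from an element of $E(f)$; this is where one invokes that $\ax(f)$ is contracting with bounded intersection, so a barrier appearing in both $p$ (labelled by $k$) and in $q$ (labelled by $k^{-1}$, which is the $g$-translate of $p$) cannot overlap in a long subsegment — the key observation flagged as Step~2 in the outline, to be proved in \textsection\textsection\ref{Section6}, \ref{Section7} using the Bestvina--Bromberg--Fujiwara quasi-tree of spaces.

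Concretely the order of operations is: (i) set up the admissible quadrilateral and extract $s, t \in \gamma$ and the tentative $\hat g$ and $k'$ (an initial subword of $k$ together with the inserted barriers); (ii) prove the fellow-traveling estimate $\max\{d(k'o, s), d(k'\hat go, t)\} \le \Delta$ directly from Proposition~\ref{admissible}, which is essentially bookkeeping of constants; (iii) prove the dichotomy $\hat g \in E(f)$ or $\hat g \in \mathcal{V}_{\epsilon, \Delta, \tilde f}$, which requires the "no long common barrier on the two $k$-sides" lemma. The main obstacle is step (iii), and within it the geometric input that a strong barrier in $h$ propagates symmetrically to the two sides $p, q$ of the quadrilateral and yet these two sides cannot run parallel for long — without this, one cannot rule out that the barrier-free-ness of $h$ gets destroyed when passing to $\hat g$, and the whole reduction to the growth-tight set $\mathcal{V}_{\epsilon, \Delta, \tilde f}$ collapses. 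This is precisely why the projection complex / quasi-tree technology of \cite{BBF} is needed and why the proof is deferred over the following three sections.
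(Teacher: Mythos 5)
Your outline correctly isolates the key geometric input --- that a translate of $\ax(f)$ cannot have a long intersection with both sides $p$ and $q$ of the quadrangle unless it is $g$-invariant, proved with the Bestvina--Bromberg--Fujiwara quasi-tree (this is Lemma \ref{sameLevel}) --- but the construction you build around it has a genuine gap. You propose to insert bounded barrier elements between $k$, $h$, $k^{-1}$ via the extension lemma so that the quadrilateral becomes an admissible path fellow-traveled by $\gamma$, and to read off $s$, $t$, $k'$, $\hat g$ from its vertices. Inserting letters changes the group element: $kf_1hf_2k^{-1}\ne g$ in general, so the resulting admissible path does not terminate at $go$ and cannot be fellow-traveled by $\gamma=[o,go]$; the proposition concerns the given $g$ and its geodesic, so no modification of the word is permitted. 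And without insertion, the concatenation of $p$, $\alpha$ and the reverse of $q$ is in general neither admissible nor a quasi-geodesic --- $d(o,go)$ may be far smaller than $2d(o,ko)+d(o,ho)$, so $ko$ and $kho$ need not lie anywhere near $\gamma$ --- which is exactly why the conjugator must be replaced rather than truncated: there is no reason an initial subword of $k$ lands $\Delta$-close to $\gamma$.

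The paper instead proceeds as follows: choose a long $\tilde f\in E(f)$; if $g\in\mathcal V_{\epsilon,\tilde f}$ one may take $k'=1$, so assume $\gamma$ itself carries an $(\epsilon,\tilde f)$-barrier, giving a translate $X=b\ax(f)$ with a long intersection with $\gamma$. Since $h$ is $(\epsilon,f)$-barrier-free, the top side $\alpha$ has uniformly bounded projection on every axis (Lemma \ref{rProjectionEQLem}), so $X$ meets $p\cup q$ in a long segment (Lemma \ref{pqintersect}); Lemma \ref{sameLevel} then forces a dichotomy: either $gX=X$, whence $g\in bE(f)b^{-1}$ and $\hat g\in E(f)$ exactly, with $k'\in bE(f)$ chosen near the entry point of $\gamma$ into $N_C(X)$; or $gX\ne X$, in which case one takes the \emph{last} such $X$ along $p$, uses $q=gp$ to locate $gX$, and shows that the segment of $\gamma$ between the two windows is $(\epsilon,\tilde f)$-barrier-free, giving $\hat g\in\mathcal V_{\epsilon,\Delta,\tilde f}$. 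Your dichotomy (``does $h$ carry an $(\epsilon,f^N)$-barrier'') is attached to the wrong side of the quadrangle and yields neither conclusion: the relevant barrier lives on $\gamma$, the relevant question is the $g$-invariance of its axis $X$, and in the non-invariant case the further analysis of the relative order of the $X$- and $gX$-windows on $\gamma$ is what actually produces the points $s,t$ and the barrier-free middle segment. So the key lemma you flagged is the right one, but the reduction from it to the statement is missing, and the extension-lemma/fellow-traveling scaffold you propose in its place does not work.
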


Assuming Proposition \ref{almostgeodform0}, we complete the proof of  Theorem \ref{tightNC}.

For $\Delta>0$, we consider the annulus $$A(o, n, \Delta)=\{g\in G: |d(o, go)-n|\le\Delta\},$$
and the following holds for any $\Gamma\subset G$,
\begin{equation}\label{criticalexpo}
\limsup_{n\to \infty}\frac{\log \sharp \left(A(o, n, \Delta)\cap \Gamma\right)}{n} = \e \Gamma.
\end{equation}

Denote the distance $l:=d(s, t)$. We thus have
$$|d(o, \hat go) - l |  \le 2\Delta,$$
so $\hat g \in  A(o, l, 2\Delta) \cap Z$ and $$|2d(o, k'o)-n+l|    \le 4\Delta,$$
so $k'\in A(o, (n-l)/2, 2\Delta)$.

Consequently, for each $n\ge 1$,  the number of elements $g\in A(o, n, \Delta) \cap \mathcal BF$   is upper bounded by 
\begin{equation}\label{SUMEQ}
\sum_{1\le l\le n}  \sharp \big(A(o, (n-l)/2, 2\Delta)\big) \cdot \sharp \big(A(o, l, 2\Delta) \cap Z)\big).
\end{equation}
Recall that $E(f)$ is elementary, i.e.: virtually $Z$. 

If the action is SCC, then the set $Z:=\big (E(f)\cup \mathcal V_{\epsilon, \Delta, \tilde f}\big)$ is a growth-tight set with exponent $\omega_1 <\e G$ such that $$\sharp (A(o, l, 2\Delta) \cap Z) \prec \exp(\omega_1 l)$$ for $l>0$.  
Otherwise, if the action is proper, then $Z$ is  growth-negligible: 
$$\frac{\sharp (A(o, l, 2\Delta) \cap Z)}{\exp(\e G l)}\to 0.$$

By definition of $\e G$ in (\ref{criticalexpo}),  there exist a constant  $\omega_2<2\cdot \e G$ such that  $$ \sharp \big(A(o, (n-l)/2, 2\Delta)\big) \prec \exp(\omega_2(n-l)/2).$$ 

\textbf{1).} For SCC actions, the above sum (\ref{SUMEQ}) is upper bounded by
$$
\begin{array}{lll}
\sum_{1\le l\le n}   c \cdot \exp\big(\e G(n-l)/2\big) \cdot \exp(\omega_1 l) \le c\cdot n\cdot \exp(\omega_0 n)  
\end{array}
$$
where $\omega_0:=\max\{\omega_2/2, \omega_1 \}<\e G$. A direct computation shows that $\mathcal BF$ is growth-tight: 
$$\e { A(o, n, \Delta) \cap \mathcal NC} \le \lim_{n\to \infty} \frac{\log\big ( c\cdot n\cdot \exp(\omega_0 n)\big)}{n} \le \omega_0<\e G.$$

\textbf{2).} For proper actions, the above sum (\ref{SUMEQ})  is bounded by
$$
\begin{array}{lll}
&\sum_{1\le l\le n}   c \cdot \exp\big(\e G(n-l)/2\big) \cdot  \sharp \big(A(o, l, 2\Delta) \cap Z)\big)\\
\\
\le & \exp(\omega_2(n-l)/2) \cdot   o(\e G l)  \le   o(\e G n)  
\end{array}
$$
so $\mathcal BF$ is growth-negligible.   

Therefore, the theorem  is proved, modulo Proposition \ref{almostgeodform0} whose proof  will take up the next three sections.

\section{More preliminary: Projection complex and a quasi-tree of spaces}\label{Section5}
The purpose of this section is to   recall a  construction, due to Bestvina, Bromberg and Fujiwara,  of  \textit{projection complex} and correspondingly,  \textit{a quasi-tree of spaces}, a blown-up of it. We assume certain familiarity with their construction and refer to  \cite{BBF} for details.  

In our concrete setting, let us just point out that a  contracting system  $\mathbb X$ with bounded intersection  satisfies the axioms in \cite{BBF} (cf. \cite[Appendix]{YANG7} for this fact). In what follows, we examine their construction and derive a few consequences in this specific setting. 

We first introduce a notion of interval in $\mathbb X$ (cf. \cite[Theorem 2.3.G]{BBF}).   For $K>0$, denote $\mathbb X_K(Y, Z)$ by the set of $W\in \mathbb X\setminus \{Y, Z\}$ such that $\proj_W(Y, Z) > K$. For two points $y, z\in \cup \mathbb X$, the set $\mathbb X_K(y, z)$ is defined as the collection of $W\in \mathbb X$ such that $\proj_W(y, z) > K$. 

\paragraph{\textbf{Projection complex}} The \textit{projection complex} $\mathcal P_K(\mathbb X)$ is a graph such that the vertex set is $\mathbb X$, and two vertices $Y\ne Z\in \mathbb X$ is \textit{adjacent} if $\mathbb X_K(Y, Z)=\emptyset$.

In fact, their construction requires a slightly variant, $d_X(Y, Z)$, of the distance-like function $\proj_X(Y, Z)$. However,   this does not matter in the interest of the present paper since $\proj_X(Y, Z)\sim d_X(Y,Z)$ by \cite[Theorem 2.3.B]{BBF}.

Furthermore, assume that $\mathbb X$ is preserved by a group action of $G$ on $\mathrm Y$. Their fundamental result is then stated as follows.

\begin{thm}\cite[Theorem D]{BBF}\label{projectioncplx}
There exists $K\gg 0$ such that $\mathcal P_K(\mathbb X)$ is a quasi-tree on which $G$ acts co-boundedly.
\end{thm}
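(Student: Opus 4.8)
This is \cite[Theorem D]{BBF}; the plan is to recall its proof in the concrete setting at hand, where $\mathbb X$ is a $G$-invariant contracting system with bounded intersection (so the \cite{BBF} axioms hold, cf.\ \cite[Appendix]{YANG7}) and, in the applications, has only finitely many $G$-orbits. The structural input I would start from is the total order on intervals, \cite[Theorem 2.3.G]{BBF}: for $K\gg 0$ there is a constant $\tau>0$ so that for each pair $Y\ne Z\in\mathbb X$ the set $\mathbb X_K(Y,Z)\cup\{Y,Z\}$ carries a total order with least element $Y$ and greatest element $Z$, say $Y=W_0<W_1<\cdots<W_m<W_{m+1}=Z$, with $\proj_{W_j}(Y,W_i)\le\tau$ and $\proj_{W_i}(W_j,Z)\le\tau$ whenever $i<j$. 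From this I must extract two things: that $\mathcal P_K(\mathbb X)$ is connected with $d(Y,Z)\asymp \sharp\,\mathbb X_K(Y,Z)$, and that it satisfies Manning's bottleneck characterization of quasi-trees.

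For connectedness, since no member of $\mathbb X$ lies strictly between $W_i$ and $W_{i+1}$ in the order and the projections of $W_i,W_{i+1}$ to every other member are at most $\tau<K$, one gets $\mathbb X_K(W_i,W_{i+1})=\emptyset$; hence the $W_i$ form an edge path and $d(Y,Z)\le m+1$, while the matching lower bound $d(Y,Z)\succ m$ is the distance estimate of \cite[Theorem 2.3.B]{BBF} in this language. In particular the standard path $W_0W_1\cdots W_{m+1}$ is a quasi-geodesic and $W:=W_{\lfloor m/2\rfloor}$ is a coarse midpoint of $Y$ and $Z$ in $\mathcal P_K(\mathbb X)$.

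The core step is the bottleneck property: there should be $\Delta>0$, independent of $Y$ and $Z$, such that every edge path $\sigma$ from $Y$ to $Z$ meets $B(W,\Delta)$. I would deduce this from the projection-complex analogue of the bounded geodesic image principle — a path in $\mathcal P_K(\mathbb X)$ all of whose vertices stay $\Delta$-far from $W$ has $\proj_W$ of its two endpoints bounded by a constant $\tau'=\tau'(\Delta)$ — by choosing $K>\tau'$: then a path $\sigma$ avoiding $B(W,\Delta)$ would give $\proj_W(Y,Z)\le\tau'<K$, contradicting $W\in\mathbb X_K(Y,Z)$. Manning's criterion then yields that $\mathcal P_K(\mathbb X)$ is quasi-isometric to a tree. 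For co-boundedness, let $Y_1,\dots,Y_n$ be representatives of the finitely many $G$-orbits in $\mathbb X$ and put $R:=1+\max_{i,j}d(Y_i,Y_j)$, which is finite by connectedness; then every vertex, hence every point, of $\mathcal P_K(\mathbb X)$ lies in $G\cdot B(Y_1,R)$, so $G$ acts co-boundedly.

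The step I expect to be the main obstacle is the bounded-geodesic-image estimate underlying the bottleneck property. The subtlety is that the crude triangle inequality $\proj_W(A,C)\le\proj_W(A,B)+\proj_W(B,C)$ only yields a bound growing with the length of $\sigma$; one must instead use the total order on the intervals to show that all vertices of a path avoiding $B(W,\Delta)$ project into a single bounded-diameter subset of $W$, so that no accumulation occurs. Making this rigorous requires a careful use of the Behrstock-type inequality among members of $\mathbb X$ and arranging the constants in the right order, namely $\Delta$ large relative to the axiom constants and $K$ large relative to the resulting $\tau'(\Delta)$.
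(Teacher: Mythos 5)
The first thing to note is that the paper does not prove this statement at all: it is quoted verbatim from \cite[Theorem D]{BBF} and used as a black box, so the ``proof'' in the paper is the citation itself. Your proposal is therefore an attempted reconstruction of Bestvina--Bromberg--Fujiwara's argument, and as such it correctly identifies the skeleton (the total order on $\mathbb X_K(Y,Z)$ from \cite[Theorem 2.3.G]{BBF}, the distance formula $d(Y,Z)\asymp \sharp\,\mathbb X_K(Y,Z)$, Manning's bottleneck criterion, and coboundedness from the finitely many $G$-orbits of $\mathbb X$ -- the last point being the only place where the present paper's specific setting matters, since $\mathbb X=\{g\ax(f):g\in G\}$ is a single orbit).

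However, as a proof the proposal has a genuine gap, and you have located it yourself: the ``bounded geodesic image principle'' for $\mathcal P_K(\mathbb X)$ -- that a path avoiding a $\Delta$-ball around $W$ has uniformly bounded $\proj_W$ of its endpoints, equivalently that every path from $Y$ to $Z$ must pass through (or uniformly near) each $W\in\mathbb X_{K}(Y,Z)$ -- is not a principle you may invoke; it is essentially the content of the theorem, and in \cite{BBF} it is exactly what the analysis of standard paths establishes (via the Behrstock-type inequality and a careful two-constant argument, showing $W$ is in effect a cut vertex for suitable thresholds). Asserting it and then ``choosing $K>\tau'$'' is circular as written. A smaller but real issue is the connectedness step: for consecutive $W_i<W_{i+1}$ in the order, the claim $\mathbb X_K(W_i,W_{i+1})=\emptyset$ does not follow from ``projections to every other member are at most $\tau$'' -- the order statement only bounds projections onto members \emph{of} the interval, whereas here one must bound $\proj_W(W_i,W_{i+1})$ for $W$ \emph{outside} $\mathbb X_K(Y,Z)$; in \cite{BBF} this is handled by an induction (adjacency of $Y$ to the least element of the interval) together with a change of constants, not by the one-line argument you give. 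So the proposal is an acceptable roadmap to the cited result, but it does not constitute a proof; for the purposes of this paper the correct move is simply to cite \cite[Theorem D]{BBF}, as the author does.
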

 
\paragraph{\textbf{Quasi-tree of spaces}} Following the adjacency in  $\mathcal P_K(\mathbb X)$, a quasi-tree of spaces $\mathbb X$ is constructed   to recover the geometry of each $X\in \mathbb X$ in $\mathcal P_K(\mathbb X)$ where they were condensed to be one point.   

For a constant $N>0$,   a \textit{quasi-tree $\mathcal C_N(\mathbb X)$  of spaces}  is obtained by taking the disjoint union of $\mathbb X$ with   edges of length $N$ connecting each pair of points $(y, z)$ in $(\pi_Y(Z), \pi_{Z}(Y))$ if $\mathbb X_K(Y, Z)=\emptyset$. We denote by  $d_{\mathcal C}$ the induced length metric.

A technical issue is that $X\in \mathbb X$ might not be connected  or even so, the induced metric on $X$ may differ from the one on the ambient space $\mathrm Y$. Since $X$ is quasi-convex, this could be overcome by taking a $C$-neighborhood of $X$ such that any geodesic with endpoints in $X$ lies in $N_C(X)$. It is readily seen that $N_C(X)$ is connected and its induced metric agrees with $d_{\mathrm Y}$ up to a uniform additive error (for instance $4C$). For convenience, each $X\in \mathbb X$ is assumed to be a metric graph by its the Vietoris-Rips complex. See discussion \cite[Section 3.1]{BBF}. 

\begin{thm}\cite[Theorem E]{BBF}\label{quasitreespaces}
For $N\gg K$, $\mathcal C_N(\mathbb X)$ contains $\mathbb X$ as totally geodesic subspaces and for any two $Y, Z\subset \mathcal C_N(\mathbb X)$, the shortest projection of $Y$ to $Z$ in $\mathcal C_N(\mathbb X)$  is uniformly close to the set $\pi_Z(Y)$.
Moreover, if every $X\in \mathbb X$ are uniformly hyperbolic spaces, then  $\mathcal C_N(\mathbb X)$ is hyperbolic. 
\end{thm}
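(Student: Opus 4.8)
The plan is to reduce everything to the general theory of Bestvina--Bromberg--Fujiwara \cite{BBF}. The only point that requires the present setup is that a $C$-contracting system $\mathbb X$ with $\mathcal R$-bounded intersection satisfies their projection axioms for the functions $\proj_X(\cdot,\cdot)$ (equivalently the $d_X(\cdot,\cdot)$, by \cite[Theorem 2.3.B]{BBF}); this was recorded in \cite[Appendix]{YANG7} and recalled above. Granting this, Sections~3--4 of \cite{BBF} apply verbatim, and the task is to unwind what each assertion means in our language.

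First, for the \emph{totally geodesic} claim I would fix $N$ large compared to $K$ and to the implicit constants of the axioms, and show that any $d_{\mathcal C}$-geodesic joining two points $y,z$ of a fixed $X\in\mathbb X$ stays in $N_C(X)$. After replacing each $X$ by its metric-graph ($C$-neighborhood) model, so that its induced metric agrees with $d_{\mathrm Y}$ up to a uniform additive error, the key is that the projections into $X$ of the pieces adjacent to $X$ in $\mathcal P_K(\mathbb X)$ are pairwise uniformly close; hence any excursion of the path out of $X$ makes essentially no progress inside $X$ while costing at least the two glueing edges of length $N$ that it traverses. Once $N$ is large, excising such an excursion strictly shortens the path, so a geodesic cannot leave $X$, and $X\hookrightarrow\mathcal C_N(\mathbb X)$ is isometric with its geodesics contained in it.

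Second, for the \emph{projection} statement, given $Y,Z$ one forms the interval $Y=Y_0<Y_1<\cdots<Y_n=Z$ inside $\mathbb X_K(Y,Z)\cup\{Y,Z\}$ via the standard-path construction of \cite[Theorem 2.3.G]{BBF}, along which consecutive members have uniformly small mutual projections. A $d_{\mathcal C}$-geodesic from a point of $Y$ to a point of $Z$ passes within bounded distance of each $Y_i$, and in particular enters $Z$ near $\pi_Z(Y_{n-1})$; the Behrstock-type inequality coming from bounded intersection then places $\pi_Z(Y_{n-1})$ within a uniform distance of $\pi_Z(Y)$, so the shortest $\mathcal C_N(\mathbb X)$-projection of $Y$ to $Z$ is uniformly Hausdorff-close to $\pi_Z(Y)$. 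For \emph{hyperbolicity} under the extra hypothesis that the $X\in\mathbb X$ are uniformly $\delta$-hyperbolic, I would run the combination argument of \cite[Section 4]{BBF}: a geodesic triangle in $\mathcal C_N(\mathbb X)$ projects to a uniformly thin triangle in the quasi-tree $\mathcal P_K(\mathbb X)$ (Theorem~\ref{projectioncplx}); the portion of the triangle lying in a single piece $X$ is, up to bounded error, a geodesic triangle in $X$, hence thin with a constant independent of $X$; and the glueing edges contribute only bounded-diameter projections, so they cannot spoil thinness. Assembling these contributions yields a uniform thinness constant for $\mathcal C_N(\mathbb X)$.

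The genuine work, and the main obstacle, is the uniform bookkeeping of additive errors --- those from passing to the metric-graph models of the $X$'s, from the comparison $\proj_X\sim d_X$, and from the axioms themselves --- which must all be absorbed into constants independent of the chosen $X,Y,Z$ and of the triangle. This is precisely the content of \cite{BBF}, so once the axioms are verified the proof is effectively a citation, and I would present it as such while indicating the three steps above.
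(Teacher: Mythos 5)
The paper offers no proof of this statement: it is quoted directly as \cite[Theorem E]{BBF}, with the only input specific to this setting being the (also cited) fact that a contracting system with bounded intersection satisfies the BBF projection axioms. Your proposal does exactly the same reduction — verify the axioms, then invoke \cite{BBF} — and your sketch of the internal BBF arguments (excising excursions for total geodesy, standard paths plus the Behrstock inequality for projections, and the thin-triangle assembly for hyperbolicity) is consistent with how those results are actually proved, so the proposal is correct and matches the paper's approach.
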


In a hyperbolic space, a totally geodesic subspace is quasiconvex so it is contracting. Recall that the bounded intersection property is equivalent to the bounded projection property in \cite[Lemma 2.3]{YANG6}. Hence, by Theorem \ref{quasitreespaces}, $\mathbb X$ is a contracting system with bounded intersection in $\mathcal C_N(\mathbb X)$.

At last, the following result shall be important in next section.

\begin{lem}\cite[Lemma 3.11]{BBF} \label{standardpath}
There are $\tilde K>K, R > 0$ so that for any $y$ and $z$ in $\mathcal C_N(\mathbb X)$, any geodesic $[y, z]$ passes within $R$-neighborhood of $\pi_X(y)$ and $\pi_X(z)$ for each $X \in \mathbb X_{\tilde K}(y, z)$.
\end{lem}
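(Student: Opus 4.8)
The plan is to carry out the whole argument inside the quasi-tree of spaces $\mathcal C_N(\mathbb X)$ for $N\gg K$, where Theorem \ref{quasitreespaces} is available. First I would assemble the structural input. In the situation of interest each $X\in\mathbb X$ is a contracting quasi-geodesic, hence a quasi-line and in particular a uniformly hyperbolic space, so by Theorem \ref{quasitreespaces} the ambient space $\mathcal C_N(\mathbb X)$ is $\delta$-hyperbolic and each $X$ is a totally geodesic (so $0$-quasi-convex) subspace of it. In a $\delta$-hyperbolic space a $0$-quasi-convex set is $C_0$-contracting with $C_0=C_0(\delta)$, uniformly over $\mathbb X$, so Proposition \ref{Contractions} applies uniformly to the family $\mathbb X\subset\mathcal C_N(\mathbb X)$. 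I would also record, again from Theorem \ref{quasitreespaces}, that the closest-point projection onto $X$ in $\mathcal C_N(\mathbb X)$ is uniformly Hausdorff-close to the set $\pi_X$; hence it suffices to establish the conclusion with $\pi_X$ replaced by this intrinsic projection, enlarging $R$ by a universal constant at the end. (If one prefers not to assume the pieces are hyperbolic, one can instead follow Bestvina-Bromberg-Fujiwara and argue via standard paths; see the last paragraph.)

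The heart of the proof is an entry/exit point estimate. Put $C:=C_0$, fix $y,z$ and a geodesic $[y,z]$, and suppose $[y,z]\cap N_C(X)\ne\emptyset$. Let $p$ and $q$ be the entry and exit points of $[y,z]$ into $N_C(X)$ (in the sense of \S\ref{ConvSection}). On the sub-geodesic $[y,p]_{[y,z]}$ all points but $p$ stay at distance $\ge C_0$ from $X$, so the contracting property bounds $\proj_X([y,p]_{[y,z]})$ by $C_0$; since $y$ and $p$ both lie on this sub-geodesic and $d(p,X)\le C$, a short computation then gives $d(p,\pi_X(y))=O(C_0)$. Symmetrically $d(q,\pi_X(z))=O(C_0)$. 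Thus, as soon as $[y,z]$ meets $N_C(X)$, it passes within a uniform distance $R$ of both $\pi_X(y)$ and $\pi_X(z)$; this $R$ (together with the Hausdorff slack from the previous paragraph) will be the constant in the statement.

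It then remains to choose $\tilde K$ so that $X\in\mathbb X_{\tilde K}(y,z)$ forces $[y,z]$ to meet $N_C(X)$. For this I would use the contrapositive: if $[y,z]\cap N_C(X)=\emptyset$ then $d([y,z],X)\ge C_0$, so the contracting property gives $\proj_X([y,z])\le C_0$, and since $y,z\in[y,z]$ we get $\proj_X(y,z)\le C_0$. Hence taking $\tilde K:=\max\{K,C_0\}+1$, every $X\in\mathbb X_{\tilde K}(y,z)$ satisfies $\proj_X(y,z)>C_0$, so $[y,z]$ must enter $N_C(X)$, and the preceding paragraph finishes the argument.

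I do not expect a serious obstacle here: the only real care is keeping all constants ($\delta$, the quasi-convexity/contraction constant $C_0$, and the Hausdorff constant of Theorem \ref{quasitreespaces}) uniform over the whole family $\mathbb X$, and handling the case where $X$ is not connected a priori — which is already circumvented in the excerpt by thickening $X$ to a Vietoris-Rips complex, a move I would simply invoke. The alternative combinatorial route of Bestvina-Bromberg-Fujiwara replaces the hyperbolicity input by the fact that a standard path from $y$ to $z$ is a uniform quasi-geodesic of $\mathcal C_N(\mathbb X)$ that, for each deep $X$, enters and exits $X$ within bounded distance of $\pi_X(y)$ and $\pi_X(z)$; stability of quasi-geodesics in the quasi-tree $\mathcal C_N(\mathbb X)$ then transfers this to an arbitrary geodesic $[y,z]$.
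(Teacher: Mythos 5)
The paper offers no proof of this lemma --- it is quoted verbatim from Bestvina--Bromberg--Fujiwara --- so there is nothing internal to compare against; your argument is a genuinely different, self-contained route, and it is essentially correct in the setting this paper actually uses. Your proof leans on the fact that here the pieces $g\cdot\ax(f)$ are uniform quasi-lines, so by Theorem \ref{quasitreespaces} the space $\mathcal C_N(\mathbb X)$ is hyperbolic and each $X$ is totally geodesic, hence uniformly contracting in $\mathcal C_N(\mathbb X)$ (a step the paper itself endorses in the paragraph following Theorem \ref{quasitreespaces}). From there the two steps --- (i) $\proj_X(y,z)>C_0$ forces $[y,z]$ to enter $N_{C_0}(X)$, by the contrapositive of the contracting property, and (ii) the entry point $p$ satisfies $d(p,\pi_X(y))\le 2C_0$, via the contraction estimate applied to $[y,p_\varepsilon]$ and letting $\varepsilon\to 0$ --- are the standard Morse-type estimates and are sound. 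By contrast, BBF's own proof is combinatorial, going through standard paths and the total order on $\mathbb X_K(Y,Z)$, and requires no hyperbolicity of $\mathcal C_N(\mathbb X)$; their version is therefore more general, while yours is shorter given the contracting machinery already set up in \textsection\ref{Section2}. The only points you should make fully explicit are the ones you already flag: the lemma's hypothesis $X\in\mathbb X_{\tilde K}(y,z)$ and its conclusion are phrased with the $\mathrm Y$-projection $\pi_X$, whereas your contraction argument lives in $d_{\mathcal C}$, so you must invoke the uniform comparability of the two projections (the second assertion of Theorem \ref{quasitreespaces}, extended from sets $Y\in\mathbb X$ to points, which is part of the BBF toolkit) both when choosing $\tilde K$ and when converting ``close to $\pi_X^{\mathcal C}(y)$'' back to ``close to $\pi_X(y)$''. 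These are bookkeeping issues, not gaps.
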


\section{{Projecting in the quasi-tree of spaces}} \label{Section6}

Let $\mathbb X:=\{g\cdot\ax(f): g\in G\}$ be the collection  of contracting sets with  bounded intersection. Denote by the same $C>0$ the contraction    and   bounded intersection constants for $\mathbb X$. According to Section \ref{Section5}, we consider a projection complex $\mathcal P_K(\mathbb X)$ and its quasi-tree of spaces $\mathcal C_N(\mathbb X)$ endowed with   length metric $d_{\mathcal C}$.

Consider the quadrangle $\square_{g=khk^{-1}}$   by four (oriented) geodesics $\gamma=[o, go]$, $p=[o, ko],  \alpha=[ko, kho]$ and $q=[go, kho]$ as dicpicted in Figure \ref{fig:thm68}. The important relation  $q=g\cdot p$ will be implicit in our discussion.

We now give an overview of this section. After some preliminary observations, we project the quadrangle $\square_{g=khk^{-1}}$ into  $\mathcal C_N(\mathbb X)$ and show that  the top geodesic $\alpha$ becomes uniformly bounded. 

Since the goal of Theorem \ref{tightNC} is to prove that $\mathcal {BF}$ is a growth-tight (resp.  growth-negligible) set,  without loss of generality,   we can assume    $g \notin \mathcal V_{\epsilon, \tilde f}$ for some $\tilde f \in E(f)$: the bottom geodesic $\gamma$ contains an $(\epsilon, \tilde f)$-barrier. The  element  $\tilde f$ will be made sufficiently  ``long'' in a quantitative sense. 

Note that any $(\epsilon, \tilde f)$-barrier in  $\gamma$ gives rise to a ``sufficiently long'' barrier in the left side $p$, for $\tilde f$ is relatively longer. By looking at the projected quadrangle in $\mathcal C_N(\mathbb X)$, the hyperbolicity of $\mathcal C_N(\mathbb X)$ (cf. Lemma \ref{standardpath}) allows to argue that this long barrier in the left side $p$ has to  \textbf{intersect boundedly} with the right side $q$. This is the goal of this section, Lemma \ref{sameLevel}, which provides the base of the further analysis in next Section. 

\subsection{Some auxiliary lemmas}
We begin with an elementary observation facilitating some computations.
\begin{lem}\label{pcapXbound}
For any $X\in \mathbb X$ and any geodesic $p$, the following holds:
$$
\proj_X(p)\le   4C+\diam{N_C(X)\cap p}.
$$
\end{lem}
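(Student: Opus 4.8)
The plan is to split the projection $\pi_X(p)$ according to whether a point of $p$ is within distance $C$ of $X$ or not, and to use the $C$-contracting property of $X$ on the complementary part. Concretely, write $p' = p \cap N_C(X)$ (the portion of $p$ that enters the $C$-neighborhood of $X$) and consider the closure of $p \setminus N_C(X)$, which is a union of subpaths $p_1, p_2, \dots$ of $p$ each staying at distance $\ge C$ from $X$. Since every point of $p'$ projects to within distance $C$ of itself, we have $\diam{\pi_X(p')} \le \diam{N_C(X)\cap p} + 2C$; I would in fact be slightly more careful and note that points of $p'$ lie in $N_C(X)\cap p$ so their projections lie within $C$ of that set, giving $\diam{\pi_X(p')}\le \diam{N_C(X)\cap p}+2C$.

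Next I would handle each subpath $p_i$ lying outside $N_C(X)$. If $p_i$ is a geodesic with $d(p_i, X)\ge C$, then the $C$-contracting Definition \ref{ContrDefn} applied to $p_i$ gives $\proj_X(p_i)\le C$. The key point is that there are at most two such ``outer'' subpaths that matter for the overall diameter, namely the ones adjacent to (i.e.\ before and after) the single connected block $p'$ — more precisely, since $p$ is a geodesic, $N_C(X)\cap p$ need not be connected, but the subpaths of $p$ between consecutive components of $N_C(X)\cap p$ have their projections already controlled because their endpoints are in $N_C(X)$, so only the initial segment of $p$ before it first meets $N_C(X)$ and the terminal segment after it last leaves $N_C(X)$ contribute genuinely new projection diameter, each bounded by $C$ via contraction. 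Summing: $\proj_X(p) \le \diam{\pi_X(p')} + 2C \le \diam{N_C(X)\cap p} + 4C$, which is the claimed bound (the constant $4C$ being comfortably generous).

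The only mild subtlety — and the step I would be most careful about — is the case analysis when $N_C(X)\cap p$ is empty or when $p$ lies entirely inside $N_C(X)$: in the first case $p$ itself is a geodesic at distance $\ge C$ from $X$ so $\proj_X(p)\le C \le 4C$, and in the second case $\proj_X(p)$ is within $2C$ of $\diam{N_C(X)\cap p}=\len(p)$, again within the bound. I do not expect any genuine obstacle here; it is an elementary bookkeeping argument combining the definition of contracting with the triangle inequality for the diameter of a projection, and the slack in the constant $4C$ absorbs the various $C$'s and $2C$'s that arise.
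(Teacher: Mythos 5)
Your proof is correct and follows essentially the same route as the paper: decompose $p$ into the portion meeting $N_C(X)$ and the geodesic subsegments lying outside it, apply the $C$-contracting property to the outer segments, and add up the resulting constants ($2C$ from projecting points of $N_C(X)\cap p$, $2C$ from the outer pieces). If anything, your explicit handling of the excursions between consecutive components of $N_C(X)\cap p$ is slightly more careful than the paper's proof, which only singles out the segment before the first entry and the segment after the last exit.
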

\begin{proof}
Denote by $p_1$ the part of $p$ before entering into $N_C(X)$, and by $p_2$ the part of $p$ after exiting $N_C(X)$. It is possible that $p_1, p_2$ may be trivial paths.  Hence, the proof is completed by a projection argument as follows:  
$$
\begin{array}{lll}
\proj_{X}(p) &\le \proj_X(p_1\cup p_2)+d((p_1)_+, X)+d((p_2)_-, X)+\diam{N_C(X)\cap p}\\
&\le 4C+\diam{N_C(X)\cap p},
\end{array}
$$
where $\proj_X(p_1\cup p_2) \le 2C$ follows by contracting property, and $$d((p_1)_+, X), d((p_2)_-, X) \le C$$ since $(p_1)_+$ and $(p_2)_-$ both belong to $N_C(X)$. 
\end{proof}

\begin{lem}[Long intersection $\Rightarrow$ Existence of barrier]\label{barrierexists}
There exist  $\epsilon>0$ and $L =L(f)>0$ such that if $\alpha$ is a geodesic satisfying   $$\diam{\alpha \cap N_C(\ax(f))}> L,$$ then it contains an $(\epsilon, f)$-barrier.
\end{lem}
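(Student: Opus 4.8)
The statement asserts: long fellow-travelling of a geodesic $\alpha$ with $N_C(\ax(f))$ forces an $(\epsilon, f)$-barrier in $\alpha$. Recall an $(\epsilon, f)$-barrier means there is $h\in G$ with $d(ho,\alpha),\,d(hfo,\alpha)\le\epsilon$. The natural idea is: the axis $\ax(f) = E(f)\cdot o$ carries the orbit of $\langle f\rangle$, which is a quasi-geodesic inside $\ax(f)$; if $\alpha$ stays $C$-close to $\ax(f)$ over a long subsegment, then $\alpha$ must shadow a long portion of that orbit, and hence pass $\epsilon$-close to two consecutive-enough orbit points $h o$ and $h f o = h\cdot f\cdot o$ for some $h\in E(f)$ (or $\langle f\rangle$), giving the barrier.

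\textbf{Step 1: fellow-travelling along the axis.} By Proposition~\ref{Contractions}\eqref{qconvexity}, $\ax(f)$ is $\sigma$-quasi-convex, and more relevantly $\ax(f)$ (being a $\langle f\rangle$-orbit up to finite index, hence an $(c,c)$-quasi-geodesic by \eqref{QIEmbed}) has finite Hausdorff distance to a genuine geodesic $\beta$. So the subpath $\alpha' := \alpha\cap N_C(\ax(f))$, which is a geodesic of length $>L$ with both endpoints within $C$ of $\ax(f)$, lies within a uniform neighborhood $N_{D_0}(\beta)$ for $D_0 = D_0(C,c)$ by the Morse-type stability of quasi-geodesics near geodesics in the presence of the contracting hypothesis (this is exactly the kind of comparison carried out in the Claim inside the proof of Proposition~\ref{Contractions}). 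Conversely $\beta$'s relevant subsegment lies in $N_{D_0}(\alpha')$.

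\textbf{Step 2: extracting the barrier.} Parametrize $\ax(f)$ by the orbit $\{f^k o : k\in\mathbb Z\}$ (or by $E(f)\cdot o$); since $k\mapsto f^k o$ is a quasi-isometric embedding, consecutive points $f^k o, f^{k+1} o$ are at bounded distance $\le \Lambda := d(o,fo)$ and the orbit progresses along $\ax(f)$ at a definite linear rate. Choosing $L$ large enough (depending on $\Lambda$, $D_0$, $c$, $C$), the segment $\alpha'$ is long enough that, travelling along $\beta\subset N_{D_0}(\alpha')$, one encounters a pair of orbit points $h o := f^k o$ and $h f o := f^{k+1} o$ with both $f^k o$ and $f^{k+1}o$ lying in $N_{D_0 + C}(\alpha') \subseteq N_{D_0+C}(\alpha)$. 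Setting $\epsilon := D_0 + C$ and $h := f^k \in \langle f\rangle \subset E(f)$ gives $d(ho,\alpha), d(hfo,\alpha)\le\epsilon$, which is precisely an $(\epsilon,f)$-barrier. The constant $L$ depends only on $f$ (through $\Lambda, c, C$) as required, and $\epsilon$ is a universal-type constant depending on the contracting data.

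\textbf{Main obstacle.} The delicate point is Step~1: guaranteeing that a long \emph{geodesic} subsegment whose \emph{endpoints} are $C$-close to $\ax(f)$ actually \emph{tracks} $\ax(f)$ uniformly along its whole length — a priori it could wander away in the middle. This requires the contracting/Morse property of $\ax(f)$: a geodesic with endpoints near a contracting set stays uniformly near it, which is the content of quasi-convexity in Proposition~\ref{Contractions}\eqref{qconvexity} applied with the endpoints pushed onto $\ax(f)$ at cost $2C$. Once uniform fellow-travelling is secured, the combinatorial extraction of the two orbit points in Step~2 is routine bookkeeping with the quasi-isometry constants. I would also double-check the direction of the neighborhood inclusions so that the barrier witness $ho, hfo$ land near $\alpha$ itself (not merely near $\beta$), which is why the final $\epsilon$ absorbs both $D_0$ and $C$.
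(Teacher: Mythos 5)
Your proof is correct and follows essentially the same route as the paper: extract a long subsegment of $\alpha$ whose endpoints are $C$-close to $\ax(f)$, use the contracting/quasi-convexity properties of Proposition \ref{Contractions} to get uniform two-sided fellow-travelling with the axis, and then, for $L$ large compared to $d(o,fo)$, locate a pair $ho, hfo$ (with $h\in \langle f\rangle\subset E(f)$) within a uniform $\epsilon$ of $\alpha$, which is exactly the barrier. The only cosmetic point is that $\alpha\cap N_C(\ax(f))$ need not be connected, so one should take the geodesic subsegment of $\alpha$ between two intersection points at distance $>L$, as the paper implicitly does.
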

\begin{proof}
Recall that $\gamma:=\ax(f)$ is  a contracting quasi-geodesic.   By hypothesis, we see that $\alpha$ contains a subpath $\bar \alpha$ of length at least $L$ which has finite Hausdorff distance to a subpath $\bar \gamma$ of $\gamma$.  By Assertions (\ref{subpath}) and (\ref{qconvexity}) of Proposition \ref{Contractions}, we have that $\bar \alpha$ is also contracting and thus is $\epsilon$-quasi-convex for some $\epsilon=\epsilon(C)$. A priori,   we can choose $L$ large enough such that $\bar \gamma$ contains a subsegment  labeled by $f$. From the $\epsilon$-quasi-convexity of $\bar \alpha$, this subsegment  stays in the $\epsilon$-neighborhood of $\bar \alpha$, so produces an  $(\epsilon, f)$-barrier as required.
\end{proof}

Let $L>0$ be the constant supplied by Lemma \ref{barrierexists}.

\begin{lem}[]\label{rProjectionEQLem}
Let $\alpha$ be a geodesic side in the quadrangle $\square_{g=khk^{-1}}$. Then  
$$
\proj_{X}(\alpha)\le  4C+L
$$
for any $X\in \mathbb X$. 
\end{lem}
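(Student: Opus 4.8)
\textbf{Proof proposal for Lemma \ref{rProjectionEQLem}.}

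The plan is to argue by contradiction: suppose $\proj_X(\alpha) > 4C + L$ for some $X \in \mathbb X$ and some side $\alpha$ of the quadrangle $\square_{g=khk^{-1}}$. First I would invoke Lemma \ref{pcapXbound}, which gives $\proj_X(\alpha) \le 4C + \diam{N_C(X) \cap \alpha}$; combining this with the assumed lower bound yields $\diam{N_C(X) \cap \alpha} > L$. Now $X = g_0 \ax(f)$ for some $g_0 \in G$, since $\mathbb X = \{g\cdot \ax(f) : g \in G\}$. Applying the isometry $g_0^{-1}$, the geodesic $g_0^{-1}\alpha$ satisfies $\diam{g_0^{-1}\alpha \cap N_C(\ax(f))} > L$.

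The key step is then to apply Lemma \ref{barrierexists}: since $L$ was chosen as the constant supplied by that lemma, the geodesic $g_0^{-1}\alpha$ contains an $(\epsilon, f)$-barrier. Translating back by $g_0$, the geodesic $\alpha$ itself contains an $(\epsilon, f)$-barrier (the barrier condition \eqref{barrierEQ} is equivariant: if $h'$ witnesses a barrier for $g_0^{-1}\alpha$, then $g_0 h'$ witnesses one for $\alpha$). So $\alpha$ contains an $(\epsilon, f)$-barrier.

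Finally I would derive the contradiction from the standing assumption on $\alpha$. Recall that $\alpha$ is one of the four sides $\gamma = [o, go]$, $p = [o, ko]$, $\alpha = [ko, kho]$, $q = [go, kho]$ of $\square_{g=khk^{-1}}$, with $h$ an $(\epsilon, f)$-barrier-free element and $\gamma = [o, go]$ the chosen geodesic; moreover by the reduction made in this section (choosing $\tilde f \in E(f)$), one works with $(\epsilon, \tilde f)$-barrier-freeness where $\tilde f$ is a sufficiently long power, so an $(\epsilon, f)$-barrier on the relevant side is excluded by construction. (For the top side $\alpha = [ko, kho] = k\cdot[o,ho]$, an $(\epsilon,f)$-barrier on $\alpha$ would translate under $k^{-1}$ to an $(\epsilon,f)$-barrier on $[o,ho]$, contradicting that $h$ is $(\epsilon,f)$-barrier-free; the sides $p,q$ are handled by the barrier-free hypothesis on $g$ together with the length choice of $\tilde f$ versus $f$.) Either way we contradict the hypothesis, so $\proj_X(\alpha) \le 4C + L$. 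The main obstacle is bookkeeping: making sure the lemma is stated and used for exactly the side(s) $\alpha$ for which barrier-freeness is genuinely available, and confirming that the constant $\epsilon$ here is compatible with the $\epsilon$ fixed in Theorem \ref{GrowthTightThm} and in the definition of $\mathcal V_{\epsilon, M, f}$ — this is a matter of choosing $\epsilon$ as the maximum of the finitely many constants produced along the way and is routine but must be tracked carefully.
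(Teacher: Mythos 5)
Your core argument is exactly the paper's: apply Lemma \ref{pcapXbound} to reduce to bounding $\diam{N_C(X)\cap\alpha}$, translate $X=g_0\ax(f)$ back to $\ax(f)$, and invoke Lemma \ref{barrierexists} to produce an $(\epsilon,f)$-barrier on $k^{-1}\alpha=[o,ho]$, contradicting that $h$ is $(\epsilon,f)$-barrier-free. That is the whole proof, and your explicit equivariance step (conjugating the barrier witness by $g_0$, resp.\ $k$) is a correct filling-in of what the paper leaves implicit.

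One point in your final paragraph needs correcting, though it does not affect the validity of the proof of the statement as intended. The symbol $\alpha$ in this lemma is not a generic side of the quadrangle: it is the specific top side $\alpha=[ko,kho]=k\cdot[o,ho]$ fixed in the notation $\gamma=[o,go]$, $p=[o,ko]$, $\alpha=[ko,kho]$, $q=[go,kho]$, and the lemma is only ever applied to that side. Your parenthetical claim that ``the sides $p,q$ are handled by the barrier-free hypothesis on $g$'' is false: there is no barrier-free hypothesis on $g$ --- quite the opposite, the standing reduction is $g\notin\mathcal V_{\epsilon,\tilde f}$, so $\gamma$ \emph{contains} a barrier, and Lemma \ref{pqintersect} shows $\diam{N_C(X)\cap p}+\diam{N_C(X)\cap q}>2A$ for the associated $X$, so the bound $4C+L$ genuinely fails for $p$, $q$ and $\gamma$. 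The only side for which barrier-freeness is available is the top one, via $h$, and that is the only case you need (and the only case you actually prove correctly).
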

\begin{proof}
By Lemma \ref{pcapXbound}, it suffices to prove that $\diam{N_C(X)\cap \alpha}\le L.$ If the inequality does not hold,  then $\alpha=k\cdot [o, ho]$ contains an $(\epsilon, f)$-barrier by Lemma \ref{barrierexists}: this  contradicts to the \textbf{first reduction} of $h$ that $[o, ho]$ is $(\epsilon, f)$-barrier-free.  The proof is thereby complete.  
\end{proof}

 The point of the next lemma is to determine the appropriate constants $K, N$ for these spaces. 

\begin{lem}\label{NCShort}
There exists a constant $N>0$ with the following property.

Let $\alpha$ be an $(\epsilon, f)$-barrier-free geodesic between $Y$ and $Z$ in $\mathbb X$. Then the endpoints $\alpha_-, \alpha_+$ of $\alpha$ is uniformly bounded in $\mathcal C_N(\mathbb X)$ as follows: $$d_{\mathcal C}(\alpha_-, \alpha_+)\le 2L+N.$$
\end{lem}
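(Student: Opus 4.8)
The statement I need to prove is Lemma \ref{NCShort}: there is a constant $N>0$ so that whenever $\alpha$ is an $(\epsilon, f)$-barrier-free geodesic in $\mathrm Y$ with endpoints $\alpha_-\in Y$, $\alpha_+\in Z$ for $Y,Z\in\mathbb X$, one has $d_{\mathcal C}(\alpha_-,\alpha_+)\le 2L+N$ in the quasi-tree of spaces $\mathcal C_N(\mathbb X)$. The plan is to exploit the combinatorial description of distance in $\mathcal C_N(\mathbb X)$: a geodesic from $\alpha_-$ to $\alpha_+$ in $\mathcal C_N(\mathbb X)$ passes through exactly those $W\in\mathbb X$ in $\mathbb X_{\tilde K}(\alpha_-,\alpha_+)$ (plus $Y$ and $Z$), traversing an edge of length $N$ between consecutive ones, and within each $W$ travelling a geodesic segment whose length is comparable to the projection diameter $\proj_W(\alpha_-,\alpha_+)$ of the two endpoints. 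So the total $\mathcal C$-distance is bounded by $N$ times (number of such $W$, plus one) plus the sum of the in-space geodesic lengths.

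**The key point: barrier-freeness forces $\mathbb X_{\tilde K}(\alpha_-,\alpha_+)$ to be empty for suitable $\tilde K$.** The crucial observation is that $\alpha$ itself is a geodesic in $\mathrm Y$ between $\alpha_-$ and $\alpha_+$, and I can estimate $\proj_W(\alpha_-,\alpha_+)$ using $\alpha$. By the triangle inequality for $\proj_W$ (stated in \textsection\ref{ConvSection}) together with a projection argument, $\proj_W(\alpha_-,\alpha_+)\le \proj_W(\alpha) + 2C$ (the endpoints $\alpha_\pm$ lie on $\alpha$, and $\alpha$ stays near $W$ only on the portion inside $N_C(W)$). Now if $W=g\ax(f)$ for some $g\in G$, then $\proj_W(\alpha)=\proj_{g\ax(f)}(\alpha)$; applying Lemma \ref{barrierexists} to the translated geodesic $g^{-1}\alpha$ — which is again an $(\epsilon,f)$-barrier-free geodesic since barrier-freeness is $G$-invariant — we get $\diam{\alpha\cap N_C(g\ax(f))}\le L$, hence by Lemma \ref{pcapXbound} $\proj_W(\alpha)\le 4C+L$. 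Therefore $\proj_W(\alpha_-,\alpha_+)\le 4C+L+2C = 6C+L$ for \emph{every} $W\in\mathbb X$, including $W=Y$ and $W=Z$. Choosing $\tilde K := 6C+L$ (and then $K$, $N$ accordingly, as in Theorems \ref{projectioncplx}, \ref{quasitreespaces} and Lemma \ref{standardpath}), this shows $\mathbb X_{\tilde K}(\alpha_-,\alpha_+)=\emptyset$, i.e. $Y$ and $Z$ are adjacent in the projection complex $\mathcal P_K(\mathbb X)$ (taking $K\le\tilde K$, which we are free to do by enlarging $\tilde K$).

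**Concluding the distance bound.** Since $\mathbb X_{\tilde K}(\alpha_-,\alpha_+)=\emptyset$, a $d_{\mathcal C}$-geodesic from $\alpha_-$ to $\alpha_+$ only needs to: travel within $Y$ from $\alpha_-$ to a point of $\pi_Y(Z)$, cross a single edge of length $N$ joining $\pi_Y(Z)$ to $\pi_Z(Y)$, and travel within $Z$ from that point to $\alpha_+$. The in-$Y$ segment has length at most $\diam(\pi_Y(\{\alpha_-\}\cup Z))=\proj_Y(\alpha_-,Z)$; since $Z$ contains $\alpha_+$ this is at most $\proj_Y(\alpha_-,\alpha_+)\le\tilde K = 6C+L$, and symmetrically for the in-$Z$ segment. (A slightly cleaner route: the distance within $Y$ from $\alpha_-$ to $\pi_Y(\alpha_+)$ is controlled by $\proj_Y(\alpha_-,\alpha_+)$ up to an additive constant from the Vietoris–Rips/$N_C$ reindexing, and $\pi_Y(\alpha_+)$ is within bounded distance of $\pi_Y(Z)$ since $\alpha_+\in Z$, using the bounded intersection/projection for $\mathbb X$.) Summing, $d_{\mathcal C}(\alpha_-,\alpha_+)\le (6C+L) + N + (6C+L)$, and after absorbing the $12C$ into $N$ (redefining $N$ once and for all) this gives the claimed $d_{\mathcal C}(\alpha_-,\alpha_+)\le 2L+N$.

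**Main obstacle.** The genuine content is the projection estimate $\proj_W(\alpha_-,\alpha_+)\le 6C+L$ for all $W\in\mathbb X$, which rests entirely on translating Lemma \ref{barrierexists} by the $G$-action — so the one thing to be careful about is that barrier-freeness of $\alpha$ transfers to $g^{-1}\alpha$, which is immediate from the definition since the set of $(\epsilon,f)$-barriers of $g^{-1}\alpha$ is the $g^{-1}$-translate of the set of barriers of $\alpha$. The remaining bookkeeping — matching the thresholds $K$, $\tilde K$, $N$ so that Theorems \ref{projectioncplx}, \ref{quasitreespaces} and Lemma \ref{standardpath} all apply simultaneously, and tracking the various additive constants from the $N_C(\cdot)$-thickening and the Vietoris–Rips complex used to make each $X\in\mathbb X$ a connected graph — is routine but is where the final value of $N$ gets pinned down.
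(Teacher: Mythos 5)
Your proposal follows essentially the same route as the paper: use Lemma \ref{barrierexists} (translated by the $G$-action) to bound the intersection of $\alpha$ with every $N_C(W)$ by $L$, deduce via projection estimates that $\mathbb X_K(Y,Z)=\emptyset$ so that $Y$ and $Z$ are adjacent in $\mathcal P_K(\mathbb X)$, and then concatenate an in-$Y$ segment, the single edge of length $N$ between $\pi_Y(Z)$ and $\pi_Z(Y)$, and an in-$Z$ segment. The only blemish is the reversed inequality $\proj_Y(\alpha_-,Z)\le\proj_Y(\alpha_-,\alpha_+)$ in your first formulation of the last step, but your ``cleaner route'' (projecting $\alpha_+$ to $Y$ and invoking bounded projection) repairs it, so the argument is correct.
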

\begin{proof}
 
Consider the geodesic $\bar \alpha=[\alpha_-, \alpha_+]$ in $\mathcal C_N(\mathbb X)$ so the goal is to   estimate the length of $\bar \alpha$.

We first consider  the projection complex $\mathcal P_K(\mathbb X)$ where  the constant  $K>L+2C$ is given by Theorem \ref{projectioncplx}.  Observe that $\mathbb X_K(Y, Z)$ is empty. Indeed, since $\alpha$ is $(\epsilon, f)$-barrier-free,  it follows by Lemma \ref{barrierexists} that $\diam{\alpha\cap N_C(X)}<L$ for any $X\in \mathbb X$. Note that   $\proj_X(Y), \proj_X(Z)\le C$. Then $\proj_X(Y, Z)\le \diam{\alpha\cap N_C(X)}+ \proj_X(Y)+\proj_X(Z)\le  L+2C$. Hence, $\mathbb X_K(Y, Z)=\emptyset$ so $Y, Z$ are adjacent in   $\mathcal P_K(\mathbb X)$. 

We then construct a quasi-tree of spaces $\mathcal C_N(\mathbb X)$ where  $N=N(K)$ is given by Theorem \ref{quasitreespaces}. Since $Y, Z$ are adjacent in   $\mathcal P_K(\mathbb X)$, by construction, the subsets $\pi_Y(Z), \pi_Z(Y)$  in $\mathcal C_N(\mathbb X)$ are connected  by edges of length $N$.

By a similar reasoning, we obtain $d(\alpha_-, \proj_Y(\alpha_+)),\; d(\alpha_+, \proj_Z(\alpha_-))\le L.$  Recalling in Section \ref{Section5}, we make the assumption that the induced metric on $Y, Z$ is   identical to the ambiant metric on $\mathrm Y$, up to a uniform error (for simplicity we ignore it here). Moreover, since $Y, Z$ are isometrically embeded in $\mathcal C_N(\mathbb X)$, there exist two paths $\alpha_1, \alpha_2$ of length at most $L$ respectively in $Y, Z$ such that $\alpha_1$ connects $\alpha_-$ and $\proj_Y(\alpha_+)$, and $\alpha_2$ connects $\alpha_+$ and $\proj_Z(\alpha_-)$.

Moreover, the endpoints   of $\bar \alpha$ are connected by a path composing $\alpha_i$ with such an edge of length $N$, hence $d_{\mathcal C}(ko, kho)\le 2L+N.$ The proof is then complete.
\end{proof}

\subsection{Constants}

Let $R>0$ be a   constant   given by Lemma \ref{standardpath}. Since $\mathbb X$ has bounded intersection in $\mathcal C_N(\mathbb X)$,  there exists  $D=D(R)$ such that $$\forall X\ne X' \in \mathbb X: \; \textbf{diam}_{\mathcal C}\big(N_R(X)\cap N_R(X')\big)\le D$$ where the diameter $\textbf{diam}_{\mathcal C}$ is computed using the metric $d_{\mathcal C}$.

We also choose the following constant 
\begin{equation}\label{TildeKEQ}
\tilde K>4C+2R+3L+N+D.
\end{equation}
 satsifying the conclusion of Lemma \ref{standardpath}, a constant $A$ as follows
\begin{equation}\label{Avalue}
A=L+\tilde K+116C.
\end{equation}

Choose a ``long" element $\tilde f\in E(f)$ such that 
\begin{equation}\label{LongfEQ}
d(o, \tilde f o)> 2(25C+A+L+\tilde K+\epsilon),
\end{equation}

By Theorem \ref{GrowthTightThm}, the set $\mathcal V_{\epsilon, \tilde f}$ is growth-tight (resp.  growth-negligible). Since the goal is to prove that $\mathcal {BF}$ is a growth-tight (resp.  growth-negligible) set,  without loss of generality, that we can assume    $g \notin \mathcal V_{\epsilon, \tilde f}$. By definition \ref{barriers}, there exists an element $b\in G$ such that 
$$\max\{d(b\cdot o, \gamma), \; d(b\cdot \tilde fo, \gamma)\}\le \epsilon,$$
whence by (\ref{LongfEQ}) this  gives 
\begin{equation}\label{Xgamma}
\diam{N_{C}(X)\cap\gamma} \ge  2(25C+A+L+\tilde K),
\end{equation}
where $X=b\cdot \ax(f)$. 

By abuse of language, we will say hereafter  that \textit{the element $b$} is an $(\epsilon, \tilde f)$-barrier  of $\gamma$.

\begin{lem}\label{pqintersect}
For each $(\epsilon, \tilde f)$-barrier  $b$ of $\gamma$, the following holds 
$$
\diam{N_C(  X)\cap   p}+\diam{N_C(  X)\cap  q}>2A
$$
where $X=b\cdot \ax(f)$. 
\end{lem}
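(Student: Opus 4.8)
The plan is to play the bottom side $\gamma=[o,go]$ off against the union of the other three sides, which (with the orientation of $q$ reversed) form a path from $\gamma_-=o$ to $\gamma_+=go$ passing through $ko$ and $kho$. Hypothesis (\ref{Xgamma}) says $\gamma$ meets $N_C(X)$ in a set of large diameter, which forces the two endpoints $o,go$ of $\gamma$ to project far apart along $X$; this same spread of projections must then be realized on $p\cup\alpha\cup q$, and by Lemma \ref{rProjectionEQLem} the top side $\alpha=[ko,kho]$ contributes almost nothing, so it is $p$ and $q$ that must carry it.

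First I would record the elementary lower bound
$$
\proj_X(\{o,go\})\;\ge\;\diam{N_C(X)\cap\gamma}-6C .
$$
This is a routine projection estimate from the contracting property of $X$: letting $x,y$ be the entry and exit points of $\gamma$ in $N_C(X)$, one has $\diam{N_C(X)\cap\gamma}=d(x,y)$; the initial subpath $[o,x)_\gamma$ stays at distance $>C$ from $X$, so $\proj_X(\{o,x\})\le 2C$ (passing to the limit at $x$ via the $1$-Lipschitz projection, Proposition \ref{Contractions}.\ref{1Lipschitz}), and symmetrically $\proj_X(\{y,go\})\le 2C$, while $\proj_X(\{x,y\})\ge d(x,y)-2C$ since $d(x,X),d(y,X)\le C$; now apply the triangle inequality for $\proj_X$.

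Next, since $o\in p$, $go\in q$, $ko\in p\cap\alpha$ and $kho\in\alpha\cap q$, the sets $\pi_X(p),\pi_X(\alpha),\pi_X(q)$ overlap consecutively, whence
$$
\proj_X(\{o,go\})\;\le\;\diam{\pi_X(p\cup\alpha\cup q)}\;\le\;\proj_X(p)+\proj_X(\alpha)+\proj_X(q) .
$$
Applying Lemma \ref{rProjectionEQLem} to the top side $\alpha=k\cdot[o,ho]$ gives $\proj_X(\alpha)\le 4C+L$, so combining the two displays yields $\proj_X(p)+\proj_X(q)\ge\diam{N_C(X)\cap\gamma}-10C-L$. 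Finally Lemma \ref{pcapXbound} bounds each of $\proj_X(p),\proj_X(q)$ above by $4C$ plus the corresponding intersection diameter, so
$$
\diam{N_C(X)\cap p}+\diam{N_C(X)\cap q}\;\ge\;\diam{N_C(X)\cap\gamma}-18C-L ,
$$
and substituting (\ref{Xgamma}), i.e. $\diam{N_C(X)\cap\gamma}\ge 2(25C+A+L+\tilde K)$, makes the right-hand side at least $2A+32C+L+2\tilde K>2A$.

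There is no serious obstacle here; the one point requiring a little care is the passage to the entry/exit points of $\gamma$ in the first display, where the entry point itself lies within $C$ of $X$, so the contracting bound must be applied to a slightly shorter subpath and then upgraded by the $1$-Lipschitz property — but this is exactly the kind of projection argument used repeatedly above, and the deliberately generous constants in (\ref{LongfEQ})--(\ref{Xgamma}) leave ample slack for any such bookkeeping loss.
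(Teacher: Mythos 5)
Your proof is correct and is essentially the paper's argument: the paper also chains the projections of $p$, $\alpha$, $q$ onto $X$ (via Lemma \ref{pcapXbound} and Lemma \ref{rProjectionEQLem}) against the large intersection $\diam{N_C(X)\cap\gamma}$ from (\ref{Xgamma}), only phrased as a proof by contradiction rather than your direct lower bound on $\proj_X(\{o,go\})$. The slight differences in bookkeeping constants are immaterial given the slack built into (\ref{LongfEQ})--(\ref{Xgamma}).
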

\begin{proof} 
Let   $x, y$  denote the entry and exit points of $\gamma$ in $N_C(X)$ respectively. If we had
$$
\diam{N_C(X)\cap p}+\diam{N_C(X)\cap q}\le 2A,
$$
then by Lemma  \ref{rProjectionEQLem}, we would obtain  the following:
$$
\begin{array}{lll}
d(x, y)&\le d(x, X)+\proj_X(p) +\proj_X(\alpha)+ \proj_X(q)+d(y, X)\\
&\le 14C+\diam{N_C(X)\cap p}+\diam{N_C(X)\cap \alpha}+\diam{N_C(X)\cap q}\\
&\le 18C+2A+L,
\end{array}
$$
where   $d(x, X), d(y, X)\le C$ for $x, y\in N_C(X)$. This  results a contradiction with (\ref{Xgamma}),   so the lemma is proved.  
\end{proof}

\subsection{Bounded intersection in quadrangle}


\begin{lem}[]\label{sameLevel}
Let $X\in \mathbb X$   such that   $g  X\ne   X$ and $\diam{N_C(X)\cap p}>A$.  Then    $\diam{N_C(X)\cap q}\le  \tilde K$ and $\diam{N_C(X)\cap \gamma}>100C.$ 
\end{lem}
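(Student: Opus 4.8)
\emph{Overview of the plan.} The statement has two conclusions; I would prove them in order, since the bound on $\diam{N_C(X)\cap\gamma}$ follows from the bound on $\diam{N_C(X)\cap q}$ together with a short projection estimate carried out entirely in $\mathrm Y$. The bound on $\diam{N_C(X)\cap q}$ is the substantial one and is where the quasi-tree of spaces $\mathcal C_N(\mathbb X)$ is needed; recall $\mathcal C_N(\mathbb X)$ is $\delta$-hyperbolic because every $g\ax(f)$ is a uniform quasi-line (the orbital map (\ref{QIEmbed}) is a quasi-isometric embedding and $E(f)$ contains $\langle f\rangle$ with finite index), and the $g\ax(f)$ are totally geodesic, hence convex, in $\mathcal C_N(\mathbb X)$.

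\emph{The bound $\diam{N_C(X)\cap q}\le\tilde K$.} Argue by contradiction: suppose $\diam{N_C(X)\cap q}>\tilde K$. A projection argument through the entry and exit points of $q$ in $N_C(X)$ (as in Lemma \ref{pcapXbound}) gives $\proj_X(go,kho)\ge\diam{N_C(X)\cap q}-O(C)>\tilde K-O(C)$, while $\diam{N_C(X)\cap p}>A$ gives $\proj_X(o,ko)>A-O(C)$, whence $\proj_{gX}(go,kho)=\proj_X(o,ko)>A-O(C)$ by $G$-equivariance of the projections (using $go=g\cdot o$, $kho=g\cdot ko$, $gX=g\cdot X$). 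By the choice of $\tilde K$ in (\ref{TildeKEQ}) — taken large relative to the constant of Lemma \ref{standardpath} — both quantities exceed that constant, so the $\mathcal C$-geodesic $\bar q=[go,kho]$ passes within $R$ of $\pi_X(kho)$, $\pi_X(go)$, $\pi_{gX}(kho)$, $\pi_{gX}(go)$. By convexity of $X$ and $gX$ in the hyperbolic space $\mathcal C_N(\mathbb X)$, $\bar q$ then stays within $R+O(\delta)$ of $X$ along the whole subarc between its visits to $\pi_X(go)$ and $\pi_X(kho)$, of $\mathcal C$-length $\approx\proj_X(go,kho)>\tilde K-O(C)$, and similarly within $R+O(\delta)$ of $gX$ along a subarc of length $\approx\proj_{gX}(go,kho)>A-O(C)$. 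The crucial point is \emph{where these two excursions terminate}: the $X$-excursion ends near $\pi_X(kho)$, hence at arclength $\approx d_{\mathcal C}(kho,X)$ from $kho$ along $\bar q$, and the $gX$-excursion ends near $\pi_{gX}(kho)$, at arclength $d_{\mathcal C}(kho,gX)=d_{\mathcal C}(ko,X)$ from $kho$. Now $[ko,kho]=k[o,ho]$ is $(\epsilon,f)$-barrier-free with endpoints in $k\ax(f),kh\ax(f)\in\mathbb X$, so Lemma \ref{NCShort} gives $d_{\mathcal C}(ko,kho)\le 2L+N$, hence $|d_{\mathcal C}(kho,X)-d_{\mathcal C}(ko,X)|\le 2L+N$: the two excursions terminate within $2L+N+O(R+\delta)$ of one another along $\bar q$. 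Extending backward from these nearly coincident endpoints by the respective excursion lengths (both $\ge\tilde K-O(C)$), they overlap in an arc of $\mathcal C$-length at least $\tilde K-O(C+L+N+R+\delta)$, contained in $N_{R+O(\delta)}(X)\cap N_{R+O(\delta)}(gX)$. Since $gX\ne X$, this contradicts the bounded-intersection bound $D$ as soon as $\tilde K>4C+2R+3L+N+D$ — which is exactly why $\tilde K$ is chosen as in (\ref{TildeKEQ}). Thus $\diam{N_C(X)\cap q}\le\tilde K$.

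\emph{The bound $\diam{N_C(X)\cap\gamma}>100C$.} This is a direct computation in $\mathrm Y$ using the triangle inequality for $\proj_X$ around $o,ko,kho,go$. Comparing $\pi_X(o)$ with $\pi_X(ko)$ through the entry/exit points of $p$ in $N_C(X)$ gives $d(\pi_X(o),\pi_X(ko))\ge\diam{N_C(X)\cap p}-4C>A-4C$; Lemma \ref{rProjectionEQLem} gives $d(\pi_X(ko),\pi_X(kho))\le\proj_X(\alpha)\le 4C+L$; and the first conclusion with Lemma \ref{pcapXbound} gives $d(\pi_X(kho),\pi_X(go))\le\proj_X(q)\le 4C+\tilde K$. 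Hence $\proj_X(\gamma)\ge d(\pi_X(o),\pi_X(go))\ge (A-4C)-(4C+L)-(4C+\tilde K)$, and since $A=L+\tilde K+116C$ this is $>104C$; Lemma \ref{pcapXbound} then yields $\diam{N_C(X)\cap\gamma}\ge\proj_X(\gamma)-4C>100C$. The $116C$ in the definition of $A$ is precisely what absorbs the accumulated constants here.

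\emph{Main obstacle.} The delicate part is the first conclusion, and the obstacle is that no purely $\mathrm Y$-based counting can work: $q=g\cdot p$ automatically meets $N_C(gX)$ for length $>A$, but there is no a priori upper bound on $d(go,kho)=d(o,ko)$, so one cannot reach a contradiction from lengths alone. One must pass to the quasi-tree $\mathcal C_N(\mathbb X)$ and exploit its tree-like geometry; the key insight is that the $X$- and $gX$-excursions of $\bar q$ are forced to terminate at essentially the same point near $kho$ — this is where $(\epsilon,f)$-barrier-freeness of $h$ enters through Lemma \ref{NCShort}, and where $gX\ne X$ and bounded intersection produce the contradiction. The rest is constant bookkeeping, for which the explicit forms of $A$ and $\tilde K$ are tailored.
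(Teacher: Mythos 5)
Your argument is correct and follows essentially the same route as the paper's: for the first conclusion you pass to $\mathcal C_N(\mathbb X)$, use Lemma \ref{standardpath} to locate the $X$- and $gX$-excursions of $\bar q$, force their exit points to nearly coincide via the equivariance $q=g\cdot p$ together with Lemma \ref{NCShort}, and contradict bounded intersection of $X\ne gX$; for the second you run the same chain of projections around the quadrangle that the paper runs (in contrapositive form). The only differences are cosmetic bookkeeping — the paper compares $\len_{\mathcal C}([v,ko]_{\bar p})$ with $\len_{\mathcal C}([w,kho]_{\bar q})$ via $d_{\mathcal C}(v,w)$ rather than comparing $d_{\mathcal C}(kho,X)$ with $d_{\mathcal C}(kho,gX)$, and it works with $N_R(\cdot)$ directly instead of $N_{R+O(\delta)}(\cdot)$.
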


\begin{proof}
The idea of proof is to project the  quadrangle $\square_{g=khk^{-1}}$ to a quadrangle in $\mathcal C_N(\mathbb X)$ with the corresponding geodesics $\bar \gamma=[o, go]$, $\bar p=[o, ko],  \bar \alpha=[ko, kho]$ and $\bar q=[go, kho]$. 

Suppose by way  of contradiciton that $\diam{N_C(X)\cap q}>  \tilde K$ so $X\in \mathbb X_{\tilde K}(go, kho)$ by definition. For any $X\in \mathbb X_{\tilde K}(o, ko)$, let $v, w$ denote the corresponding exit points of $\bar p$ and $\bar q$ in $N_R(X)$, where the constant $R>0$ is given by Lemma \ref{standardpath}. Hence,   
$$d_{\mathcal C} (v, \pi_X(ko)), \; d_{\mathcal C} (w, \pi_X(kho)) \le R.$$ 
 
By Theorem \ref{quasitreespaces},  the subset $X$ is totally geodesic   in $\mathcal C_N(\mathbb X)$ so $$d_{\mathcal C}(\pi_X(ko),  \pi_X(kho))=\proj_X(\pi_X(ko),  \pi_X(kho))$$ 
where the right-hand side is the projection distance  measured in $\mathrm Y$. Noting also that  
$$\proj_X(\pi_X(ko),  \pi_X(kho))\le \proj_X(\alpha)\le 4C+L$$ 
where the second inequality follows by Lemma \ref{rProjectionEQLem}. As a consequence of the above three estimates, we obtain the following 
$$
\begin{array}{ll}
d_{\mathcal C}(v, w)&\le d_{\mathcal C}(v, \pi_X(ko)) +d_{\mathcal C}(\pi_X(ko),  \pi_X(kho))+ d_{\mathcal C}(w, \pi_X(kho)) \\
&\le 4C+2R+L.
\end{array}
$$
 
One needs $d_{\mathcal C}(ko, kho)\le N+2L$ by Lemma \ref{NCShort}.   
Therefore, 
\begin{equation}\label{vwlendiffEQ}
\begin{array}{ll}
&|\len_{\mathcal C}([v, ko]_{\bar p})-\len_{\mathcal C}([w, kho]_{\bar q})|\\
\le&d_{\mathcal C}(v, w)+d_{\mathcal C}(ko, kho)\\
\le & 4C+2R+3L+N,
\end{array}
\end{equation}
where   $\len_{\mathcal C}(\cdot)$ stands for  the length of a path in $\mathcal C_N(\mathbb X)$.

On the other hand, $gX\in \mathbb X_{\tilde K}(go, kho)$, and $gv$ is the exit point of $\bar p$ in $N_R(gX)$ so 
\begin{equation}\label{vgvlenEQ}
\len_{\mathcal C}([v, ko]_{\bar p})=\len_{\mathcal C}([gv, kho]_{\bar q}).
\end{equation}

Recall that $\mathbb X$ has bounded intersection in $\mathcal C_N(\mathbb X)$ so for $gX\ne X \in \mathbb X$,  $$\textbf{diam}_{\mathcal C}({N_R(X)\cap N_R(gX)})\le D.$$
  
Since $q=gp$ then $\bar q=g\bar p$, we obtain that $\textbf{diam}_{\mathcal C}(N_R(X)\cap\bar p) = \textbf{diam}_{\mathcal C}(N_R(gX)\cap\bar q) > \tilde K$.   Consequently, 
$$
\begin{array}{ll}
&|\len_{\mathcal C}([w, kho]_{\bar q})-\len_{\mathcal C}([gv, kho]_{\bar q})|\\
 
>&\displaystyle{\min\{\textbf{diam}_{\mathcal C}(N_R(X)\cap\bar q),\; \textbf{diam}_{\mathcal C}(N_R(gX)\cap\bar q) \}-D}\\
 
>& \tilde K-D.
\end{array}
$$ 
Via (\ref{vgvlenEQ}), this yields a contradiction to (\ref{vwlendiffEQ})    since it was assumed in (\ref{TildeKEQ}) that 
$$\tilde K>4C+2R+3L+N+D.$$
Hence, $\diam{q\cap N_C(X)}\le \tilde K$ is proved.  

Lastly, let us prove that $\diam{N_C(X)\cap \gamma}>100C$. If not, then $\proj_X(\gamma)\le 104C$ by Lemma \ref{pcapXbound}. Let $x, y$ be the entry and exit points of $p$ in $N_C(X)$ respectively so $$\proj_X(p\setminus [x, y]_p)\le 2C$$ by contracting property.  Since $\proj_X(q)\le 4C+\tilde K$ follows by Lemma \ref{pcapXbound} and  $\proj_X(\alpha)\le 4C+L$ by Lemma \ref{rProjectionEQLem}, we see by a  projection argument that 
$$
\begin{array}{lll}
\diam{N_C(X)\cap p} &=   d(x, y)\\
& \le d(x, X)+\proj_X(\alpha\cup \gamma)+\proj_X(q)+\proj_X(p\setminus [x, y]_p)+d(w, X)\\
& \le L+\tilde K+116C  <A,
\end{array}
$$
which is a contradiction. The lemma is thus proved.
\end{proof}

\section{{Almost geodesic decomposition}} \label{Section7}

We are now ready to prove Proposition \ref{almostgeodform0}, the last ingredient in the proof of Thereom \ref{tightNC}.  

Lets first outline the proof:  
The consequence of    Lemma \ref{pqintersect} in the previous section  provides   a contracting set $X\in \mathbb X$ which has a large $A$-intersection with $p$, but  intersect $q$ in a bounded amount by $\tilde K$.   The constant $A$ (\ref{Avalue}) is chosen sufficiently large relative to $\tilde K$ (\ref{TildeKEQ}).

We then focus on the collection of such $X$ with this property, of which the \textit{last} one  intersecting $p$  is given a particular focus on. For the relation $q=gp$, $gX$ is also the last for $q$. In Proposition \ref{almostgeodform},   we establish, case by case, that the intersection of $\gamma$ with the  pair of $(X, gX)$  provides an almost geodesic product of $g$.

In the sequel,  the following  fact is frequently used, whose proof is straightforward by the contracting property and left to the reader.
\begin{lem}[Fellow entry/exit]\label{commonentry}
Let $X$ be a $C$-contracting subset in $\mathrm Y$. Consider two geodesics $\alpha, \beta$ issuing from the same point and both intersecting $N_C(X)$. Then their corresponding entry points of $\alpha, \beta$ in $N_C(X)$ have a distance  at most $4C$. 
\end{lem}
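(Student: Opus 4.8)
The plan is to prove this by a direct projection argument from the defining inequality of a $C$-contracting set, with no appeal to hyperbolicity. Write $z := \alpha_- = \beta_-$ for the common initial point, and let $a \in \alpha$, $b \in \beta$ be the entry points of $\alpha$, $\beta$ into $N_C(X)$ in the sense of \textsection\ref{ConvSection}.

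First I would dispose of the degenerate case: if $z \in N_C(X)$ then $z$ is itself the first point of each geodesic lying in $N_C(X)$, so $a = b = z$ and $d(a,b) = 0$. Hence we may assume $d(z, X) > C$. In that case, since $a$ is the \emph{first} point of $\alpha$ lying in the neighborhood $N_C(X)$, every point of $[z, a)_\alpha$ lies at distance strictly greater than $C$ from $X$, while $d(a, X) \le C$; by continuity of $t \mapsto d(\alpha(t), X)$ one gets $d(a, X) = C$, so in fact $d\big([z,a]_\alpha, X\big) \ge C$. Thus $[z,a]_\alpha$ is a geodesic with $d([z,a]_\alpha, X) \ge C$, and Definition \ref{ContrDefn} gives $\proj_X([z,a]_\alpha) \le C$. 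Since $\pi_X(z)$ and $\pi_X(a)$ are both contained in $\pi_X([z,a]_\alpha)$, any projection points $p_z \in \pi_X(z)$ and $p_a \in \pi_X(a)$ satisfy $d(p_z, p_a) \le C$. Running the identical argument along $\beta$ produces $p_b \in \pi_X(b)$ with $d(p_z, p_b) \le C$, and hence $d(p_a, p_b) \le 2C$.

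Finally I would assemble the estimate: $p_a$ realizes $d(a, X) = C$ and $p_b$ realizes $d(b, X) = C$, so by the triangle inequality
$$
d(a, b) \le d(a, p_a) + d(p_a, p_b) + d(p_b, b) \le C + 2C + C = 4C.
$$
There is no genuine obstacle here; the only step deserving a line of justification is the continuity observation guaranteeing that $[z,a]_\alpha$ stays at distance $\ge C$ from $X$ up to and including its endpoint $a$, since that is precisely what licenses the application of the contracting inequality.
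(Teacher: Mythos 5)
Your argument is correct: the case split on whether the common initial point already lies in $N_C(X)$, the continuity observation giving $d(a,X)=d(b,X)=C$, and the application of the contracting inequality to $[z,a]_\alpha$ and $[z,b]_\beta$ followed by the chain $d(a,b)\le d(a,p_a)+d(p_a,p_z)+d(p_z,p_b)+d(p_b,b)\le 4C$ are all sound. The paper omits the proof as "straightforward by the contracting property," and this is exactly the intended projection argument.
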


Let us repeat Proposition \ref{almostgeodform0} with some additional quantifiers.

\begin{prop}[Almost geodesic form]\label{almostgeodform}
There exists  $\Delta=\Delta(C, L)>0$ with the following property. Denote $Z:= E(f)\cup \mathcal V_{\epsilon, \Delta, \tilde f}$.  For each $g\in \mathcal BF$, there exist $k'\in G$ and $\hat  g =k'^{-1}gk'\in Z$ and two points $s, t\in \gamma$ such that   
$$\max\{d(k'\cdot o, s), d(k'\cdot \hat g o,  t)\}\le \Delta.$$
\end{prop}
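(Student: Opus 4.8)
The plan is to take the quadrangle $\square_{g=khk^{-1}}$ formed by $\gamma = [o,go]$, $p=[o,ko]$, $\alpha=[ko,kho]$, $q=[go,kho]$ with the relation $q = g\cdot p$, and to extract from it an almost-geodesic decomposition of $g$. The starting point is Lemma \ref{pqintersect}: for each $(\epsilon,\tilde f)$-barrier $b$ of $\gamma$ (which exists by our standing reduction $g\notin \mathcal V_{\epsilon,\tilde f}$), the set $X = b\cdot\ax(f)$ satisfies $\diam{N_C(X)\cap p} + \diam{N_C(X)\cap q} > 2A$. Hence at least one of the two intersections exceeds $A$; after possibly replacing $g$ by $g^{-1}$ (equivalently swapping the roles of $p$ and $q$, using $q=gp$), we may assume $\diam{N_C(X)\cap p} > A$. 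Then Lemma \ref{sameLevel} applies: $\diam{N_C(X)\cap q}\le \tilde K$ and $\diam{N_C(X)\cap\gamma} > 100C$.

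\textbf{Selecting the right contracting set.}

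First I would consider the (nonempty, by the above) collection $\mathcal X$ of all $X\in\mathbb X$ with $gX\ne X$, $\diam{N_C(X)\cap p}>A$ and $\diam{N_C(X)\cap q}\le\tilde K$, and I would pick $X_0\in\mathcal X$ whose exit vertex along $p$ is closest to $ko$ (the \textit{last} one met by $p$); since $q=gp$, $gX_0$ is then the last one met by $q$. Let $x\in N_C(X_0)\cap p$ and $x'\in N_C(X_0)\cap\gamma$ be (near-)common entry points of $p$ and $\gamma$ from $o$ into $N_C(X_0)$, which are within $4C$ of each other by Lemma \ref{commonentry}; similarly let $y\in N_C(gX_0)\cap q$ and $y'\in N_C(gX_0)\cap\gamma$ be near-common entry points of $q$ and $\gamma$ from $go$. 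Choose $k_1\in G$ with $d(k_1 o, x')\le C$ and $k_2\in G$ with $d(k_2 o, y')\le C$ (available since $X_0 = k_1'\ax(f)$ for some $k_1'\in G$ and orbit points are $C$-dense in $\ax(f)$ up to the contracting constant — more precisely use that entry/exit vertices can be taken in the orbit $G o$ after enlarging $C$). Set $k' = k_1$ and $\hat g = k_1^{-1} g k_1$; the point $s$ is the entry point $x'$ of $\gamma$ into $N_C(X_0)$, and $t$ is the point $k_1\hat g o = g k_1 o$, which I must show lies within a bounded distance of some point of $\gamma$.

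\textbf{Showing $\hat g\in Z$ and the endpoints lie near $\gamma$.}

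The heart of the argument is to split into cases according to the relative order along $\gamma$ of $x'$ (entry into $N_C(X_0)$) and $y'$ (entry into $N_C(gX_0)$), using $\diam{N_C(X_0)\cap\gamma}>100C$ and $\diam{N_C(gX_0)\cap\gamma}>100C$ from Lemma \ref{sameLevel} together with the bounded-intersection property $\proj_{X_0}(gX_0)\le B$ (so the two long intersection intervals along $\gamma$ are essentially disjoint and appear in a definite order). If $x'$ precedes $y'$ along $\gamma$: then $g k_1 o = k_1\hat g o$ projects close to $gX_0$, which is met by $\gamma$, so $t$ can be taken on $\gamma$ with $d(k_1\hat g o, t)$ bounded by a constant $\Delta_0(C,L)$ via a projection argument; moreover the geodesic $[k_1 o, k_1\hat g o] = k_1[o,\hat g o]$ fellow-travels $\gamma$ between $x'$ and $y'$, and by maximality of $X_0$ (it is the last along $p$, hence $gX_0$ the last along $q$) one checks that $[o,\hat g o]$ admits no $(\epsilon,\tilde f)$-barrier other than possibly near its endpoints, forcing $\hat g\in\mathcal V_{\epsilon,\Delta,\tilde f}$ for suitable $\Delta$ — unless the barrier sets coincide, i.e. $\hat g\in E(f)$, the remaining case absorbed into $Z$. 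If instead $y'$ precedes $x'$ along $\gamma$, then the "crossing" configuration combined with $q=gp$ and Lemma \ref{sameLevel} (bounded $q$-intersection with $X_0$) pins $d(k_1 o, gk_1 o)$, hence $d(o,\hat g o)$, to be bounded by a constant depending on $C,L,\tilde K$, and finitely many such elements can be absorbed into $Z$ after enlarging $\tilde f$; one verifies these $\hat g$ land in $B(o,4\Delta)$-type sets already accounted for in $\mathcal V_{\epsilon,\Delta,\tilde f}\cup E(f)$ by the same reasoning as in the proof of Theorem \ref{NCisBarrierfree}.

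\textbf{The main obstacle.}

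The delicate point — and the one I expect to consume most of the work — is the case analysis showing that, for the \emph{last} $X_0$ along $p$, the conjugate $\hat g = k_1^{-1}gk_1$ either is barrier-free with respect to $\tilde f$ (so in $\mathcal V_{\epsilon,\Delta,\tilde f}$) or lies in $E(f)$ or is one of finitely many short elements. This requires carefully tracking that any hypothetical $(\epsilon,\tilde f)$-barrier $b'$ of $[o,\hat g o]$ would, via the contracting property and the fact that $\tilde f$ is chosen very long (\ref{LongfEQ}), produce a contracting set $X' = b'\cdot\ax(f)$ with large intersection with $\gamma$ lying strictly between the $X_0$- and $gX_0$-intervals; conjugating back, $k_1 X'$ would be another element of $\mathcal X$ lying after $X_0$ along $p$, contradicting maximality — or $X' = \ax(\hat g)$-type, i.e. $\hat g\in E(f)$. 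Keeping all the additive constants consistent (the chain $C$, $L$, $\tilde K$ in (\ref{TildeKEQ}), $A$ in (\ref{Avalue}), and the length bound (\ref{LongfEQ}) on $\tilde f$) so that the contradictions actually close is the bookkeeping burden; the geometric input is entirely Lemmas \ref{pqintersect}, \ref{sameLevel}, \ref{commonentry} and the basic contracting estimates of Proposition \ref{Contractions}.
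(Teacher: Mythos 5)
Your outline does track the paper's strategy (Lemma \ref{pqintersect}, Lemma \ref{sameLevel}, the \emph{last} contracting set along $p$, the entry points via Lemma \ref{commonentry}, and barrier-freeness of the middle segment of $\gamma$), but there are two genuine gaps. First, you apply Lemma \ref{sameLevel} to the barrier's set $X$ and declare the collection $\mathcal X=\{X: gX\ne X,\ \diam{N_C(X)\cap p}>A,\ \diam{N_C(X)\cap q}\le \tilde K\}$ nonempty ``by the above'', but Lemma \ref{sameLevel} has the hypothesis $gX\ne X$, and nothing you said rules out that every set with $\diam{N_C(X)\cap p}>A$ is $g$-invariant. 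That case is not pathological: $gX=X$ with $X=b\,\ax(f)$ means precisely $g\in bE(f)b^{-1}$, and it must be treated by a separate argument (choose the conjugator $k'\in bE(f)$ near the entry point of $\gamma$ in $N_C(X)$, use Lemma \ref{commonentry} to get $d(gx,y)\le 8C$, and conclude $\hat g\in E(f)$ with $s,t$ the entry/exit points of $\gamma$); this is exactly why $E(f)$ appears in $Z$. Your parenthetical ``unless the barrier sets coincide, i.e.\ $\hat g\in E(f)$'' occurs at a different point of the logic (inside the barrier analysis after $X_0\in\mathcal X$ has been fixed) and with the wrong conjugator, so it does not fill this hole. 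Relatedly, in your first case the maximality should be taken over all of $\mathbb X$, not only over $\mathcal X$: a hypothetical barrier whose set is $g$-invariant is not in $\mathcal X$ and escapes your ``contradicting maximality'' step; the paper avoids this by deriving a contradiction with the lower bound (\ref{Xgamma}) through a hexagon projection estimate (using lastness only to bound the contributions of the tails of $p$ and $q$ by $A$, plus Lemma \ref{rProjectionEQLem} for $\alpha$), which never needs $gX_\star\ne X_\star$.

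Second, your treatment of the crossing case ($y'$ preceding $x'$ along $\gamma$) is wrong: you claim this ``pins $d(o,\hat g o)$'' to a constant depending on $C,L,\tilde K$, so that only finitely many elements remain. What the estimates in that configuration actually give is $d(gx,y')\le 8C$ and $d(gy',x)\le 34C$, i.e.\ $g$ \emph{swaps} the two entry points $x$ and $y'$ up to bounded error; this is perfectly compatible with $d(x,y')$ being arbitrarily large (think of a torsion-like element rotating about a center far from $o$ but near the midpoint of $[o,go]$ --- only $g^2$, not $g$, has small displacement there). So $d(o,\hat g o)\approx d(y',x)$ is unbounded, and the case cannot be dismissed as finitely many short elements. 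The correct conclusion, as in the paper's Case 2, is that the (possibly long) segment $[y',x]_\gamma$ is again $(\epsilon,\tilde f)$-barrier-free: a hypothetical barrier set $X_\star$ is shown, via projection estimates at the two corner triangles, to have intersection $>A$ with both $p$ and $q$, whence $gX_\star=X_\star$ by Lemma \ref{sameLevel}, contradicting the standing assumption of that configuration. Without this argument (and without the $gX=X$ configuration above), the proposal does not yet prove the proposition; the remaining points (the ``WLOG replace $g$ by $g^{-1}$'' reduction, better handled by replacing $X$ with $g^{-1}X$ using $q=gp$, and the order in which $\tilde f$ and $\Delta$ are fixed) are repairable bookkeeping.
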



 

\begin{proof}
By   Lemma \ref{sameLevel}, we consider the set of $X\in \mathbb X$ such that $\diam{N_C(X)\cap p}>A$ and $\diam{N_C(X)\cap q}\le \tilde K$.  The proof shall treat two mutually exclusive configurations.

\textbf{Configuration I.} Assume that there exists a contracting set $X$ such that   $gX=X$. Let $x, y$ be the corresponding entry and exit points of $\gamma$ in $N_C(X)$. 

If $z, w$ denote the entry and exit points of $p$ in $N_C(X)$ respectively then so do $gz, gw$ for $q$ in $N_C(gX)$. By Lemma \ref{commonentry}, we have $d(z, x)\le 4C$ and $d(gz, y)\le 4C$, which implies  $$d(gx, y)\le 8C.$$

For concreteness, assume that $X=b\ax(f)$ for some $b\in G$. In addition, let $M$ be diameter of fundamental domain of the action of $E(f)$ on $\ax(f)$, so there exists $k'\in bE(f)$ such that $$d(k'o, x) \le M+C$$ and 
$$
\begin{array}{lll}
d(gk'o, y)&\le d(gk'o, gx)+d(gx, y)\\
&\le M+9C.
\end{array}
$$    

Since $b\cdot\ax(f)=gb\cdot \ax(f)$, it follows by   definition of $E(f)$ that $g\in bE(f)b^{-1}$. 
Thus, the element $\hat g:=k'^{-1}gk'$ lies in $E(f)\subset Z$ for the above $k'\in bE(f)$. 

Setting $\Delta:=M+5C$, $s=x$ and $t=y$ the desired points on $\gamma$,  the proof in Configuration I is finished.
\begin{figure}[htb] 
\centering \scalebox{0.8}{
\includegraphics{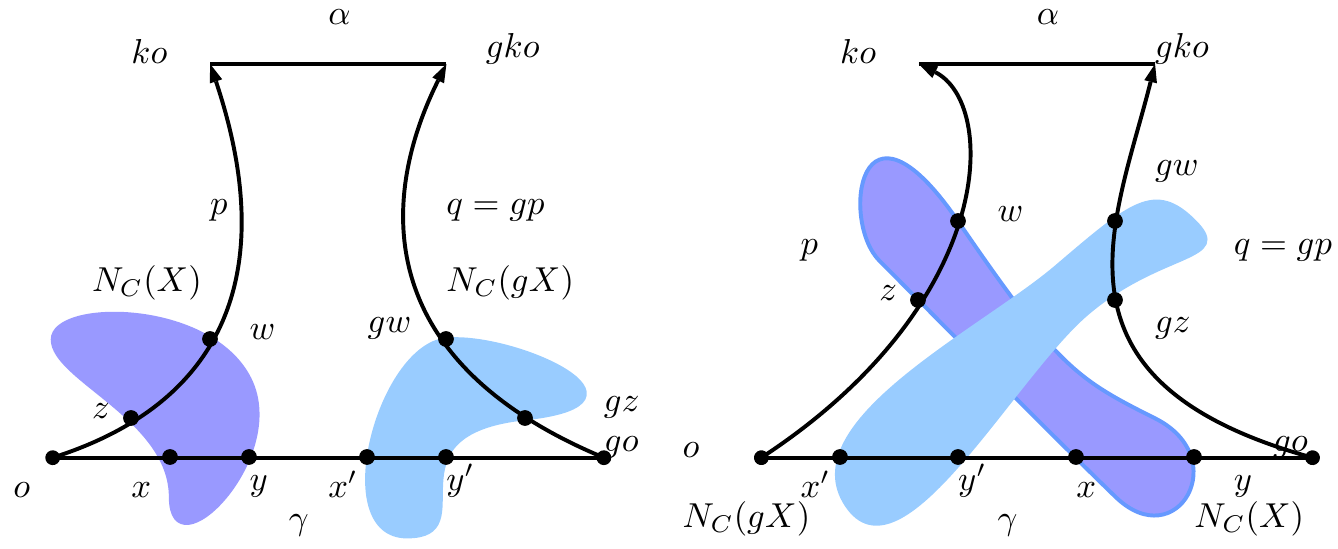} 
} \caption{Case 1 (left) and Case 2 (right)  in Proposition \ref{almostgeodform}}  \label{fig:thm68}
\end{figure}

\textbf{Configuration II.}  Let $X$ be a \textit{last} contracting set $X\in \mathbb X$  for $p$ with the following defining property: $$\diam{N_C(X)\cap p}> A$$ and $$ \diam{N_C(X)\cap [w, p_+]_p}\le A,$$ where $w$ is the exit point of $p$ in $N_C(X)$. By   the symmetry of $q=gp$, it then follows that $gX$ is the last for $q$ as well. Keep in mind that $gX\ne X$ through out the discussion of this Configuration. 

Denoting by $x', y'$ the entry and exit points repsectively in $N_C(gX)$, we are led to deal with the following two cases.

\textbf{Case 1.} $[x, y]_\gamma$ appears before $[x', y']_\gamma$. Noting   $\diam{N_C(X)\cap q}\le \tilde K$, we obtain $\proj_X(q)\le \tilde K+4C$ from Lemma \ref{pcapXbound}. Together, $\proj_X(\alpha)\le L+4C$ by Lemma \ref{rProjectionEQLem},  we see that
\begin{equation}\label{wydistEQ}
\begin{array}{ll}
d(w, y)&\le d(w, X)+ {\proj_X([w, p_+]_p)} +  {\proj_X(\alpha\cup q)}+ {\proj_X([y, \gamma_-]_\gamma})+d(y, X)\\
&\le 12C+L+\tilde K,
\end{array}
\end{equation}
where  ${\proj_X([w, p_+]_p)},  {\proj_X([y, \gamma_-]_\gamma})\le C$ by contracting property, and $d(w, X), d(y, X)\le C$. 

The same argument shows
\begin{equation}\label{gwxdistEQ}
d(gw, x')\le 12C+L+\tilde K.
\end{equation}

Combining (\ref{wydistEQ}) and (\ref{gwxdistEQ}), we thus obtain 
$$d(gy, x')\le d(gy, gw)+d(gw, x')\le 2(12C+L+\tilde K).$$

Again, the cocompact action of $E(f)$ on $\ax(f)$ provides an element $k'\in bE(f)$ such that 
$$
d(k'o, y) \le C+M
$$
and
$$
\begin{array}{lll}
d(gk'o, x')&\le d(gk'o, gy)+d(gy, x')\\
& \le 2(12C+L+\tilde K)+M+C.
\end{array}
$$

Denote $s=y, t=x'$. Setting $\hat g:=k'^{-1}gk'$ and $\Delta=2(13C+L+\tilde K)+M$,  it suffices to establish the following claim which then implies $\hat g\in \mathcal V_{\epsilon, \Delta, \tilde f}$, completing the proof of the Case 1.  
\begin{claim}
$[y, x']_\gamma$ is $(\epsilon, \tilde f)$-barrier-free.
\end{claim} 
\begin{proof}[Proof of the claim]
If not, then $[y, x']_\gamma$ contains an $(\epsilon, \tilde f)$-barrier $b_\star$  so for $X_\star:=b_\star\ax(f)$, we have 
\begin{equation}\label{uvcomplprojEQ}
\proj_{X_\star}(\gamma\setminus[u, v]_\gamma)\le 2C
\end{equation} by the contracting property, where $u, v$ are the entry and exit points  of $\gamma$ respectively in $N_C(X_\star)$. 

Denoting $p_1:=[w, p_+]_p, q_1:=[q_+,gw]_q$,  we deduce $$\max\{\diam{p_1\cap N_C(X_\star)}, \diam{q_1\cap N_C(X_\star)}\}\le A,$$ from the assumption that $X$ and $gX$ are last. This implies, by  Lemma \ref{pcapXbound}, that
\begin{equation}\label{p12projEQ}
\proj_X(p_1\cup p_2) \le 2A+8C.
\end{equation}

On the other hand,   applying Proposition \ref{Contractions}.5, we obtain from  (\ref{wydistEQ}) and (\ref{gwxdistEQ}) that  
\begin{equation}\label{twoprojEQ}
\max\{\proj_{X_\star}([w, y]), \;\proj_{X_\star}([gw, x'])\}\le 13C+L+\tilde K.
\end{equation}

Finally, consider the hexagon formed by $[y, w], p_1, \alpha, q_1,[gw, x']$ and $[y,x']_\gamma$. Recall that  $\proj_{X_\star}(\alpha)\le 4C+L$  by Lemma \ref{rProjectionEQLem}. Using the above estimates from (\ref{uvcomplprojEQ}), (\ref{p12projEQ}) and  (\ref{twoprojEQ}), we obtain
$$
\begin{array}{ll}
\diam{N_C(X_\star)\cap \gamma}&\le d(u, X_\star)+ \proj_{X_\star}(p_1\cup \alpha\cup q_1)+\proj_{X_\star}(\gamma\setminus[u, v]_\gamma)\\
&\;\;+\proj_{X_\star}([w, y])+\proj_{X_\star}([gw, x'])+d(v, X_\star)\\
&\le 50C+2A+2L+2\tilde K,
\end{array}
$$
a contradiction with the inequality (\ref{Xgamma}). The claim is thus proved. 
\end{proof}

\textbf{Case 2.} $[x', y']_\gamma$ appears before $[x, y]_\gamma$. We obtain $d(z, x)\le 4C$ and $d(gz, y')\le 4C$  by   Lemma \ref{commonentry} so 
\begin{equation} \label{gxydistEQ}
d(gx, y')\le 8C
\end{equation} 
which gives by Proposition \ref{Contractions}.5: \begin{equation} \label{gxyprojEQ}
\proj_X([gx, y'])\le 9C.
\end{equation}   

Let us look at two geodesics $[y', go]_\gamma$ and $[gx, go]_\gamma$ with at the same terminal point but their initial points   $8C$-apart. First of all, observe that $${N_C(X)\cap [gx, go]}\ne \emptyset.$$ Indeed, if not, we would obtain from the contracting property that $$\proj_X([gx, go])\le C,\; \proj_X([y', go]_\gamma\setminus [x, y]_\gamma)\le2C$$ and hence 
$$
\begin{array}{ll}
\diam{N_C(X)\cap \gamma} &\le \proj_X([gx, go])+\proj_X([gx, y'])+\proj_X([y', go]_\gamma\setminus [x, y]_\gamma)\\
&\;\; +d(x, X)+d(y, X)\\
&\le 14C,
\end{array}
$$
where $\proj_X([gx, y'])\le 9C$ by (\ref{gxyprojEQ}) and $d(x, X), d(y, X) \le C$.  However,  this contradicts to the conclusion of Lemma \ref{sameLevel}. Hence, ${N_C(X)\cap [gx, go]_\gamma}\ne \emptyset.$

So, consider  the entry point $u$ of $[gx, go]_\gamma$ in $N_C(X)$. We next need to determine the points $s, t$ on $\gamma$. By a projection argument, it follows that: 
$$
\begin{array}{ll}
d(x, u)&\le d(x, X)+\proj_X([y',x])+\proj_X([gx, y'])+\proj_X([gx, u])+d(u, X)\\
&\le 13C,
\end{array}
$$
where (\ref{gxyprojEQ}) is used and $\proj_X([gx, u]), \proj_X([y',x])\le C$ by contracting property. Since $d(gx, y')\le 8C$ in (\ref{gxydistEQ}) we have
$$d(gy', u)=|d(gx, u)-d(gx, gy')|\le d(gx, y')+d(x, u)\le 21C,$$ which in turn shows $$d(gy', x)\le d(gy', u)+d(u,x)\le  34C.$$
Denoting $s=y', t=x$ gives our desired points. As in Case (1), it remains to prove the following.   
\begin{claim}
$[y', x]_\gamma$ is  $(\epsilon, \tilde f)$-barrier-free as well.
\end{claim}
\begin{proof}[Proof of the claim]
Indeed, suppose to the contary that $b_\star$ is a barrier of $[y', x]_\gamma$ so for $X_\star:=b_\star\ax(f)$, the   inequality (\ref{Xgamma}) implies
\begin{equation} \label{XingammaEQ}
\diam{N_C(X_\star)\cap \gamma}>A+13C.
\end{equation}  Since $d(z,x) \le 4C$ by Lemma \ref{commonentry}, we get   $\proj_{X_\star}([z, x])\le 5C$ by Proposition \ref{Contractions}.5. 

Denote $\beta = [p_-, x]_p$. We   look at the triangle at left corner (cf. Figure \ref{fig:thm68}). We shall prove that $\diam{N_C(X_\star)\cap \beta}> A$. If  $\diam{N_C(X_\star)\cap\beta}\le A$ so $\proj_{X_\star}(\beta )\le A+4C$ by Lemma \ref{pcapXbound}, then 
 we obtain $$
 \begin{array}{ll}
\diam{N_C(X_\star)\cap \gamma}&\le \proj_{X_\star}(\beta)+\proj_{X_\star}([z, x])+ \proj_{X_\star}(\beta\setminus    N_C(X_\star))+2C\\
&\le A+13C
\end{array}
$$
where $\proj_{X_\star}(\beta\setminus    N_C(X_\star)) \le 2C$ by contracting property.  This contradicts to the above inequality (\ref{XingammaEQ}) so it follows as desired: $$\diam{N_C(X_\star)\cap p}>\diam{N_C(X_\star)\cap  \beta}> A.$$ 

Similarly, we obtain  
$\diam{N_C(X_\star)\cap q}>A>\tilde K$ by looking at   the triangle at right corner (cf. Figure \ref{fig:thm68}).  By Lemma \ref{sameLevel}, we must have $gX_\star= X_\star$. This contradicts to the assumption made in   Configuration II. Therefore, such a barrer $b_\star$ does not exist so the claim is established.
\end{proof}
 
With $s=x', t=x$, we can follow the same line in the Case (1) using the action of $E(f)$ on $\ax(f)$.  So the element $\hat g:=k'^{-1}gk'$ is $(\epsilon,  \Delta, \tilde f)$-barrier-free for $\Delta=M+35C$.  This  concludes the proof of Configuration II and thereby the proposition.
\end{proof}

\bibliographystyle{amsplain}
 \bibliography{bibliography}

\providecommand{\bysame}{\leavevmode\hbox to3em{\hrulefill}\thinspace}
\providecommand{\MR}{\relax\ifhmode\unskip\space\fi MR }
\providecommand{\MRhref}[2]{%
  \href{http://www.ams.org/mathscinet-getitem?mr=#1}{#2}
}
\providecommand{\href}[2]{#2}
\begin{thebibliography}{10}

\bibitem{ACTao}
G.~Arzhantseva, C.~Cashen, and J.~Tao, \emph{Growth tight actions}, Pacific
  Journal of Mathematics \textbf{278} (2015), 1--49.

\bibitem{ACGH}
N.~Arzhantseva, C.~Cashen, D.~Gruber, and D.~Hume, \emph{Contracting geodesics
  in infinitely presented graphical small cancellation groups},
  arXiv:1602.03767.

\bibitem{Ballmann}
W.~Ballmann, \emph{Lectures on spaces of nonpositive curvature},
  Birka$\ddot{u}$ser, Basel, 1995.

\bibitem{BehC}
J.~Behrstock and R.~Charney, \emph{Divergence and quasimorphisms of
  right-angled artin groups}, Mathematische Annalen \textbf{352} (2012),
  339--356.

\bibitem{BHS}
J.~Behrstock, M.~Hagen, and A.~Sisto, \emph{Thickness, relative hyperbolicity,
  and randomness in coxeter groups}, To appear in Algebr. Geom. Topol.

\bibitem{BBF}
M.~Bestvina, K.~Bromberg, and K.~Fujiwara, \emph{Constructing group actions on
  quasi-trees and applications to mapping class groups}, Publications
  math\'ematiques de l'IH\'ES \textbf{122} (2015), no.~1, 1--64,
  arXiv:1006.1939.

\bibitem{BF2}
M.~Bestvina and K.~Fujiwara, \emph{A characterization of higher rank symmetric
  spaces via bounded cohomology}, Geometric and Functional Analysis \textbf{19}
  (2009), no.~1, 11--40, arXiv:math/0702274.

\bibitem{Bow1}
B.~Bowditch, \emph{Relatively hyperbolic groups}, Int. J. Algebra Comput.
  (2012), no.~22, p1250016.

\bibitem{BriHae}
M.~Bridson and A.~Haefliger, \emph{Metric spaces of non-positive curvature},
  vol. 319, Grundlehren der Mathematischen Wissenschaften, Springer-Verlag,
  Berlin, 1999.

\bibitem{CaWi}
S.~Caruso and B.~Wiest, \emph{On the genericity of pseudo-{A}nosov braids {II}:
  Conjugations to rigid braids}, arXiv:1309.6137, 2013.

\bibitem{ChaSul}
R.~Charney and H.~Sultan, \emph{Contracting boundaries of {CAT}(0) spaces}, J
  Topology \textbf{8} (2015), no.~1, 93--117.

\bibitem{Cumplido}
M.~Cumplido, \emph{On the loxodromic actions of artin-tits groups},
  arXiv:1706.08377.

\bibitem{CWiest}
M.~Cumplido and B.~Wiest, \emph{A positive proportion of elements of mapping
  class groups is pseudo-anosov}, arXiv:1703.05044.

\bibitem{Dah}
F.~Dahmani, \emph{Combination of convergence groups}, Geom. Topol. \textbf{7}
  (2003), 933--963.

\bibitem{DGO}
F.~Dahmani, V.~Guirardel, and D.~Osin, \emph{Hyperbolically embedded subgroups
  and rotating families in groups acting on hyperbolic spaces},
  arXiv:1111.7048, to appear in Memoirs AMS, 2011.

\bibitem{DOP}
F.~Dal'bo, P.~Otal, and M.~Peign\'e, \emph{S\'eries de {P}oincar\'e des groupes
  g\'eom\'etriquement finis}, Israel Journal of math. \textbf{118} (2000),
  no.~3, 109--124.

\bibitem{DruSapir}
C.~Drutu and M.~Sapir, \emph{Tree-graded spaces and asymptotic cones of
  groups}, Topology \textbf{44} (2005), no.~5, 959--1058, With an appendix by
  D. Osin and M. Sapir.

\bibitem{EMR}
A.~Eskin, M.~Mirzakhani, and K.~Rafi, \emph{Counting geodesics in a stratum},
  to appear Inventiones Mathematicae, Arxiv: 1206.5574, 2012.

\bibitem{Farb}
B.~Farb, \emph{Relatively hyperbolic groups}, Geom. Funct. Anal. \textbf{8}
  (1998), no.~5, 810--840.

\bibitem{Farb2}
\bysame, \emph{Some problems on mapping class groups and moduli space}, Proc.
  Symp. Pure and Applied Math., ch.~Problems on Mapping Class Groups and
  Related Topics., Amer. Math. Soc., Providence, RI, 2006.

\bibitem{FarbMosher}
B.~Farb and L.~Mosher, \emph{Convex cocompact subgroups of mapping class
  groups}, Geom. Topol. \textbf{6} (2002), 91--152.

\bibitem{GTT}
I.~Gekhtman, S.~Taylor, and G.~Tiozzo, \emph{Counting loxodromics for
  hyperbolic actions}, arXiv:1605.02103, 2016.

\bibitem{Ge1}
V.~Gerasimov, \emph{Expansive convergence groups are relatively hyperbolic},
  Geom. Funct. Anal. \textbf{19} (2009), 137--169.

\bibitem{GePo2}
V.~Gerasimov and L.~Potyagailo, \emph{Quasi-isometries and {F}loyd boundaries
  of relatively hyperbolic groups}, J. Eur. Math. Soc. \textbf{15} (2013), 2115
  -- 2137.

\bibitem{GePo4}
\bysame, \emph{Quasiconvexity in the relatively hyperbolic groups}, Journal
  f\"{u}r die reine und angewandte Mathematik (Crelle Journal) (2015),
  arXiv:1103.1211.

\bibitem{GMM}
S.~Gou$\ddot{e}$zel, F.~Math\'eus, and F.~Maucourant, \emph{Entropy and drift
  in word hyperbolic groups}, Arxiv:1501.05082, 2015.

\bibitem{Gro}
M.~Gromov, \emph{Hyperbolic groups}, Essays in group theory (S Gersten,
  editor), vol.~1, pp.~75--263, Springer New York, 1987.

\bibitem{Gui}
Y.~Guivarc'h, \emph{Sur la loi des grands nombres et le rayon spectral d'une
  marche al\'eatoire}, Ast\'erisque (1979)~(French) Conference~on Random
  Walks~(Kleebach, ed.), vol.~74, Soc. Math. France, 1980, pp.~47--98.

\bibitem{Kneiper1}
G.~Kneiper, \emph{The uniqueness of the measure of maximal entropy for geodesic
  flows on rank 1 manifolds}, Annals of Mathematics \textbf{148} (1998), no.~1,
  291--314.

\bibitem{Maher}
J.~Maher, \emph{Asymptotics for pseudo-{A}nosov elements in {T}eichm\"{u}ller
  lattices}, Geom. Funct. Anal. \textbf{20} (2010), 527--544.

\bibitem{Maher2}
\bysame, \emph{Random walks on the mapping class group}, Duke Math. J.
  \textbf{156} (2011), no.~3, 429--468.

\bibitem{McPapa}
J.~McCarthy and A.~Papadopoulos, \emph{Dynamics on {T}hurston's sphere of
  projective measured foliations}, Comment. Math. Helvetici \textbf{66} (1989),
  133--166.

\bibitem{Minsky}
Y.~Minsky, \emph{Quasi-projections in {T}eichm\"{u}ller space}, J. Reine Angew.
  Math. \textbf{473} (1996), 121--136.

\bibitem{Osin}
D.~Osin, \emph{Relatively hyperbolic groups: intrinsic geometry, algebraic
  properties, and algorithmic problems}, vol. 179, Mem. Amer. Math. Soc., 2006.

\bibitem{PYANG}
L.~Potyagailo and W.~Yang, \emph{Hausdorff dimension of boundaries of
  relatively hyperbolic groups}, Preprint, 2016.

\bibitem{Roblin}
T.~Roblin, \emph{Ergodicit\'e et \'equidistribution en courbure n\'egative},
  no.~95, M\'emoires de la SMF, 2003.

\bibitem{Sisto}
A.~Sisto, \emph{Contracting elements and random walks}, arXiv:1112.2666, 2011.

\bibitem{Vershik}
A.~Vershik, \emph{Dynamic theory of growth in groups: entropy, boundaries,
  examples}, Uspekhi Mat. Nauk (2000), 59--128.

\bibitem{Wiest}
B.~Wiest, \emph{On the genericity of loxodromic actions}, arXiv:1406.7041v4,
  2015.

\bibitem{YANG10}
W.~Yang, \emph{Statistically convex-cocompact actions of groups with
  contracting elements}, arXiv:1612.03648.

\bibitem{YANG7}
\bysame, \emph{Patterson-{S}ullivan measures and growth of relatively
  hyperbolic groups}, Preprint, arXiv:1308.6326, 2013.

\bibitem{YANG6}
\bysame, \emph{Growth tightness for groups with contracting elements}, Math.
  Proc. Cambridge Philos. Soc \textbf{157} (2014), 297 -- 319.

\bibitem{YANG8}
\bysame, \emph{Purely exponential growth of cusp-uniform actions}, Preprint,
  Arxiv: 1602.07897, to appear in Ergodic theory and dynamical systems, 2016.

\end{thebibliography}

\end{document}